	\newcommand{\one}{\mathds{1}}
\numberwithin{equation}{section}
\newcommand{\eq}[1]{\begin{align*} #1 \end{align*}}
\newcommand{\eeq}[1]{\begin{align} \begin{split} #1 \end{split} \end{align}}
\newcommand{\stackref}[2]{\stackrel{\mbox{\footnotesize{\eqref{#1}}}}{#2}}
\newcommand{\stackrefp}[2]{\stackrel{\phantom{\mbox{\footnotesize{\eqref{#1}}}}}{#2}}
\newcommand{\G}{\mathbb{G}}
\renewcommand{\P}{\mathbb{P}}
\renewcommand{\AA}{\mathcal{A}}
\newcommand{\BB}{\mathcal{B}}
\newcommand{\EE}{\mathcal{E}}
\newcommand{\NN}{\mathcal{N}}
\newcommand{\PP}{\mathcal{P}}
\newcommand{\wt}[1]{\widetilde{#1}}
\newcommand{\givenp}[3][]{#1( #2 \: #1| \: #3 #1)} % with parentheses
\newcommand{\cc}{\mathrm{c}} % for set complements and "critical" subscripts
            \DeclareFontFamily{OMX}{MnSymbolE}{}
            \DeclareSymbolFont{MnLargeSymbols}{OMX}{MnSymbolE}{m}{n}
            \DeclareFontShape{OMX}{MnSymbolE}{m}{n}{
                <-6>  MnSymbolE5
               <6-7>  MnSymbolE6
               <7-8>  MnSymbolE7
               <8-9>  MnSymbolE8
               <9-10> MnSymbolE9
              <10-12> MnSymbolE10
              <12->   MnSymbolE12
            }{}
            \DeclareFontShape{OMX}{MnSymbolE}{b}{n}{
                <-6>  MnSymbolE-Bold5
               <6-7>  MnSymbolE-Bold6
               <7-8>  MnSymbolE-Bold7
               <8-9>  MnSymbolE-Bold8
               <9-10> MnSymbolE-Bold9
              <10-12> MnSymbolE-Bold10
              <12->   MnSymbolE-Bold12
            }{}
            \let\llangle\@undefined
            \let\rrangle\@undefined
            \DeclareMathDelimiter{\llangle}{\mathopen}%
                                 {MnLargeSymbols}{'164}{MnLargeSymbols}{'164}
            \DeclareMathDelimiter{\rrangle}{\mathclose}%
                                 {MnLargeSymbols}{'171}{MnLargeSymbols}{'171}
    \DeclareFontFamily{U}{matha}{\hyphenchar\font45}
    \DeclareFontShape{U}{matha}{m}{n}{ <-6> matha5 <6-7> matha6 <7-8>
    matha7 <8-9> matha8 <9-10> matha9 <10-12> matha10 <12-> matha12 }{}
    \DeclareSymbolFont{matha}{U}{matha}{m}{n}
    \DeclareFontFamily{U}{mathx}{\hyphenchar\font45}
    \DeclareFontShape{U}{mathx}{m}{n}{ <-6> mathx5 <6-7> mathx6 <7-8>
    mathx7 <8-9> mathx8 <9-10> mathx9 <10-12> mathx10 <12-> mathx12 }{}
    \DeclareSymbolFont{mathx}{U}{mathx}{m}{n}
    \DeclareMathDelimiter{\llbrack} {4}{matha}{"76}{mathx}{"30}
    \DeclareMathDelimiter{\rrbrack} {5}{matha}{"77}{mathx}{"38}
    \newcounter{manualsubequation}
    \renewcommand{\themanualsubequation}{\alph{manualsubequation}}
    \newcommand{\startsubequation}{%
      \setcounter{manualsubequation}{0}%
     ~\refstepcounter{equation}\ltx@label{manualsubeq\theequation}%
      \xdef\labelfor@subeq{manualsubeq\theequation}%
    }
    \newcommand{\tagsubequation}{%
      \stepcounter{manualsubequation}%
      \tag{\ref*{\labelfor@subeq}\themanualsubequation}%
    }
    \let\subequationlabel\ltx@label
\DeclareMathOperator{\cmp}{cmp}
\newtheorem{thm}{Theorem}[section]
\newtheorem{prop}[thm]{Proposition}
\newtheorem{cor}[thm]{Corollary}
\newtheorem{lemma}[thm]{Lemma}
\newtheorem{claim}[thm]{Claim}
\theoremstyle{definition}
\newtheorem{defn}[thm]{Definition}
\newtheorem{remark}[thm]{Remark}
\renewcommand{\thefootnote}{\fnsymbol{footnote}}
\title{Avoidance couplings on non-complete graphs}
\subjclass[2010]{60J10, % Markov chains
05C81, % Random walks on graphs
05C80} % random graphs
\keywords{Avoidance coupling, regular graph, square-free graph}
\author{Erik Bates}
\address{\newline Department of Mathematics \newline University of Wisconsin--Madison \newline Van Vleck Hall \newline 480 Lincoln Drive \newline Madison, WI 53706-1324 \newline United States \newline \textup{\tt ewbates@wisc.edu}}
\author{Moumanti Podder}
\address{\newline NYU-ECNU Institute of Mathematical Sciences \newline New York University Shanghai \newline 1555 Century Avenue \newline Pudong, Shanghai 2001220 \newline People's Republic of China \newline \textup{\tt mp3460@nyu.edu}}
\begin{document}
\bibliographystyle{acm}

% changes footnote labeling back to numbers
\renewcommand{\thefootnote}{\arabic{footnote}} \setcounter{footnote}{0}

%: ABSTRACT
\begin{abstract}
A coupling of random walkers on the same finite graph, who take turns sequentially, is said to be an {\it avoidance coupling} if the walkers never collide.
%An avoidance coupling is a collection of random walkers on the same finite graph, who take turns sequentially but never collide.
Previous studies of these processes have focused almost exclusively on complete graphs, in particular how many walkers an avoidance coupling can include.
For other graphs, apart from special cases, it has been unsettled whether even two non-colliding simple random walkers can be coupled.
In this article, we construct such a coupling on (i) any $d$-regular graph avoiding a fixed subgraph depending on $d$; and (ii) any square-free graph with minimum degree at least three.
A corollary of the first result is that a uniformly random regular graph on $n$ vertices admits an avoidance coupling with high probability.
\end{abstract}

\maketitle
%\tableofcontents

% uncomment to display line numbers
%\linenumbers

\section{Introduction}
An avoidance coupling is a type of stochastic process introduced by Angel, Holroyd, Martin, Wilson, and Winkler \cite{angel-holroyd-martin-wilson-winkler13}.
Namely, it is a collection of simple random walkers on the same fixed graph, who take turns moving---one at a time and in cyclical order---yet no two of which ever occupy the same vertex.
It is a non-trivial matter to construct such a process, for despite this avoidance restriction we demand that each walker individually maintains the law of simple random walk.

In a broad view, there are two considerations governing the possibility that a given graph admits an avoidance coupling: its combinatorial features and its geometric features.
The former pertains to how the steps of the walkers can be coordinated so that each walker's marginal is faithful to simple random walk.
Meanwhile, the latter deals with the limitations imposed on this coordination by the presence or lack of particular edges in the graph.

The literature has, so far, mostly approached questions regarding only combinatorics and not geometry.
Beginning with \cite{angel-holroyd-martin-wilson-winkler13}, attention has given primarily to avoidance couplings on complete graphs.
Since all vertices are connected to all other vertices, and thus no walker is ever in a distinct geometric scenario, this case might be regarded as the `mean-field' model of avoidance couplings.
As the number of vertices tends to infinity, this symmetry enables the coordination of many walkers.
For instance, paired with \cite[Theorem 6.1]{angel-holroyd-martin-wilson-winkler13}, an article of Feldheim \cite{feldheim17} showed that the complete graph on $n$ vertices can accommodate an avoidance coupling of at least $\lceil n/4 \rceil$ walkers.
It is plausible that a linear upper bound of the form $cn$ with $c<1$ also holds (see \cite[Section 9]{angel-holroyd-martin-wilson-winkler13}), although the best available bound is $\lceil n-\log n \rceil$ by Bates and Sauermann \cite{bates-sauermann19}.

The only previous work to have considered non-complete graphs is the thesis of Infeld \cite{infeld16}.
In \cite[Chapter 2]{infeld16}, the question of existence is posed for avoidance couplings (of two walkers) possessing both a certain Markovian property and a certain uniformity property in their transition rates (see \cite[Definition 2.1]{infeld16}).
It was shown that this exceptional type of avoidance coupling, called a `uniform avoidance coupling', can be constructed on several families of graphs, including: cycles, bipartite graphs with minimum degree at least two, and strongly regular graphs in a certain parameter range.
Interestingly, other regular graphs and trees were proven to \textit{not} admit a uniform avoidance coupling, on the basis of their geometry.

\subsection{Main results}
The present article seeks to expand inquiry into the effect of geometry on the existence of avoidance couplings.
Our first main result is the following; the subgraphs mentioned in the theorem statement can be seen in Figure~\ref{bad_subgraphs}.

\begin{thm} \label{main_thm}
Assume $G$ is a $d$-regular graph on $n$ vertices, with $n\geq5$.

\begin{enumerate}[label=\textup{(\alph*)}]

\item \label{main_thm_a}
If $d\geq4$ and $G$ does not contain $H_d$ as a subgraph, then there exists an avoidance coupling of two walkers on $G$.

\item \label{main_thm_b}
If $d=3$ and $G$ does not contain $\wt H_3$ as a subgraph, then there exists an avoidance coupling of two walkers on $G$.

%\item \label{main_thm_c} If $d=2$, then there exists an avoidance coupling of $\lfloor n/2\rfloor$ walkers on $G$.
\end{enumerate}
\end{thm}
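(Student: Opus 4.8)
The plan is to realize the coupling as a joint process on the set $D$ of ordered pairs of distinct vertices, driven by the fixed turn order ($A$ moves, then $B$, repeatedly), and to check its two defining properties separately. Collision-freeness will be built into the support of every move, so the real content is that each walker's marginal is exactly simple random walk. First I would record the reduction that makes this verifiable: it suffices to produce, for each walker, a realization in which --- conditioned only on that walker's own past positions --- the next step is uniform over the current neighborhood. I would therefore organize the construction so that the marginal-SRW requirement becomes a collection of local uniformity identities, one at each vertex, and the remaining task is purely to design the joint moves that satisfy them without collisions.

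The core idea is to couple the two walkers' moves \emph{in parallel}. At a round starting from $(x,y)$, I sample $x'\sim x$ and $y'\sim y$ from a coupling of two uniform neighbor-choices whose support avoids both the intermediate collision $x'=y$ and the final collision $y'=x'$. When $x$ and $y$ are non-adjacent this is automatic: any neighbor of $x$ differs from $y$, and $B$ can then avoid the single vertex $x'$. Hence the only dangerous event is that the walkers are adjacent, $x\sim y$, and the parallel choice asks $A$ to step onto $y$ --- equivalently, the two walkers wish to exchange across the edge $xy$. Because turns are sequential, $A$ cannot occupy $y$ while $B$ is still there, yet $A$'s marginal demands that it step to $y$ with probability $1/d$. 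The heart of the construction is therefore a local gadget that realizes this blocked move through a short detour using an auxiliary vertex near $xy$, spread over a bounded number of turns.

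The geometry enters precisely in the availability of that detour. I would show that an admissible auxiliary vertex exists unless the neighborhoods of $x$ and $y$ are \emph{saturated} --- every candidate rerouting vertex is itself blocked --- which forces many short cycles through the edge $xy$. Classifying exactly which local configurations admit no detour is where the forbidden subgraphs come from: $H_d$ for $d\geq4$ and the exceptional $\wt H_3$ for $d=3$ should be the minimal saturated neighborhoods, dense in $4$-cycles (consistent with the square-free hypothesis of the companion result, which forbids such cycles outright). The combinatorial lemma to prove is then: if $G$ is $d$-regular and contains no copy of the relevant bad subgraph, then around every edge there is an admissible detour vertex. The degree-three case is separated because degree three leaves the least room to maneuver, so its obstruction is a larger and more rigid gadget $\wt H_3$; the hypothesis $n\geq5$ should enter to discard the few tiny graphs where even the generic parallel moves degenerate.

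Finally I would assemble the global coupling from the generic parallel moves together with the detour gadget at every dangerous configuration, and verify the two marginals using the local uniformity identities from the first step. I expect the main obstacle to be exactly this marginal bookkeeping: the detour must return the rerouted probability mass to the target $y$ with precisely the frequency demanded by simple random walk, simultaneously for both walkers and without ever creating a collision during the extra turns it consumes. Making the detour \emph{debt-free} in this sense --- so that, conditioned on either walker's own history, every step stays uniform --- is the delicate analytic part, while the combinatorial classification yielding $H_d$ and $\wt H_3$ is the second substantial ingredient; together they give existence of the coupling.
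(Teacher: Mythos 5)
There is a genuine gap, and it sits exactly where you declare the situation ``automatic.'' You claim that when the walkers are non-adjacent, a parallel one-step coupling avoiding collisions is free of charge: Alice's step cannot land on Bob, and Bob ``can then avoid the single vertex $x'$.'' But Bob's marginal must remain uniform on $N(y)$, so he cannot simply avoid $x'$; you need a joint law on $N(x)\times N(y)$ with \emph{both} marginals uniform whose support avoids collisions \emph{and} leaves the pair in a configuration from which the next round can again be played. That is a Hall-type matching condition on a bipartite compatibility graph between $N(x)$ and $N(y)$, and it genuinely fails for configurations permitted by the hypotheses. The sharpest example is the local $K_{2,2}$ configuration in a $3$-regular graph (Scenario~\hyperref[case_6]{6} of the paper): $a$ and $b$ are non-adjacent, $a_1,a_2\in N(a)$ are each adjacent to $b_1,b_2\in N(b)$, and no one-step coupling with uniform marginals can keep the walkers out of each other's way for the following round --- Alice enters $\{a_1,a_2\}$ with probability $2/3$ while Bob's only safe response $c_b$ has probability $1/3$. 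This configuration is \emph{not} excluded by $\wt H_3$ (nor by $H_d$), so your combinatorial lemma ``no bad subgraph $\Rightarrow$ a detour always exists'' cannot classify it away; the paper instead resolves it with a coupled excursion of \emph{random, unbounded} length $T$ inside the $K_{2,2}$, contradicting your ``bounded number of turns.'' Relatedly, your guessed characterization of the forbidden subgraphs as ``minimal saturated neighborhoods dense in $4$-cycles'' does not match their actual role: $H_d$-freeness is equivalent to no two vertices having identical closed neighborhoods (Lemma~\ref{bad_subgraph_restated}), and it is invoked to verify a quantitative Hall condition (Lemma~\ref{hall_prep_1}), not to rule out $4$-cycles, which remain present and must be handled.

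The second gap is that the entire analytic content --- the ``debt-free'' detour with uniform marginals for an adjacent pair attempting to exchange --- is named but not constructed, and the paper's two proofs suggest it is not the right primitive. For $d=3$ the paper never lets the walkers become adjacent at the start of a round: it maintains graph distance at least $2$ as an inductive invariant (Proposition~\ref{conditions_to_coupling}) and performs a finite case analysis of the local geometry, so the exchange problem never arises. For $d\geq4$ adjacency is permitted but tamed by an ``excluded vertex'' $E_{t+1}$ carried as extra state: Alice commits to two steps, Bob to one step plus his next excluded vertex, and a perfect matching (via Hall's theorem, using the $\frac{d}{d-1}$ expansion bound of Lemma~\ref{hall_prep_1}) between these two-coordinate choices is what makes every marginal exactly uniform. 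Without either an invariant that forbids adjacency or an auxiliary state variable of this kind, your round-by-round parallel scheme has no mechanism to repay the probability mass that the blocked exchange move removes, and no argument is offered that one exists. As written, the proposal identifies the correct difficulties but supplies neither of the two ingredients that constitute the proof.
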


\begin{figure}
\centering
\subfloat[$H_d$]{\label{bad_d}
\includegraphics[width=0.5\textwidth]{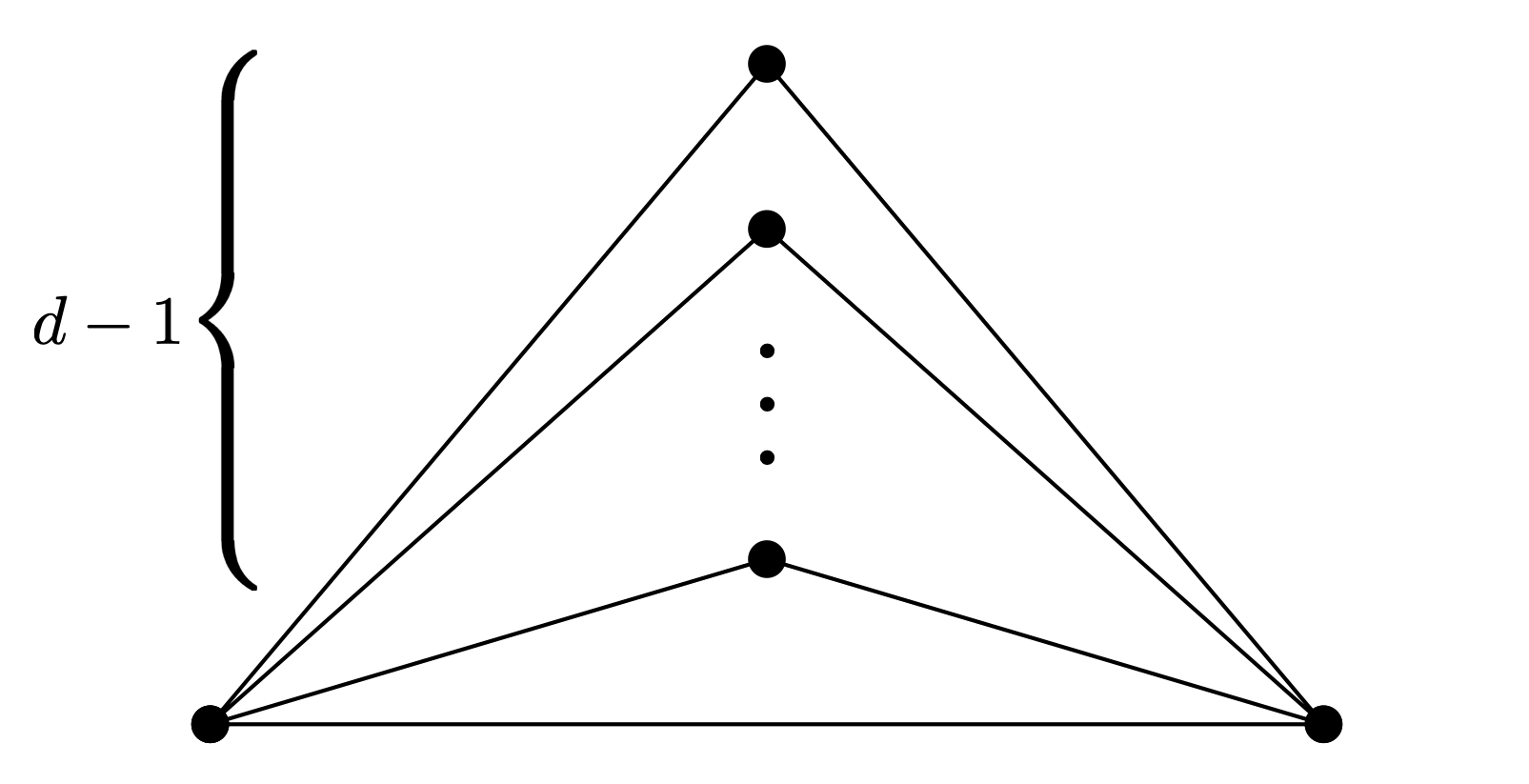}
}
\hspace{0.5in}
\subfloat[$\wt H_3$]{ \label{bad_3}
\includegraphics[width=0.3\textwidth]{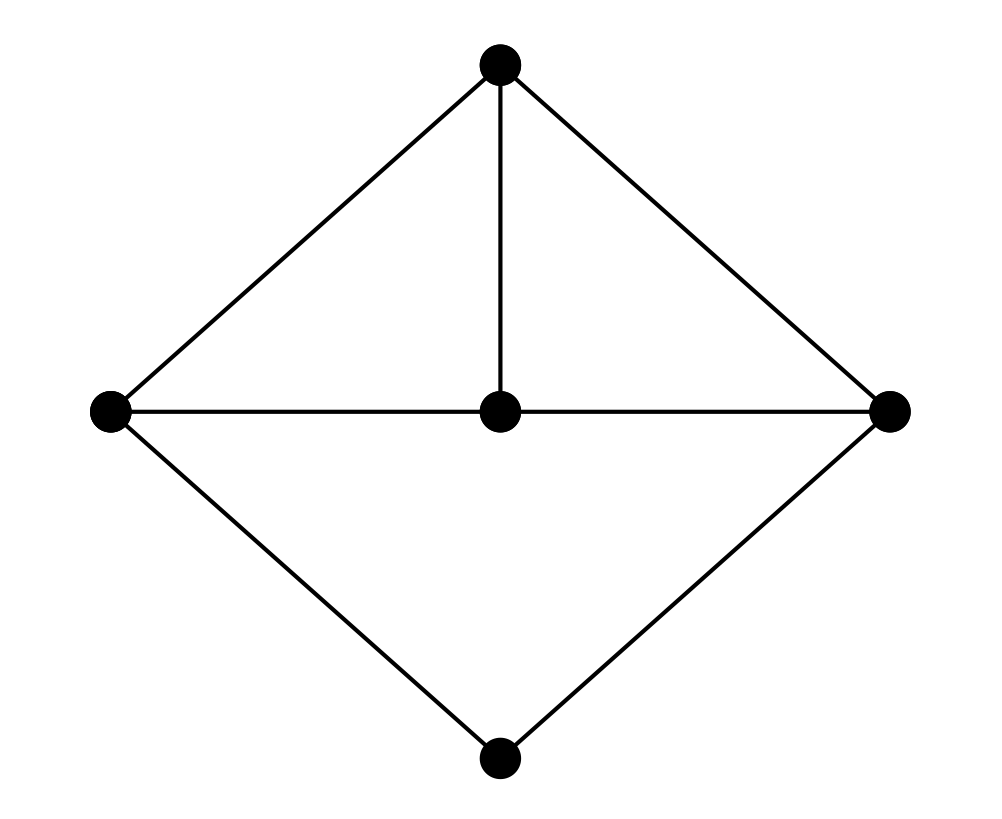}
}
\caption{Subgraphs to be avoided for Theorems~\ref{main_thm}(a) and~\ref{main_thm}(b) to apply}
\label{bad_subgraphs}
\end{figure}

We remark that part \ref{main_thm_b} is slightly stronger than part \ref{main_thm_a}, since $\wt H_3$ contains $H_3$ as a subgraph.
These two results are proved in Sections~\ref{3_regular} and~\ref{d_regular}, respectively.
While we are careful to verify the delicate technical details, high-level summaries of our constructions can be read in Sections~\ref{3_outline} and~\ref{construction_outline}.
%On the other hand, part (c) is trivial and not new; we have included it (and its proof) in Section~\ref{2_regular} for completeness.
While it would be interesting to know if the subgraph restrictions in Theorem~\ref{main_thm} can be relaxed, they are sufficiently loose to give the following corollary, proved in Appendix~\ref{appendix}.

\begin{cor} \label{main_cor}
Let $G_n$ be a random connected $d$-regular graph, uniform among those on $n$ vertices.
Then
\eq{
\lim_{n\to\infty}\P(\text{$G_n$ admits an avoidance coupling of two walkers}) = 1.
}
\end{cor}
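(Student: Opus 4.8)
The plan is to combine Theorem~\ref{main_thm} with a first-moment estimate in the configuration model. By Theorem~\ref{main_thm}, a connected $d$-regular graph $G$ on $n\ge 5$ vertices admits an avoidance coupling of two walkers as soon as $G$ contains no copy of the relevant forbidden subgraph $F_d$, where $F_d = H_d$ for $d\ge 4$ and $F_3 = \wt H_3$. It therefore suffices to establish
\[
\lim_{n\to\infty}\P\bigl(F_d \text{ is a subgraph of } G_n\bigr) = 0.
\]
Since $d\ge 3$ in both cases, a uniform random $d$-regular graph is connected with probability tending to $1$, so the conditioning on connectedness is harmless: writing $U_n$ for the uniform (not necessarily connected) $d$-regular graph and using $G_n \overset{d}{=} (U_n \mid U_n \text{ connected})$, one has $\P(F_d\subseteq G_n) \le \P(F_d\subseteq U_n)/\P(U_n \text{ connected})$, and it is enough to control the numerator.

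First I would pass to the configuration (pairing) model $G^*(n,d)$, in which each of the $n$ vertices carries $d$ half-edges and the $dn$ half-edges are matched uniformly at random. By Bollob\'as's classical result, $\P\bigl(G^*(n,d) \text{ is simple}\bigr)$ converges to a positive constant depending only on $d$, and conditioned on simplicity $G^*(n,d)$ is distributed exactly as the uniform simple $d$-regular graph. Hence for any event $A$ one has $\P_{\text{unif}}(A)\le \P_{G^*}(A)/\P_{G^*}(\text{simple})$, and since the denominator is bounded away from zero it reduces everything to showing $\P_{G^*}\bigl(F_d\subseteq G^*(n,d)\bigr)\to 0$.

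The core computation is then the standard expected-subgraph-count estimate in the pairing model: for a fixed connected graph $H$ with $v$ vertices and $e$ edges, the expected number of copies of $H$ in $G^*(n,d)$ is asymptotic to a constant times $n^{v-e}$, obtained by summing over the $\sim n^{v}$ injective placements of $V(H)$ into $[n]$, the $O(1)$ choices of half-edges at each placed vertex (a product of falling factorials in $d$), and the $\sim (dn)^{-e}$ probability that the $e$ prescribed half-edge pairs are all matched. The key structural input, to be read off directly from Figure~\ref{bad_subgraphs}, is that each forbidden graph $F_d$ is connected and contains at least two independent cycles, so that $e(F_d) \ge v(F_d) + 1$. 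This forces $\E_{G^*}[\#\text{copies of }F_d] = O(n^{-1})$, whence Markov's inequality yields $\P_{G^*}(F_d\subseteq G^*(n,d))\to 0$ and completes the proof.

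I expect the main, and essentially the only non-routine, obstacle to be the structural verification that $H_d$ and $\wt H_3$ each have strictly more edges than vertices. This excess-at-least-one property is precisely what makes the forbidden configurations vanish with high probability, and it is indispensable: a unicyclic forbidden subgraph would instead have an expected count of order $n^{0}$ and a Poisson limiting law, surviving with positive limiting probability and causing the argument to break down. Everything else---the contiguity between the pairing and uniform models, the reduction removing the connectedness conditioning, and the first-moment bound itself---is classical and requires only careful bookkeeping of the combinatorial constants.
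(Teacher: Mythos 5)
Your proposal is correct and follows essentially the same route as the paper's appendix: reduce to the unconditioned uniform $d$-regular model via asymptotic connectivity, pass to the configuration model via contiguity on the simplicity event, and eliminate the forbidden subgraphs $H_d$ and $\wt H_3$ with a first-moment bound that works precisely because each has strictly more edges than vertices. The paper phrases the final step as a union bound over the $\binom{n}{n_0}$ placements with sequential conditional edge-probability estimates rather than as an expected-copy-count asymptotic plus Markov, but this is the same calculation.
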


This corollary is noteworthy for at least two reasons: 
(i) it is the first result concerning avoidance couplings on \textit{random} graphs; and (ii) a well-known fact is that random regular graphs are, with high probability, expander graphs.
In turn, expanders are frequently used in computer science applications, which serve as motivation for studying avoidance couplings in the first place (see Section~\ref{background}).
While it is true that these applications often ask for a \textit{non}-random expander graph, Corollary~\ref{main_cor} provides a basis for what can be expected of a well-constructed deterministic one.

\begin{remark}
%\section{Proof of Theorem~\hyperref[main_thm_a]{\ref*{main_thm}\ref*{main_thm_a}}: $2$-regular graphs} \label{2_regular}
When $d=2$, there is only one possible graph: $G$ is the cycle $C_n$ on $n$ vertices.
For this special case, it is easy to see that not just two walkers, but $k$ walkers for any $k\leq n/2$, can be avoidance coupled on $C_n$.
Indeed, label the vertices $0,1,\dots,n-1$ in the natural order, and assume $k$ walkers are initialized at positions $0,2,\dots,2(k-1)$.
In each unit of time, all walkers move in the same direction, either $+1$ or $-1$ modulo $n$, each case occurring with probability $1/2$ (independently between rounds).
It is then clear that each walker performs simple random walk on the cycle, and that the distance between any two walkers remains the same at the end of each time step.
Since these pairwise distances are all at least $2$, no two walkers will ever intersect.
\end{remark}

The second main result extends our constructions beyond regular graphs by capitalizing on the some of the methods developed for Theorem~\ref{main_thm}.
The proof appears in Section~\ref{square_free}, including a summary provided in Section~\ref{construction_square_free}.
Recall that a graph $G = (V,E)$ is \textit{square-free} if  whenever $v_1,v_2,v_3,v_4\in V$ are distinct, at least one of the edges $\{v_1,v_2\},\{v_2,v_3\},\{v_3,v_4\}$, $\{v_4,v_1\}$ is not found in $E$.

\begin{thm} \label{square_free_thm}
If $G$ is a square-free graph with minimum degree at least $3$, then there exists an avoidance coupling of two walkers on $G$.
\end{thm}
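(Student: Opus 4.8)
The plan is to reduce the whole construction to a single elementary step and to use the square-free hypothesis to guarantee that this step is always executable. As in the regular case treated for Theorem~\ref{main_thm}, I would describe the coupling as a Markov chain on ordered pairs $(a,b)$ of \emph{distinct} vertices, advanced one \emph{round} at a time, where a round consists of walker~A moving from $a$ to a neighbor $a'$ followed by walker~B moving from $b$ to a neighbor $b'$. The single invariant I would maintain is that at the start of every round the walkers lie at distance at least two, i.e.\ $\dist(a,b)\ge2$. This invariant is exactly what makes a round executable without an intermediate collision: since $b\notin N(a)$, A's move to any $a'\in N(a)$ automatically satisfies $a'\ne b$, so the only collision left to rule out is $a'=b'$ after B moves.

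The decisive point is that a round can be realized by one \emph{joint} coupling $M_{(a,b)}$ of the two uniform laws $\mathrm{Unif}(N(a))$ and $\mathrm{Unif}(N(b))$ whose support avoids the ``bad'' set
\[
\mathcal B=\{(x,y)\in N(a)\times N(b): x=y \text{ or } x\sim y\}.
\]
Avoiding $\mathcal B$ simultaneously prevents the end-of-round collision $a'=b'$ and forces the new pair into the good set, so $\dist(a',b')\ge2$ and the invariant is restored. Crucially, the simple-random-walk marginals then come for free: because $M_{(a,b)}$ has A-marginal exactly $\mathrm{Unif}(N(a))$ \emph{for every} fixed $b$, conditioning on walker A's own history (under which $b$ is random) and averaging leaves A's step uniform on $N(a)$, and symmetrically for~B. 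Thus no delicate marginal bookkeeping is required beyond insisting that each $M_{(a,b)}$ carry the two correct margins.

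The entire difficulty therefore concentrates in proving that a margin-respecting coupling avoiding $\mathcal B$ exists, and this is precisely where square-freeness enters. The structural lemma I would isolate is that in a square-free graph any two vertices have \emph{at most one} common neighbor, since two common neighbors of $u,v$ would close a $4$-cycle $u,c_1,v,c_2$. Two consequences follow: the diagonal part $\{x=y\}$ of $\mathcal B$ has at most one element (the unique common neighbor of $a,b$, present only when $\dist(a,b)=2$); and the adjacency part $\{x\sim y\}$ forms a \emph{partial matching} between $N(a)$ and $N(b)$, as a vertex $x\in N(a)$ adjacent to two vertices $y_1,y_2\in N(b)$ would make $y_1,y_2$ common neighbors of $x$ and $b$. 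Hence $\mathcal B$ is contained in a matching together with at most one extra cell, so every sink $y\in N(b)$ is forbidden for at most two sources $x\in N(a)$, and symmetrically.

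With this sparsity in hand, existence reduces to a transportation-feasibility check, and this is the step I expect to be the main obstacle. Viewing the rows $N(a)$ as supplies $1/\!\deg(a)$ and the columns $N(b)$ as demands $1/\!\deg(b)$ with forbidden cells $\mathcal B$, I would invoke a Hall-type feasibility criterion (equivalently max-flow--min-cut) and verify $|\Gamma(S)|\ge |S|\deg(b)/\!\deg(a)$ for every $S\subseteq N(a)$, where $\Gamma(S)$ is the set of sinks allowed for some $x\in S$. Because each sink is blocked for at most two sources, one gets $|\Gamma(S)|\ge\deg(b)-2$ when $|S|=1$, $|\Gamma(S)|\ge\deg(b)-1$ when $|S|=2$, and $\Gamma(S)=N(b)$ once $|S|\ge3$; a short computation shows each inequality holds exactly under $\deg(a),\deg(b)\ge3$, which is supplied by the minimum-degree hypothesis. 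The fiddly part here is the small-case interplay between the matched cell and the diagonal cell, which is where the bound ``at most two'' is sharp; this feasibility verification is what mirrors, and cleanly replaces, the role played by forbidding $H_d$ in the regular setting. The remaining items are routine: a component of a square-free graph with minimum degree at least three cannot be complete (as $K_m$ contains $C_4$ for $m\ge4$), so a pair at distance $\ge2$ exists to initialize the chain inside one component, and the alternation A,~B,~A,~B matches the required cyclical order.
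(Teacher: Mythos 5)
Your proposal is correct and follows the same architecture as the paper's proof: one-step local couplings that preserve the invariant $\dist(A_t,B_t)\ge 2$, the square-free structural facts (at most one common neighbor, hence the forbidden adjacencies between $N(a)$ and $N(b)$ form a partial matching and the diagonal contributes at most one cell), and a Hall-type feasibility argument to produce a joint law with uniform marginals supported off the bad set; the marginal bookkeeping via ``the $A$-marginal is uniform for every fixed $b$, so averaging over $b$ preserves faithfulness'' is exactly the paper's Proposition~\ref{conditions_to_coupling}. Where you genuinely diverge is in the verification of the Hall condition $|\Gamma(S)|\ge(\ell/k)|S|$: the paper proves its Lemma~\ref{hall_prep_2} by a greedy enumeration of $N(a)$ and $N(b)$ followed by a five-way case analysis on whether the common neighbor $c$ exists and how it attaches to the two neighborhoods, whereas you extract the single uniform observation that each sink is forbidden for at most two sources (one via the matching, one via the diagonal, and these interact only through the unique common neighbor), which immediately yields $|\Gamma(S)|\ge \ell-2,\ \ell-1,\ \ell$ for $|S|=1,2,\ge 3$ and closes the computation under the hypothesis $k,\ell\ge 3$. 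This is a cleaner and shorter route to the same lemma; the paper's blown-up bipartite multigraph plus Hall's marriage theorem and your transportation/max-flow formulation are equivalent ways of converting the feasibility bound into the coupling $m(i,j)$. Your added remark that a component of a square-free graph with minimum degree $3$ is not complete (so an initial pair at distance $\ge 2$ exists) fills an initialization detail the paper leaves implicit in this section.
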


\subsection{Applications and related literature} \label{background}

Coupling of Markov chains has been a central tool in probability for many years.
Perhaps the most well-known modern example is in the study of mixing times \cite{levin-peres-wilmer09,levin-peres17}, although in that setting the desired coupling is one bringing different trajectories together.
Namely, a coupling achieving coincidence quickly can provide a theoretical guarantee of rapid mixing, information much wanted by practitioners of Markov chain Monte Carlo, for example.
In contrast, here we seek to keep two random walkers apart, a scenario that is also rooted in application, for instance within computer science, communication theory, and polling design (see \cite[Section 2.2]{bates-sauermann19} and references therein).
Also appearing at the interface of computer science and probability is the related class of `scheduling problems' \cite{coppersmith-tetali-winkler93,tetali-winkler93,gacs11,basu-sidoravicius-sly19}, which possess rich structure connected to dependent percolation models \cite{balister-bollobas-stacey00,winkler00,moseman-winkler08,pete08,brightwell-winkler09,gacs10,grimmett10}.

Given the applications of avoidance couplings, it is worth mentioning that all constructions in this paper are completely local in nature.
That is, the trajectories of the walkers only become dependent when the walkers are within a certain fixed distance of each other.
Therefore, the computational burden of the avoidance protocols is very small and thus conducive to real-world implementation.

\subsection{Notation}
Before proving the main results, let us set some notation that will be used throughout the paper.
We will always write $G = (V,E)$ to denote a finite simple graph.
For each vertex $v\in V$, we denote the neighborhood of $v$ by
\eq{
N(v) \coloneqq \{u \in V : \{u,v\}\in E\}.
}
The degree of a vertex will be written $\deg(v) \coloneqq |N(v)|$.
Recall that a (discrete-time) simple random walk on $G$ is a $V$-valued Markov chain $(X_t)_{t\geq0}$ such that
\eq{
\P\givenp{X_{t+1}=u}{X_t=v} = 
\begin{cases} 1/\deg(v) &\text{if }u\in N(v), \\
0 &\text{otherwise}.
\end{cases}
}
We can now make precise the notion of an avoidance coupling, hereafter of two walkers unless otherwise noted.

\begin{defn} \label{ac_def}
An \textit{avoidance coupling} on $G$ is a discrete-time ($V\times V$)-valued process $(A_t,B_t)_{t\geq0}$ such that
\begin{enumerate}[label=\textup{\arabic*.}]
\item (\textit{faithfulness}) $(A_t)_{t\geq0}$ and $(B_t)_{t\geq0}$ are each simple random walks on $G$; and
\item (\textit{avoidance}) $B_t\notin\{A_t,A_{t+1}\}$ for all $t\geq0$.
\end{enumerate}
\end{defn}

For the curious reader, we mention that continuous-time avoidance couplings do not exist on connected graphs, by \cite[Theorem 2.1]{angel-holroyd-martin-wilson-winkler13}.
Moreover, two walkers initialized in different components of a graph trivially avoid one another at all times.
Therefore, we assume henceforth that $G$ is connected.
Also, for brevity, we will occasionally write $A_{[0,t]}$ to denote the vector $(A_0,A_1,\dots,A_t)$.

%\section{Proof of Theorem~\hyperref[main_thm_a]{\ref*{main_thm}\ref*{main_thm_a}}: $2$-regular graphs} \label{2_regular}
%The special case $d=2$ has only one possibility: $G$ is the cycle $C_n$, where $n=|V|$.
%
%\begin{prop}
%There exists an avoidance coupling of $k$ walkers on $C_{n}$ provided $n\geq2k$.
%\end{prop}
%
%\begin{proof}
%Label the vertices $0,1,\dots,n-1$ in the natural order, and assume $k$ walkers are initialized at positions $0,2,\dots,2(k-1)$.
%In each unit of time, all walkers move in the same direction, either $+1$ or $-1$ modulo $n$, each case occurring with probability $1/2$ (independently between rounds).
%It is then clear that each walker performs simple random walk on the cycle, and that the distance between any two walkers remains the same at the end of each time step.
%Since these pairwise distances are all at least $2$, no two walkers will ever intersect.
%\end{proof}

\section{Proof of Theorem~\hyperref[main_thm_b]{\ref*{main_thm}\ref*{main_thm_b}}: $3$-regular graphs} \label{3_regular}

\setcounter{subsection}{-1}
\subsection{Outline of coupling} \label{3_outline}

Absent the special symmetry of the $2$-regular case, avoidance couplings on $3$-regular graphs must accommodate many more situations in which collision could occur.
The strategy of this section is to group these situations into finitely many cases, and then address each case separately.
Once this is done, an avoidance coupling $(A_t,B_t)_{t\geq0}$ is produced inductively as follows.
In the spirit of \cite{angel-holroyd-martin-wilson-winkler13}, our two walkers shall be named Alice and Bob.

\begin{itemize}
\item At time $t\geq0$, suppose Alice is at vertex $A_t = a$ and is next to move.
\item Assume Bob is at vertex $B_t = b$, which is neither equal to $a$ nor a neighbor of $a$.
%:
\item Depending on the graph's local structure around $a$ and $b$, the two walkers agree to jointly specify their next $T\geq1$ steps. %, independent of their joint history. %given the respective positions at $a$ and $b$.
Here $T$ may be random, and the two walkers' coordination will be such that:
\begin{enumerate}[label=\textup{(\roman*)}]
\item \label{condition_1}
each step is uniformly random, meaning that for each $s \in\{0,1,\dots, T-1\}$,
\begin{subequations}
\label{srw_condition}
\begin{align}
\label{srw_condition_a}
\P\givenp{A_{t+s+1}=a'}{A_{[0,t+s]}} &= \frac{1}{|N(A_{t+s})|}\one_{\{a'\in N(A_{t+s})\}}, \\
\label{srw_condition_b}
\P\givenp{B_{t+s+1}=b'}{B_{[0,t+s]}} &= \frac{1}{|N(B_{t+s})|}\one_{\{b'\in N(B_{t+s})\}};
\end{align}
\end{subequations}
\item \label{condition_2} 
they never collide during these $T$ steps, meaning
\eq{
B_{t+s} \notin \{A_{t+s},A_{t+s+1}\}, \quad 0 \leq s \leq T-1 \qquad \text{and} \qquad
B_{t+T} \neq A_{t+T};
}
\item \label{condition_3}
Bob is not adjacent to Alice after $T$ steps, meaning $B_{t+T} \notin N(A_{t+T})$.
\end{enumerate}
\item Because of this last condition \hyperref[condition_3]{(iii)}, the routine can be executed \textit{ad infinitum}. 
Each iteration is independent of previous iterations, given Alice's and Bob's current locations.
\end{itemize}

\begin{prop} \label{conditions_to_coupling}
If the graph distance between $A_0$ and $B_0$ is at least $2$, then any process $(A_t,B_t)_{t\geq0}$ defined as above is an avoidance coupling on $G$.
\end{prop}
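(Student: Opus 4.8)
The plan is to verify the two defining properties of Definition~\ref{ac_def}—faithfulness and avoidance—for the inductively built process, treating the construction as a concatenation of independently sampled ``blocks.'' First I would make the block structure precise: set $\tau_0 = 0$ and $\tau_{k+1} = \tau_k + T_k$, where $T_k \geq 1$ is the (possibly random) number of steps jointly specified at the $k$-th iteration starting from the configuration $(A_{\tau_k}, B_{\tau_k})$. The hypothesis $\dist(A_0,B_0) \geq 2$ means $B_0 \notin \{A_0\} \cup N(A_0)$, which is exactly the admissibility condition the construction requires in order to begin; and condition~\hyperref[condition_3]{(iii)} guarantees $B_{\tau_{k+1}} \notin \{A_{\tau_{k+1}}\} \cup N(A_{\tau_{k+1}})$, so that the configuration at the start of each new block is again admissible. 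Hence the iteration is well defined for every $k$, and the times $\tau_k$ partition $[0,\infty)$ into the blocks $[\tau_k, \tau_{k+1})$, which makes $(A_t,B_t)$ defined for all $t \geq 0$.

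Avoidance is then immediate. Every $t \geq 0$ can be written uniquely as $t = \tau_k + s$ with $0 \leq s \leq T_k - 1$, so it lies in exactly one block $[\tau_k,\tau_{k+1})$. Applying condition~\hyperref[condition_2]{(ii)} to that block gives $B_{\tau_k + s} \notin \{A_{\tau_k + s}, A_{\tau_k + s + 1}\}$, which is precisely the required $B_t \notin \{A_t, A_{t+1}\}$. The transition across a block boundary needs no separate treatment, because the time $t = \tau_{k+1}$ is the $s=0$ instance of condition~\hyperref[condition_2]{(ii)} for block $k+1$; meanwhile the endpoint statement $B_{\tau_{k+1}} \neq A_{\tau_{k+1}}$ of block $k$, together with~\hyperref[condition_3]{(iii)}, is what re-establishes admissibility feeding that next block.

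The substantial part is faithfulness: showing that $(A_t)_{t \geq 0}$ (and, symmetrically, $(B_t)_{t\geq 0}$) is a simple random walk, i.e.\ $\P(A_{t+1} = a' \mid A_{[0,t]}) = \one_{\{a' \in N(A_t)\}} / |N(A_t)|$. I would work with the enlarged filtration $\HH_t = \sigma(A_{[0,t]}, B_{[0,t]}, \{\tau_j : \tau_j \leq t\})$, recording the full joint trajectory and the block boundaries up to time $t$. Conditioned on $\HH_t$, the index $k$ and offset $s$ with $t = \tau_k + s$ are determined, and $A_{t+1}$ is the within-block step governed by condition~\hyperref[condition_1]{(i)}; the independence of distinct iterations given the current configuration (the final bullet of the construction) ensures that the conditional law under $\HH_t$ agrees with the block law asserted there. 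The crucial observation is that the resulting transition probability equals $\one_{\{a' \in N(A_t)\}}/|N(A_t)|$, which depends on the current vertex $A_t$ \emph{alone}; once this is secured at the level of $\HH_t$, the tower property closes the argument, since $A_t$ is $A_{[0,t]}$-measurable:
\[
\P(A_{t+1} = a' \mid A_{[0,t]}) = \E\big[ \P(A_{t+1}=a' \mid \HH_t) \,\big|\, A_{[0,t]} \big] = \E\Big[ \tfrac{\one_{\{a' \in N(A_t)\}}}{|N(A_t)|} \,\Big|\, A_{[0,t]} \Big] = \tfrac{\one_{\{a' \in N(A_t)\}}}{|N(A_t)|}.
\]

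I expect the main obstacle to be exactly this last alignment. Condition~\hyperref[condition_1]{(i)} as written conditions only on a single walker's own trajectory $A_{[0,t+s]}$, whereas the tower-property argument requires the uniform-step property to persist after conditioning on the \emph{entire} history $\HH_t$—in particular on the other walker's positions and on the event that the current block has not yet terminated. The care needed is to confirm that the coordination within each block is arranged so that Alice's step is uniform over $N(A_t)$ conditionally on all of this extra information, not merely on her own past; the fact that the target transition depends on $A_t$ alone is precisely what allows the competing scenarios (different block lengths, different trajectories of Bob) to be averaged without distortion. Verifying this compatibility is the heart of the proof, with the remaining items—measurability of the $\tau_k$ and the exactness of the time partition—being routine.
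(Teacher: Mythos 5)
Your proposal is correct and matches the paper's proof in substance: avoidance is read off from condition (ii) block by block, and faithfulness follows because every deterministic time lies in exactly one block, so the uniform-step identity of condition (i) holds after averaging over the block structure. Your tower-property formulation via the enlarged filtration $\HH_t$ is essentially a more explicit rendering of the paper's indicator identity $\P(A_{t+1}=a'\mid A_{[0,t]})\one_{\{S_{\ell-1}\leq t<S_\ell\}}=\frac{1}{|N(A_t)|}\one_{\{a'\in N(A_t)\}}\one_{\{S_{\ell-1}\leq t<S_\ell\}}$, and the subtlety you flag---that the step must be uniform given the full joint history and block structure, not merely Alice's own past---is precisely the point the paper likewise notes and defers to the scenario-by-scenario verification of condition (i).
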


\begin{proof}
Let $T_1,T_2,\dots$ be the values of $T$ realized in the inductive algorithm, and set $S_\ell = T_1 + \cdots + T_\ell$.
The `avoidance' part of Definition~\ref{ac_def} is satisfied because of \hyperref[condition_2]{(ii)}.
The `faithfulness' requirement is equally trivial, but we note the following subtlety.
For any deterministic $t\geq0$, we wish to show that
\eq{
\P\givenp{A_{t+1}=a'}{A_{[0,t]}} = \frac{1}{|N(A_t)|}\one_{\{a'\in N(A_t)\}},
}
and likewise for Bob's trajectory.
The above condition \hyperref[condition_1]{(i)} actually shows
\eq{
\P\givenp{A_{t+1}=a'}{A_{[0,t]}}\one_{\{S_{\ell-1}\leq t<S_\ell\}} = \frac{1}{|N(A_t)|}\one_{\{a'\in N(A_t)\}}\one_{\{S_{\ell-1}\leq t<S_\ell\}},
}
but of course there is always some (unique) $\ell$ for which $S_{\ell-1}\leq t < S_\ell$.
Therefore, the latter display implies the former.
\end{proof}

For the remainder of Section~\ref{3_regular}, we assume $G$ is a $3$-regular graph on $n\geq5$ vertices, not containing $\wt H_3$ from Figure~\ref{bad_3} as a subgraph.
Note that $n\geq5$ implies $G$ is not equal to a complete graph.
Therefore, there do exist initial positions $A_0=a$ and $B_0=b$ such that $b \notin \{a\}\cup N(a)$, meaning Proposition~\ref{conditions_to_coupling} will prove Theorem~\ref{main_thm}(b) once a coupling satisfying  \hyperref[condition_1]{(i)}--\hyperref[condition_3]{(iii)} is exhibited.

Having determined the strategy, we consider in the coming sections various possibilities for the graph's local structure around $a$ and $b$.
In each case, we will precisely specify a `local coupling' and check that \hyperref[condition_1]{(i)}--\hyperref[condition_3]{(iii)} are satisfied.
The possibilities we list are both exhaustive (of pairs of vertices $a,b$ between which the graph distance at least $2$) and mutually exclusive, so that the general coupling prescribed above is well-defined.
Some of the scenarios will involve the following definition.

\begin{defn}
We say that the vertex pair $(a,b)$ \textit{admits} $K_{2,2}$ if there are four distinct vertices, namely $a_1,a_2\in N(a)$ and $b_1,b_2\in N(b)$, such that $a_1$ and $a_2$ are each adjacent to both $b_1$ and $b_2$.
(In other words, locally around $a$ and $b$, the graph $G$ is as in Figure~\ref{scenario_6}.)
Otherwise, we say $(a,b)$ \textit{does not admit} $K_{2,2}$.
\end{defn}

\subsection{Scenario 1: $a$ and $b$ at distance at least $4$} \label{case_1}
If the graph distance between $a$ and $b$ is at least $4$, then we take $T=1$ and allow
Alice and Bob to independently move to a uniformly random neighbor.
Hence condition \hyperref[condition_1]{(i)} is trivially satisfied.
After the transitions, the graph distance will still be at least $2$, and so \hyperref[condition_2]{(ii)} and \hyperref[condition_3]{(iii)} are also clear.

\subsection{Scenario 2: $a$ and $b$ have three common neighbors} \label{case_2}
Next suppose that $N(a) = N(b) = \{c_1,c_2,c_3\}$.
Observe that if $i\neq j$, then $c_i$ and $c_j$ are not adjacent, for otherwise $G$ would contain a copy of $\wt H_3$.
Figure~\ref{scenario_2} shows the resulting situation.
In particular, we can again take $T=1$ and let Alice and Bob \textit{jointly} select from one of the following sequences of transitions, with equal probability:
\eq{
A_{t+1}= c_1,\, B_{t+1}=c_2, \qquad
A_{t+1}=c_2,\, B_{t+1}=c_3, \qquad
A_{t+1}=c_3,\, B_{t+1}=c_1.
}
It is easy to see that this coupling satisfies conditions \hyperref[condition_1]{(i)}--\hyperref[condition_3]{(iii)}.

\begin{figure}
\subfloat[{\hyperref[case_2]{Scenario 2}}]{
\includegraphics[clip,trim=1in 0.6in 1in 0.7in,width=0.47\textwidth]{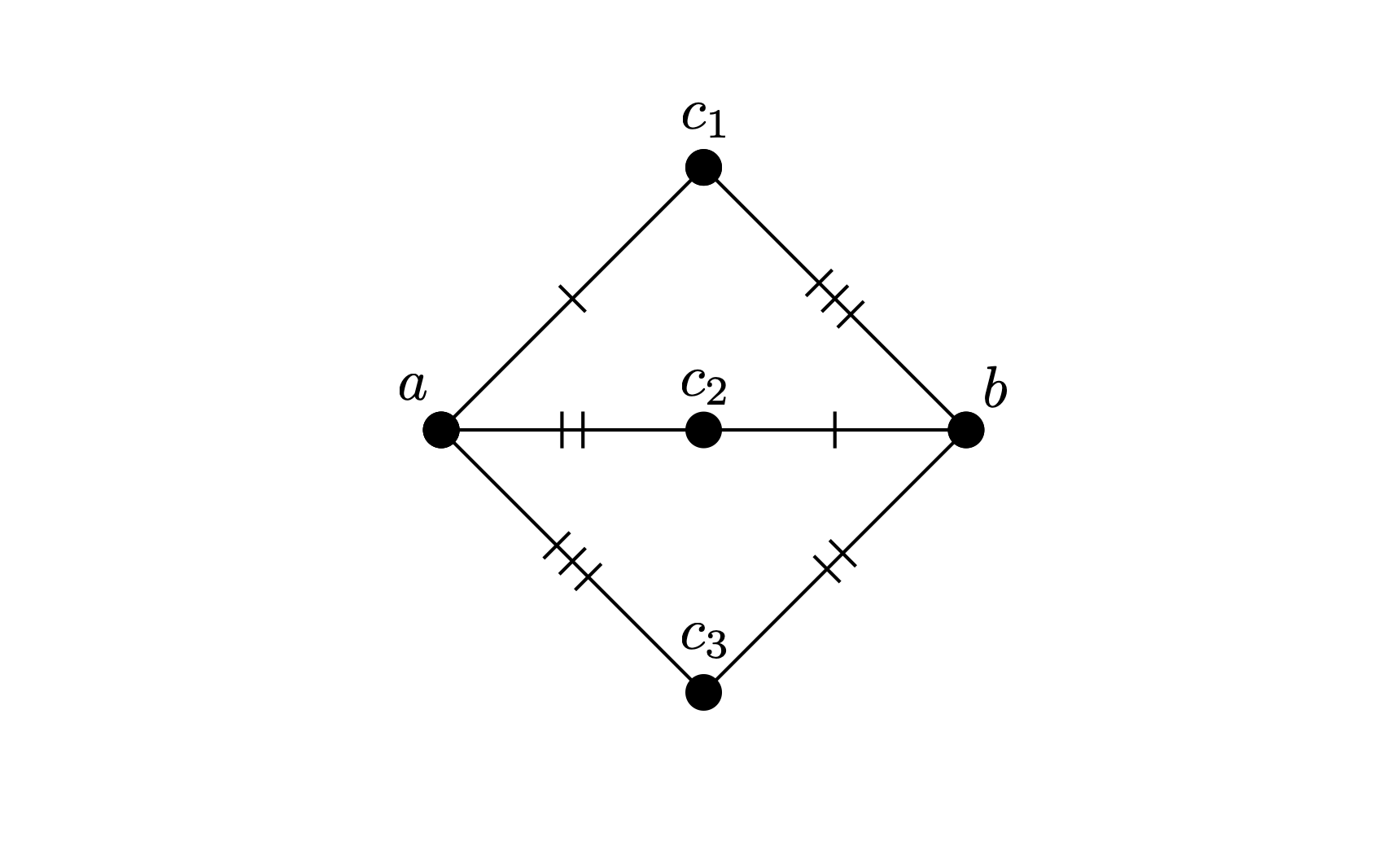}
\label{scenario_2}
}
\hfill
\subfloat[{\hyperref[case_3a]{Scenario 3a}}]{
\includegraphics[clip,trim=1in 0.6in 1in 0.7in,width=0.47\textwidth]{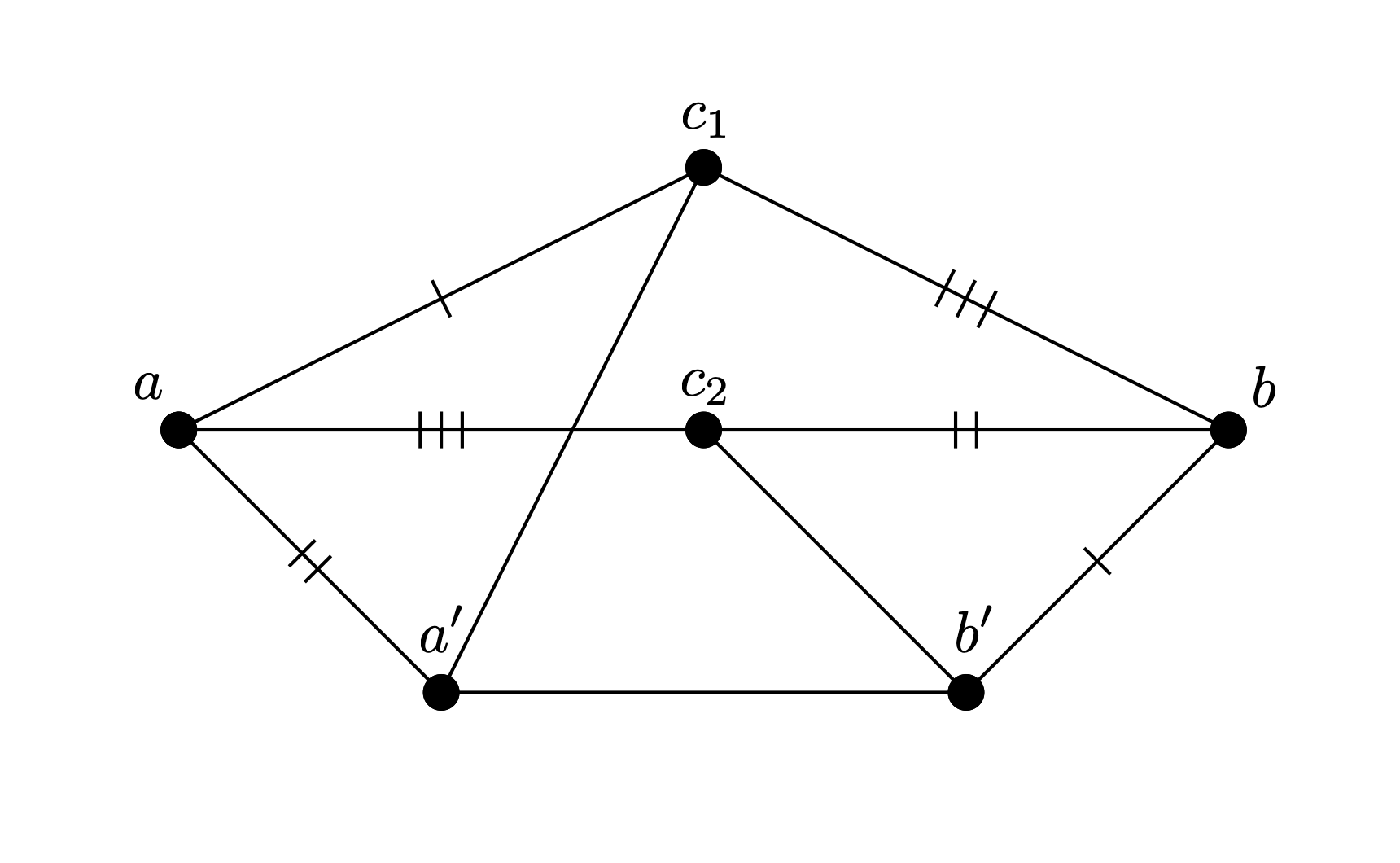}
\label{scenario_3a}
}
\\
\subfloat[{\hyperref[case_4]{Scenario 4}}]{
\includegraphics[clip,trim=1in 0.4in 1in 0.7in,width=0.47\textwidth]{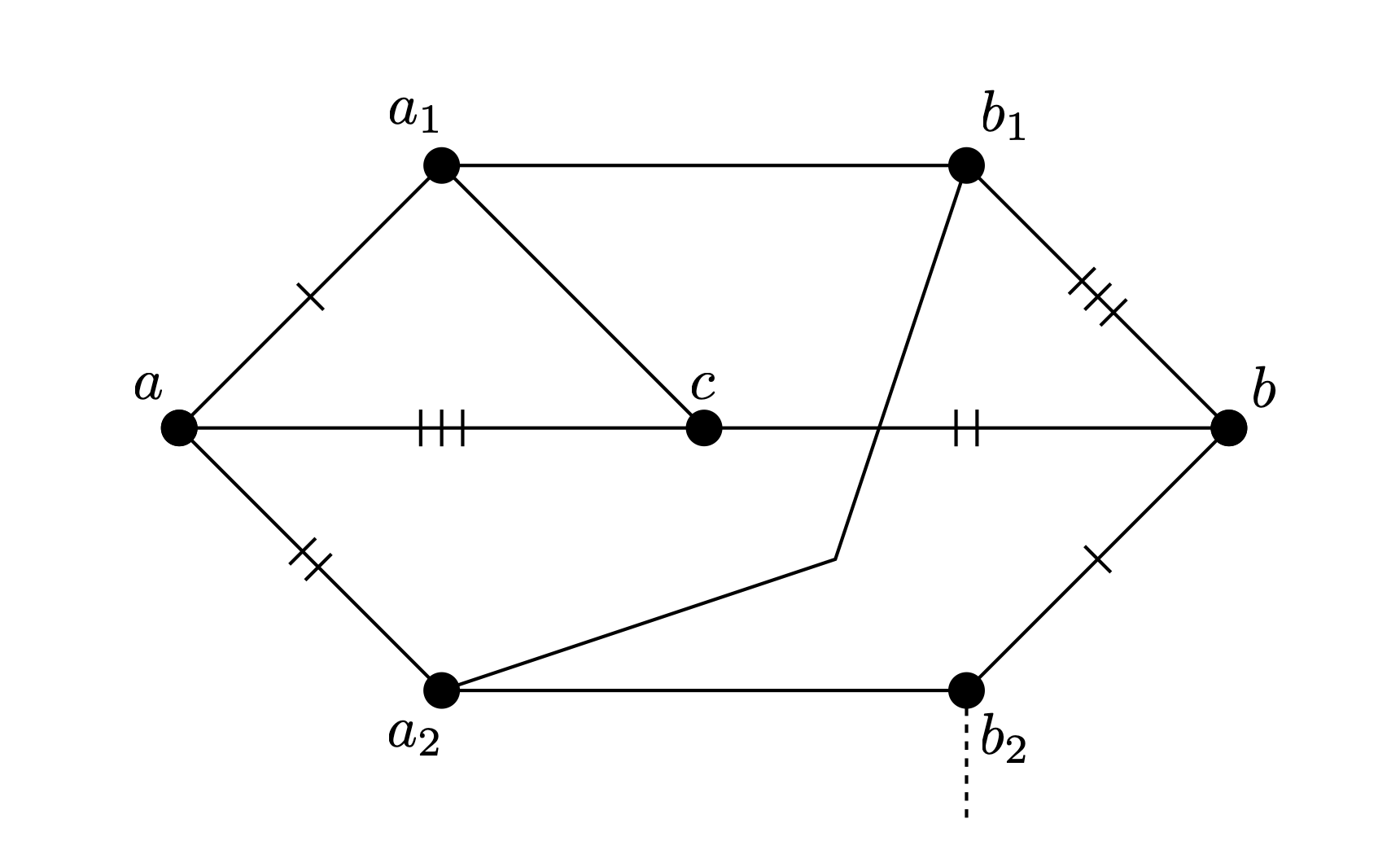}
\label{scenario_4}
}
\hfill
\subfloat[{\hyperref[case_5]{Scenario 5}}]{
\includegraphics[clip,trim=1in 0.4in 1in 0.7in,width=0.47\textwidth]{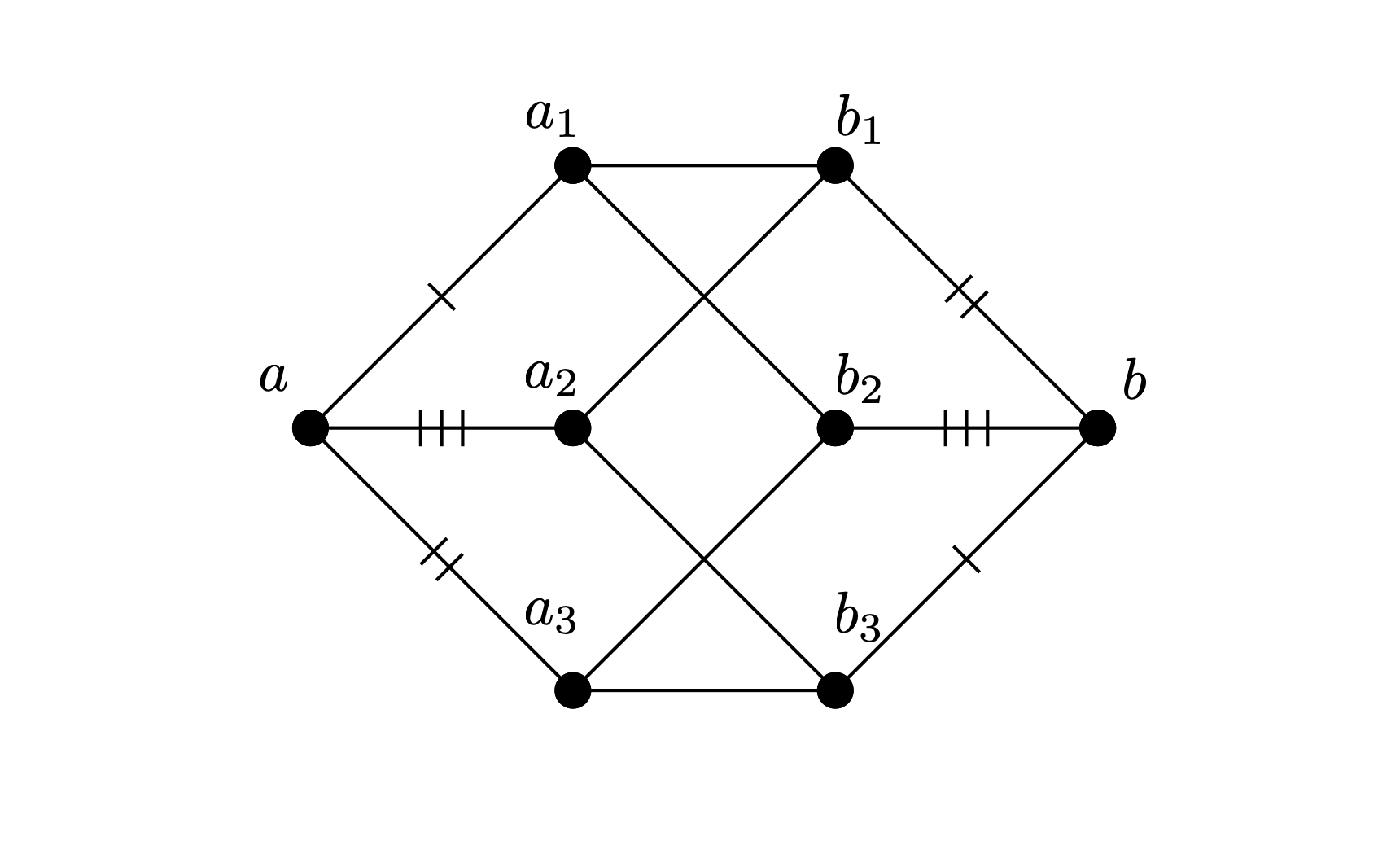}
\label{scenario_5}
}
\caption{Shown here are the local configurations in $3$-regular graphs for which a one-step coupling suffices.
In each case, the three possible edges to be traversed by Alice are matched with those to be traversed by Bob; corresponding edges are marked by the same number of dashes.
No matter which pair of edges is traversed, Alice and Bob will remain separated by graph distance at least 2 after having both moved.
In (a), each of $c_1,c_2,c_3$ is adjacent to one other vertex (not necessarily the same one) not shown in the diagram.
In (c), the same is true for $b_2$.
}
\label{one_step}
\end{figure}

\subsection{Scenario 3: $a$ and $b$ have two common neighbors} \label{case_3}
Let us write $N(a) = \{c_1,c_2,a'\}$ and $N(b) = \{c_1,c_2,b'\}$, where $a'\neq b'$.
We separately consider two cases.

\subsubsection{Scenario 3a: common neighbors $c_1,c_2$ are not adjacent} \label{case_3a}
As in Scenarios \hyperref[case_1]{1} and \hyperref[case_2]{2}, Alice and Bob will only need to choose their immediate next step in this situation; that is, $T=1$. 
Consequently, whether or not condition \hyperref[condition_3]{(iii)} is met
depends solely on the edges between neighbors of $a$ and neighbors of $b$.
Moreover, \hyperref[condition_3]{(iii)} is a strictly weaker requirement if any of these edges are removed.
Therefore, there is no loss of generality in assuming that $c_1$ is adjacent to $a'$, and $c_2$ is adjacent to $b'$; the resulting graph is displayed in Figure~\ref{scenario_3a}.
Alice and Bob thus select uniformly one of the following sequences of transitions:
\eq{
A_{t+1}= c_1,\, B_{t+1}=b', \qquad
A_{t+1}= a',\, B_{t+1}=c_2, \qquad
A_{t+1}= c_2,\, B_{t+1}= c_1.
}
Once again, this coupling is easily seen to satisfy conditions \hyperref[condition_1]{(i)}--\hyperref[condition_3]{(iii)}.

\subsubsection{Scenario 3b: common neighbors $c_1,c_2$ are adjacent} \label{case_3b}
In this case, we will need to take $T=2$.
Observe that $c_1$ and $c_2$ are each incident already to three edges, and so neither is adjacent to $a'$ or $b'$.
On the other hand, both $a'$ and $b'$ have two unspecified neighbors.
Let us write $N(a') = \{a,a''_1,a''_2\}$ and $N(b') = \{b,b''_1,b''_2\}$, where $b'$ may be equal to one of $a''_1,a''_2$ (equivalently, $a'$ may be equal to one of $b''_1,b''_2$).
In any case, however, we have
\eq{
\{a,c_1,c_2\} \cap N(b') = \varnothing = \{b,c_1,c_2\} \cap N(a').
}
Now Alice and Bob select uniformly one of the nine sequences of transitions listed in Figure~\ref{scenario_3b}.
Simple inspection shows that this two-step coupling satisfies conditions \hyperref[condition_1]{(i)}--\hyperref[condition_3]{(iii)}.

\begin{figure}
\includegraphics[clip,trim=1in 0.6in 1in 0.7in,width=0.47\textwidth]{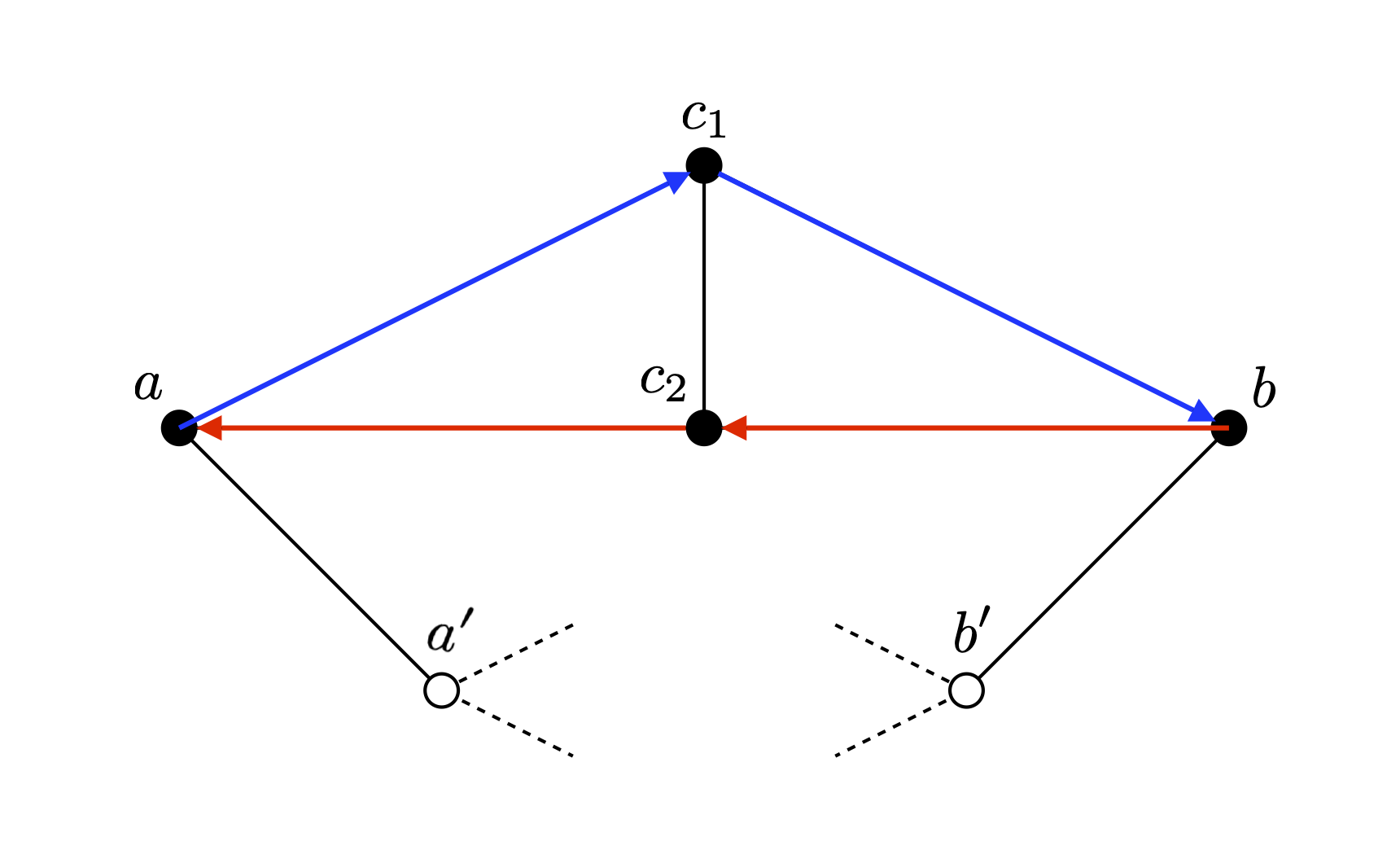}
\hspace{0.65in}
\begin{tabular}{cc|cc}
\vspace{-1.7in}
&&& \\
$A_{t+1}$ & $A_{t+2}$ & $B_{t+1}$ & $B_{t+2}$ \\ \hline
$c_1$ & $a$ & $c_2$ & $b$ \\
$c_1$ & $b$ & $c_2$ & $a$ \\
$c_1$ & $c_2$ & $b'$ & $b''_1$ \\
$c_2$ & $a$ & $c_1$ & $b$ \\
$c_2$ & $b$ & $c_1$ & $a$ \\
$c_2$ & $c_1$ & $b'$ & $b''_2$ \\
$a'$ & $a$ & $b'$ & $b$ \\
$a'$ & $a''_1$ & $c_1$ & $c_2$ \\
$a'$ & $a''_2$ & $c_2$ & $c_1$ \\
\end{tabular}
\caption{Shown here is \hyperref[case_3b]{Scenario 3b}, the local configuration in $3$-regular graphs for which a two-step coupling is required.
Alice and Bob jointly choose their next $T=2$ steps so as both to avoid collision and to remain distance at least $2$ apart after having each completed two transitions.
The paired movements are displayed in the table on the right.
As an example, the table's second row is illustrated in the diagram on the left.
Vertices $a'$ and $b'$ may be adjacent and/or share a neighbor, but the dashed edges in question---linking $a'$ to $a_1'',a_2''$ and $b'$ to $b_1',b_2''$---are not incident to any of the vertices shown as solid dots.
}
\label{scenario_3b}
\end{figure}

\subsection{Scenario 4: $a$ and $b$ have one common neighbor, $(a,b)$ does not admit $K_{2,2}$} \label{case_4}
In this circumstance, we return to needing only $T=1$.
Let us write $N(a) = \{c,a_1,a_2\}$ and $N(b) = \{c,b_1,b_2\}$, where $\{a_1,a_2\}\cap\{b_1,b_2\} = \varnothing$.
By assumption, at least one of the edges $\{a_1,b_1\}, \{a_1,b_2\}, \{a_2,b_1\}, \{a_2,b_2\}$ is not present.
Without loss of generality, we assume $\{a_1,b_2\}$ is not present.
By the same logic as in \hyperref[case_3a]{Scenario 3a}, because $T=1$, there is also no loss of generality in assuming that the remaining three edges are all present, as well as $\{a_1,c\}$.  (The scenario of $c$ being adjacent to $b_2$ is technically distinct because Alice will move from $a$ \textit{before} Bob moves from $b$, but because the two walkers start at distance $2$, the order is irrelevant).
As illustrated in Figure~\ref{scenario_4}, the following one-step coupling satisfies \hyperref[condition_1]{(i)}--\hyperref[condition_3]{(iii)}:
\eq{
A_{t+1}= a_1,\, B_{t+1}=b_2, \qquad
A_{t+1}= a_2,\, B_{t+1}=c, \qquad
A_{t+1}= c,\, B_{t+1}=b_1.
}

\subsection{Scenario 5: $a$ and $b$ at distance $3$, $(a,b)$ does not admit $K_{2,2}$} \label{case_5}
The situation here is similar to \hyperref[case_4]{Scenario 4}, and we will again take $T=1$.
Let us write $N(a) = \{a_1,a_2,a_3\}$ and $N(b) = \{b_1,b_2,b_3\}$, where $N(a)\cap N(b) = \varnothing$.
As before, the only relevant edges for condition \hyperref[condition_3]{(iii)} are those between $N(a)$ and $N(b)$.
Now, each $a_i$ is adjacent to at most two $b_j$'s. 
But by assumption, no two of the $a_i$'s are connected to the \textit{same} two $b_j$'s.
It is thus clear that the maximally restrictive graph is the one shown in Figure~\ref{scenario_5}, for which the suitable coupling is given by
\eq{
A_{t+1}= a_1,\, B_{t+1}=b_3, \qquad
A_{t+1}= a_2,\, B_{t+1}=b_2, \qquad
A_{t+1}= a_3,\, B_{t+1}=b_1.
}

\subsection{Scenario 6: $(a,b)$ admits $K_{2,2}$} \label{case_6}
This final scenario requires us to take a \textit{random} $T$.
Let us write $N(a) = \{a_1,a_2,c_a\}$ and $N(b) = \{b_1,b_2,c_b\}$, where $c_a$ and $c_b$ may be adjacent or even equal.
We are assuming that $a_1$ and $a_2$ are each adjacent to both $b_1$ and $b_2$, and hence all four of these vertices admit no further edges.
See Figure~\ref{scenario_6} for an illustration.
\begin{figure}
\subfloat[{\hyperref[case_6]{Scenario 6}}]{
\includegraphics[clip,trim=1in 2in 1in 0.7in,width=0.47\textwidth]{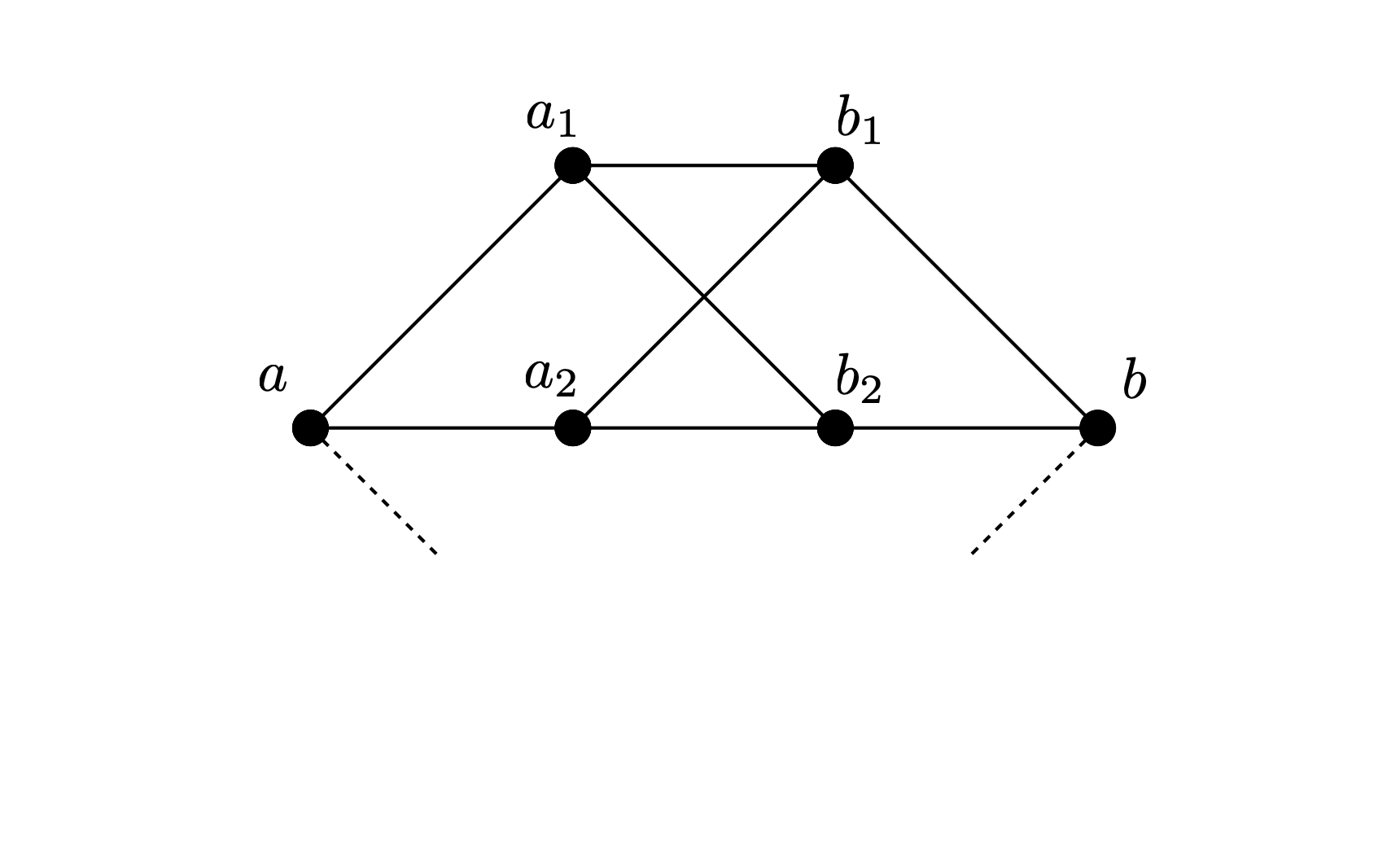}
\label{scenario_6}
}
\hfill
\subfloat[Coupling 1]{
\includegraphics[clip,trim=1in 2in 1in 0.7in,width=0.47\textwidth]{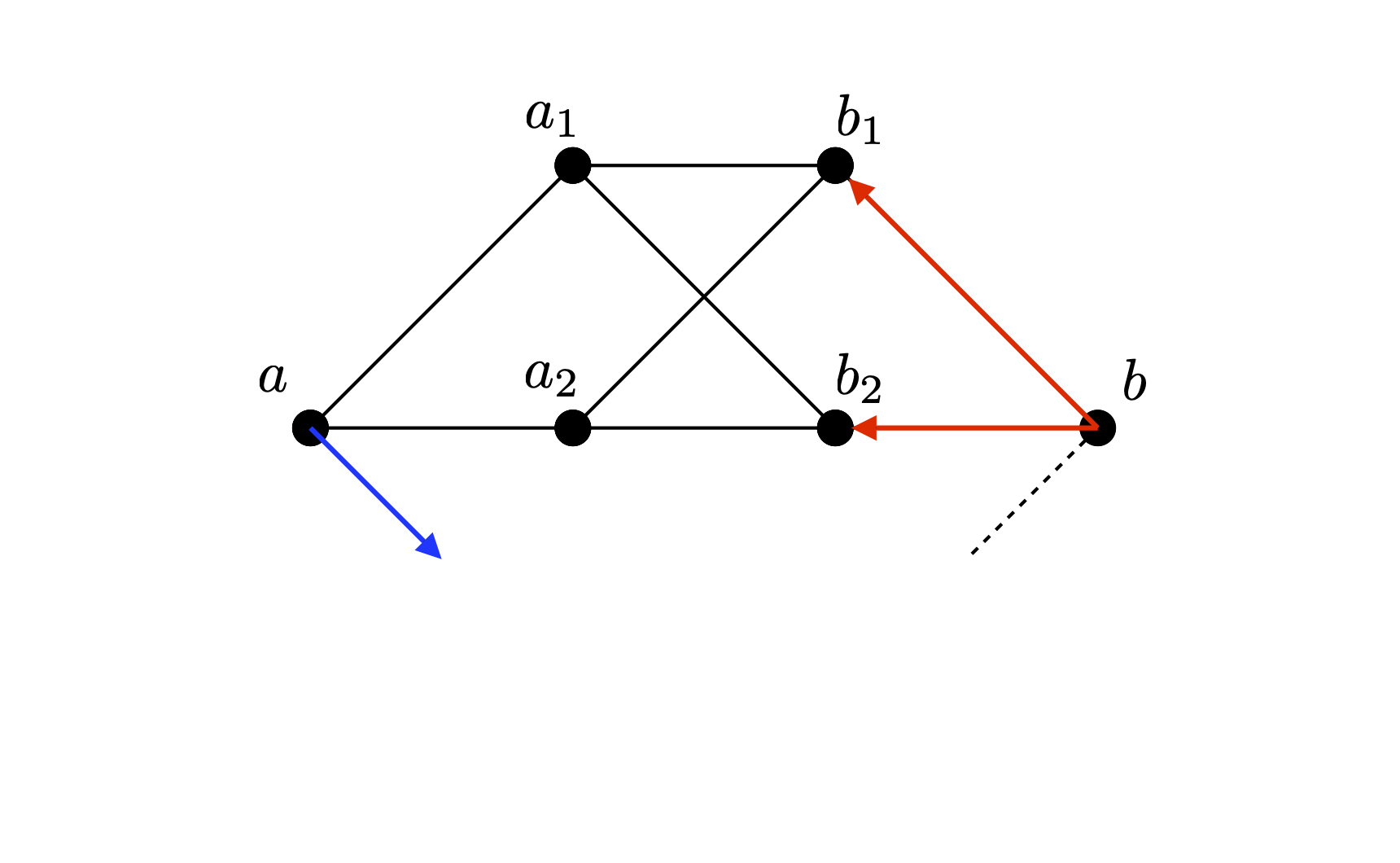}
\label{scenario_6_1}
}
\\
\subfloat[Coupling 2]{
\includegraphics[clip,trim=1in 2in 1in 0.7in,width=0.47\textwidth]{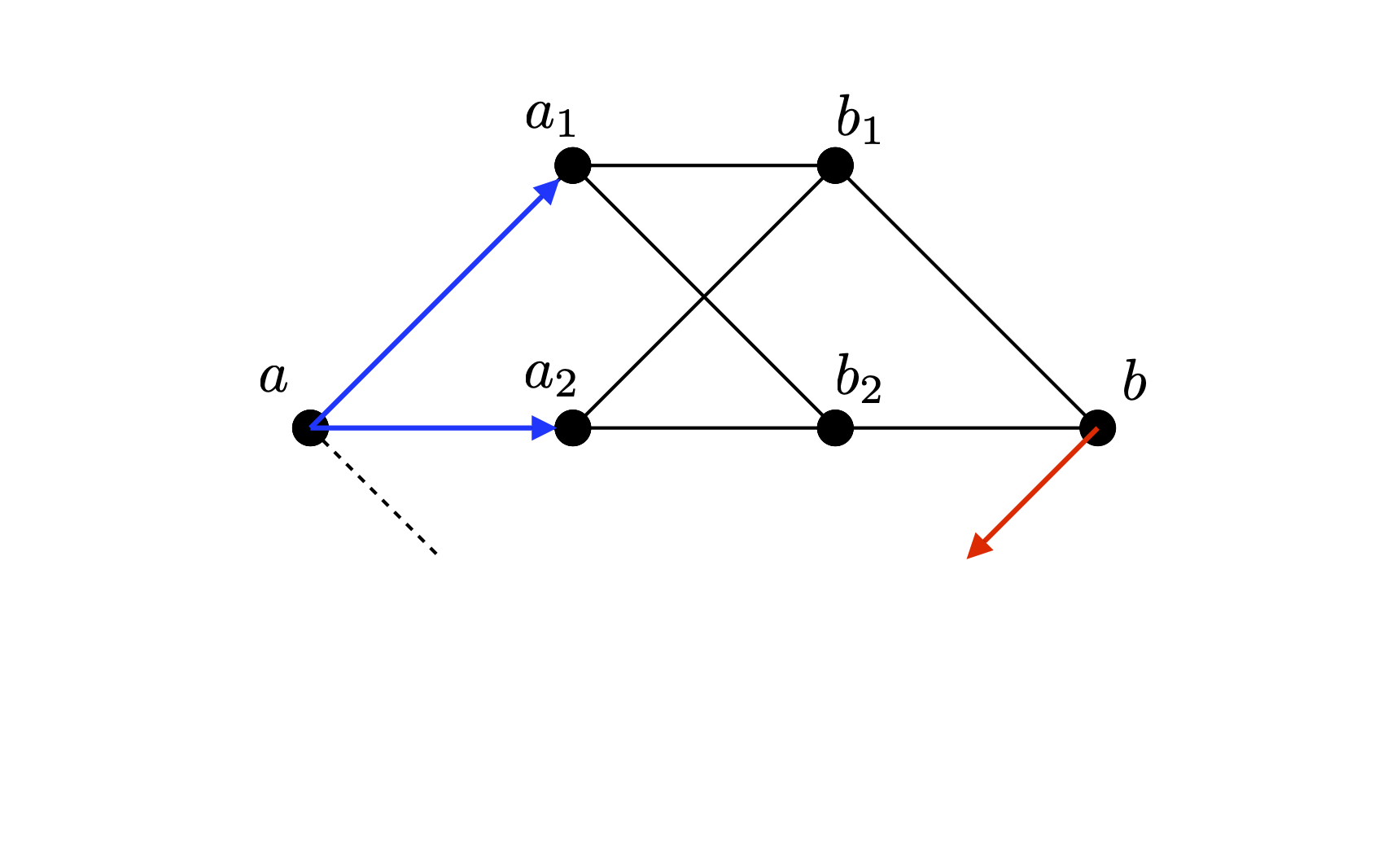}
\label{scenario_6_2}
}
\hfill
\subfloat[Coupling 3]{
\includegraphics[clip,trim=1in 2in 1in 0.7in,width=0.47\textwidth]{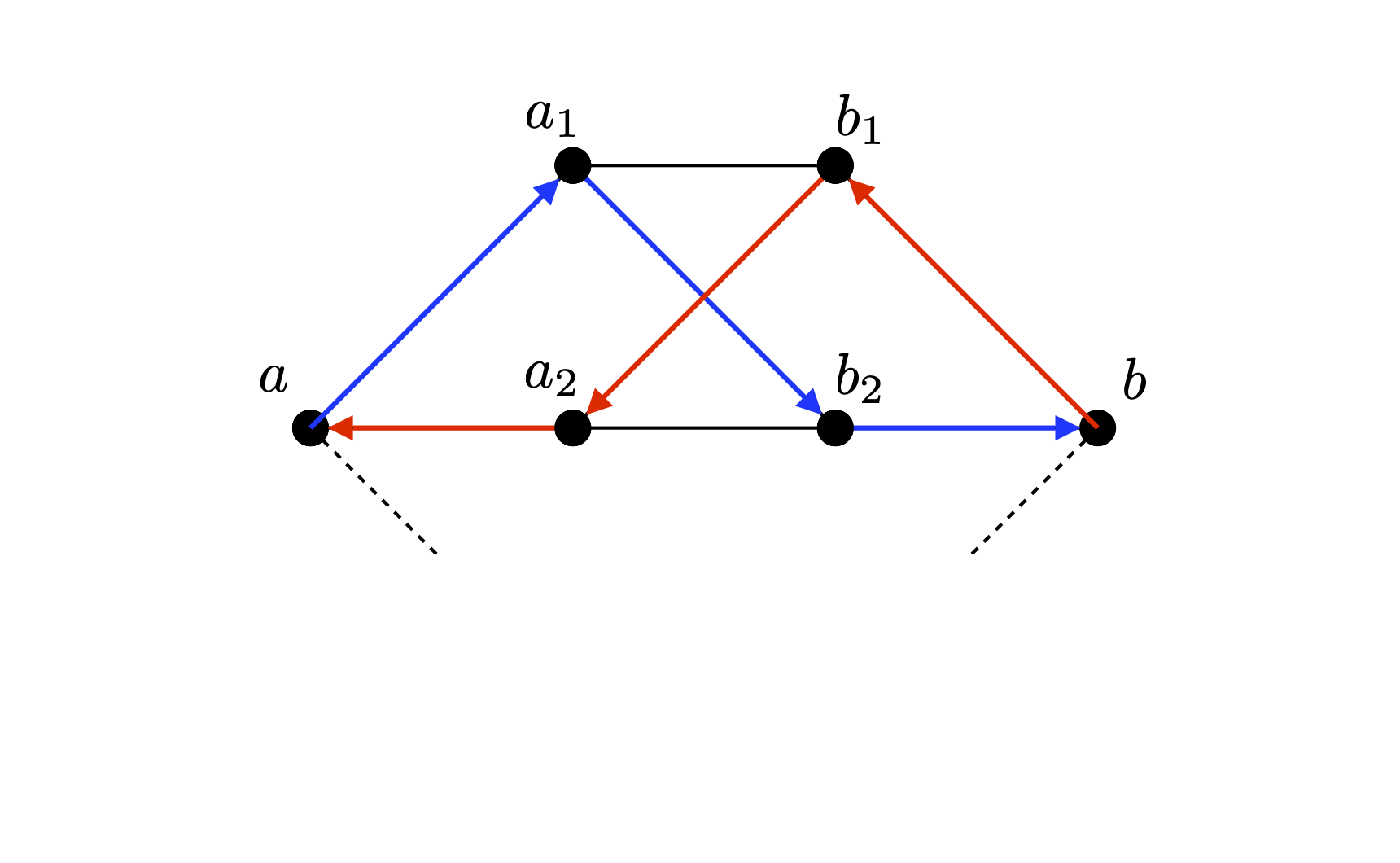}
\label{scenario_6_3}
}
\caption{Shown here is \hyperref[case_6]{Scenario 6}, the local configuration in $3$-regular graphs for which a variable-step coupling is required.
Each of three possible strategies is chosen with probability $\frac{1}{3}$.
In (b), Alice moves away from the local copy of $K_{2,2}$, while Bob moves onto it.
In (c), these actions are exchanged.
Finally, in (d), both walkers enter $K_{2,2}$ and coordinate their movements so as both to avoid collision and to exit $K_{2,2}$ in the same number of steps.
The example shown in the diagram consists of three steps per walker.
}
\label{variable_step}
\end{figure}
The coupling is as follows:
\begin{itemize}
\item (Figure~\ref{scenario_6_1}) With probability $\frac{1}{3}$, Alice moves to $c_a$, after which Bob moves to one of $b_1,b_2$ with equal chance.
\item (Figure~\ref{scenario_6_2}) With probability $\frac{1}{3}$, Bob moves to $c_b$ after Alice has moved to one of $a_1,a_2$ with equal chance.
\item (Figure~\ref{scenario_6_3}) With probability $\frac{1}{3}$, Alice moves to one of $a_1,a_2$ with equal chance and then performs simple random walk until hitting $\{a,b\}$.
This trajectory is of the form
\eq{
a \to a_{i_1} \to b_{j_2} \to a_{i_3} \to \cdots \to a_{i_{T-1}} \to a \qquad \text{($T$ even)}
}
or
\eq{
a \to a_{i_1} \to b_{j_2} \to a_{i_3} \to \cdots \to b_{j_{T-1}} \to b. \qquad \text{($T$ odd)}
}
Correspondingly, Bob follows the trajectory
\eq{
b \to b_{j_1} \to a_{i_2} \to b_{j_3} \to \cdots \to b_{j_{T-1}} \to b \qquad \text{($T$ even)}
}
or
\eq{
b \to b_{j_1} \to a_{i_2} \to b_{j_3} \to \cdots \to a_{i_{T-1}} \to a, \qquad \text{($T$ odd)}
}
where 
\eq{
b_{j_{2q-1}} &= \begin{cases}
b_1 &\text{if $2q<T$ and $b_{j_{2q}}=b_2$},\\
b_2 &\text{if $2q<T$ and $b_{j_{2q}}=b_1$},\\
b_1 &\text{with probability $\frac{1}{2}$, if $2q = T$}, \\
b_2 &\text{with probability $\frac{1}{2}$, if $2q = T$}, \\
\end{cases} \\
a_{i_{2q}} &= \begin{cases}
a_1 &\text{if $2q+1<T$ and $a_{i_{2q+1}}=a_2$},\\
a_2 &\text{if $2q+1<T$ and $a_{i_{2q+1}}=a_1$},\\
a_1 &\text{with probability $\frac{1}{2}$, if $2q+1 = T$}, \\
a_2 &\text{with probability $\frac{1}{2}$, if $2q+1 = T$}. \\
\end{cases}
}
%\eq{
%\quad j_{2q-1} &= \begin{cases}
%3-j_{2q} &q=1,\dots,\lfloor{T/2-1\rfloor}, \\
%1 &\text{with probability $\frac{1}{2}$, $2q=T$}, \\
%2 &\text{with probability $\frac{1}{2}$, $2q=T$}, \\
%\end{cases}
%\\
%i_{2q} &= \begin{cases}
%3-i_{2q+1} &q=1,\dots,\lfloor{T/2-1\rfloor}, \\
%1 &\text{with probability $\frac{1}{2}$, $2q=T-1$}, \\
%2 &\text{with probability $\frac{1}{2}$, $2q=T-1$}, \\
%\end{cases}
%}
\end{itemize}
In the first two possibilities, $T=1$.
In the third, $T$ is equal to the number of steps required for Alice to return to $\{a,b\}$.
Viewed together, the three possibilities lead to Alice and Bob choosing their next position uniformly from $N(a)$ and $N(b)$, respectively.
That is, \eqref{srw_condition} holds when $s = 0$.
Furthermore, in the third possible outcome, Alice's movements after transitioning to $\{a_1,a_2\}$ are independent of her history and follow the law of simple random walk.
Since Bob acts in a symmetric fashion---exchanging any $b_1$ with $b_2$, $b_2$ with $b_1$, $a_1$ with $a_2$, and $a_2$ with $a_1$ so that condition \hyperref[condition_2]{(ii)} is met---the same is true of his trajectory.
Therefore, \eqref{srw_condition} holds also when $1\leq s\leq T-1$, and so condition \hyperref[condition_1]{(i)} is satisfied.
Finally, case-by-case inspection reveals that condition \hyperref[condition_3]{(iii)} is also satisfied.

\section{Proof of Theorem~\hyperref[main_thm_a]{\ref*{main_thm}\ref*{main_thm_a}}: $d$-regular graphs, $d\geq4$} \label{d_regular}

\setcounter{subsection}{-1}
\subsection{Outline of coupling} \label{construction_outline}
In the setting of general $d\geq4$, the case-by-case approach used for $3$-regular graphs becomes intractable.
Nonetheless, parts of this section rely on $d\geq4$, and so the work of Section~\ref{3_regular} will not have been redundant.
Consider the following strategy:
\begin{itemize}
\item Suppose at time $t\geq0$, Alice is at vertex $A_t = a$ and is next to move.
\item Bob is at vertex $B_t = b$ which is not equal to $a$.
\item Assume a uniformly random $E_{t+1}=e\in N(a)$, which we call the `excluded vertex', has been selected such that (i) given $A_{t}=a$, the value of $E_{t+1}$ is conditionally independent of Alice's history; and (ii) if $b \in N(a)$, then $e = b$.
\item As depicted in Figure~\ref{algorithm}, we will specify a coupling (depending on $a$, $b$, and $e$) by which:
\begin{enumerate}[label=\textup{(\Alph*)}]
\item \label{alice_choices}
Alice will take two additional steps: the first to a uniformly random $A_{t+1}=a'\in N(a)\setminus\{e\}$, and the second to a uniformly random $A_{t+2}=a''\in N(a')$.
Given $(A_t,E_{t+1})$, the two steps are independent of %each other and, given $(a,e)$, conditionally independent of 
Alice's history. 
Note that condition (ii) on $e$ guarantees $a'\neq b$.
\item \label{bob_choices} 
Bob will take one additional step to a uniformly random $B_{t+1} = b' \in N(b)$ which, given $B_t =b$, is independent of his history.
He will then select a uniformly random $E_{t+2} = e' \in N(b')$ that is (i) independent of Bob's history given his current position $B_{t+1}=b'$; and (ii) equal to $a''$ if $a'' \in N(b')$.
\end{enumerate}
\item Following this procedure (Alice moves $a\to a'$, Bob moves $b\to b'$, Alice moves $a'\to a''$, Bob selects $e'$), the roles of Alice and Bob will have been exchanged, and so the procedure---which is made precise in Section~\ref{formal_construction}---can be repeated.
\end{itemize}

\begin{figure}
\subfloat[Alice's first step]{
\includegraphics[clip,trim=4.4in 2in 1.8in 1in,height=1.5in]{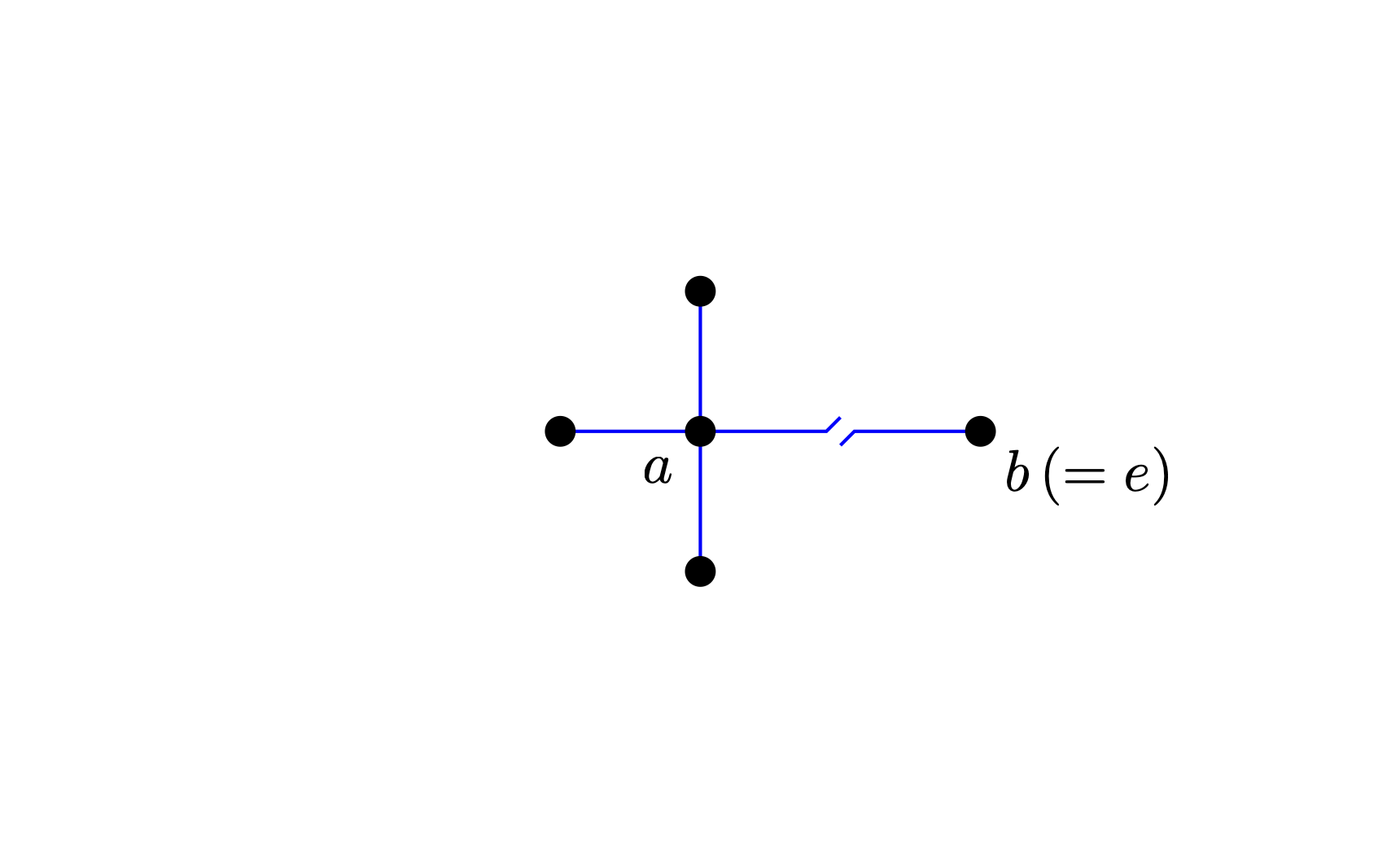}
\label{alice_1}
}
\hspace{0.6in}
\subfloat[Alice's second step]{
\includegraphics[clip,trim=3in 2in 3in 1in,height=1.5in]{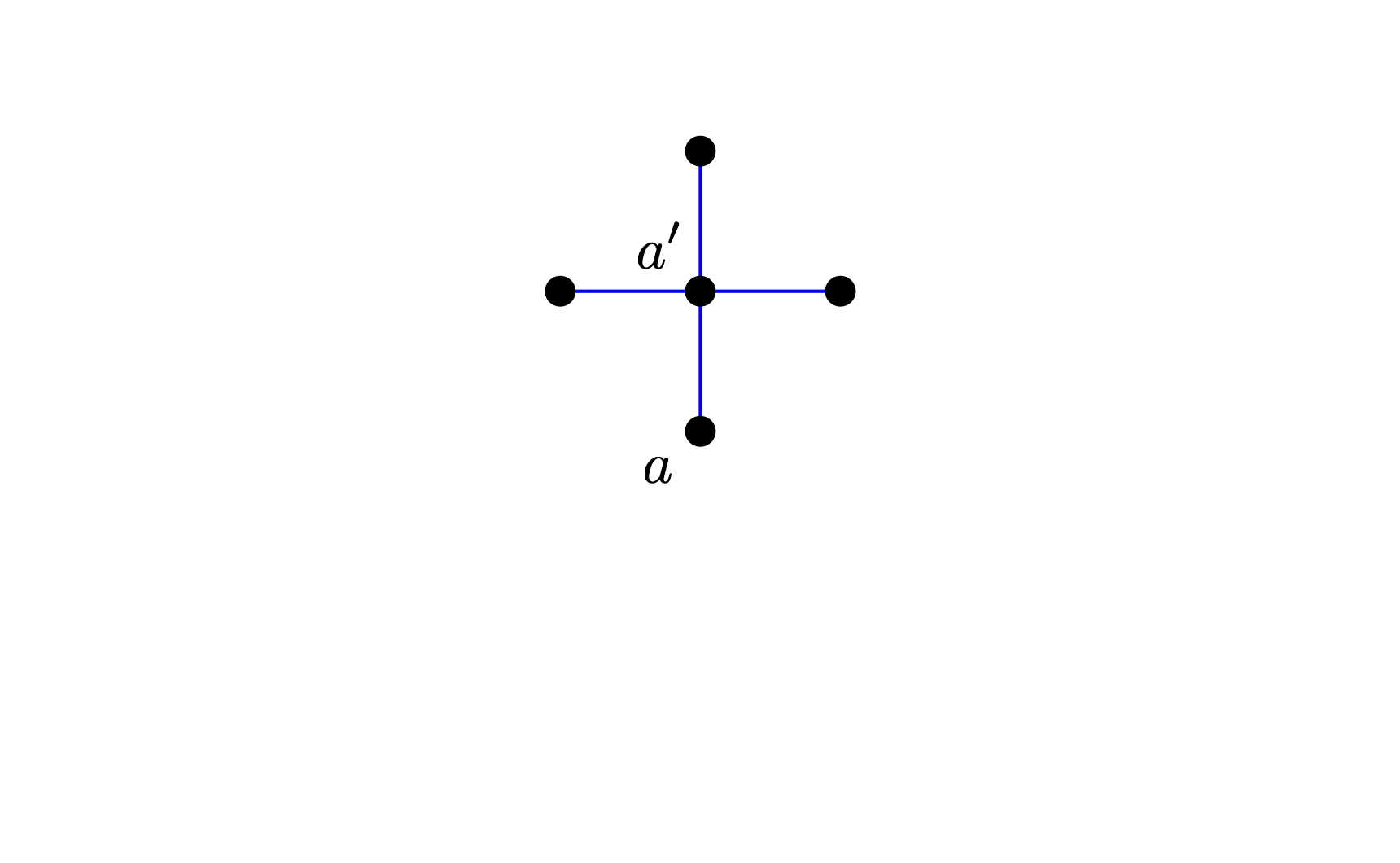}
\label{alice_2}
}
\\
\subfloat[Bob's step]{
\includegraphics[clip,trim=4.4in 2.8in 1.8in 1in,height=1.21in]{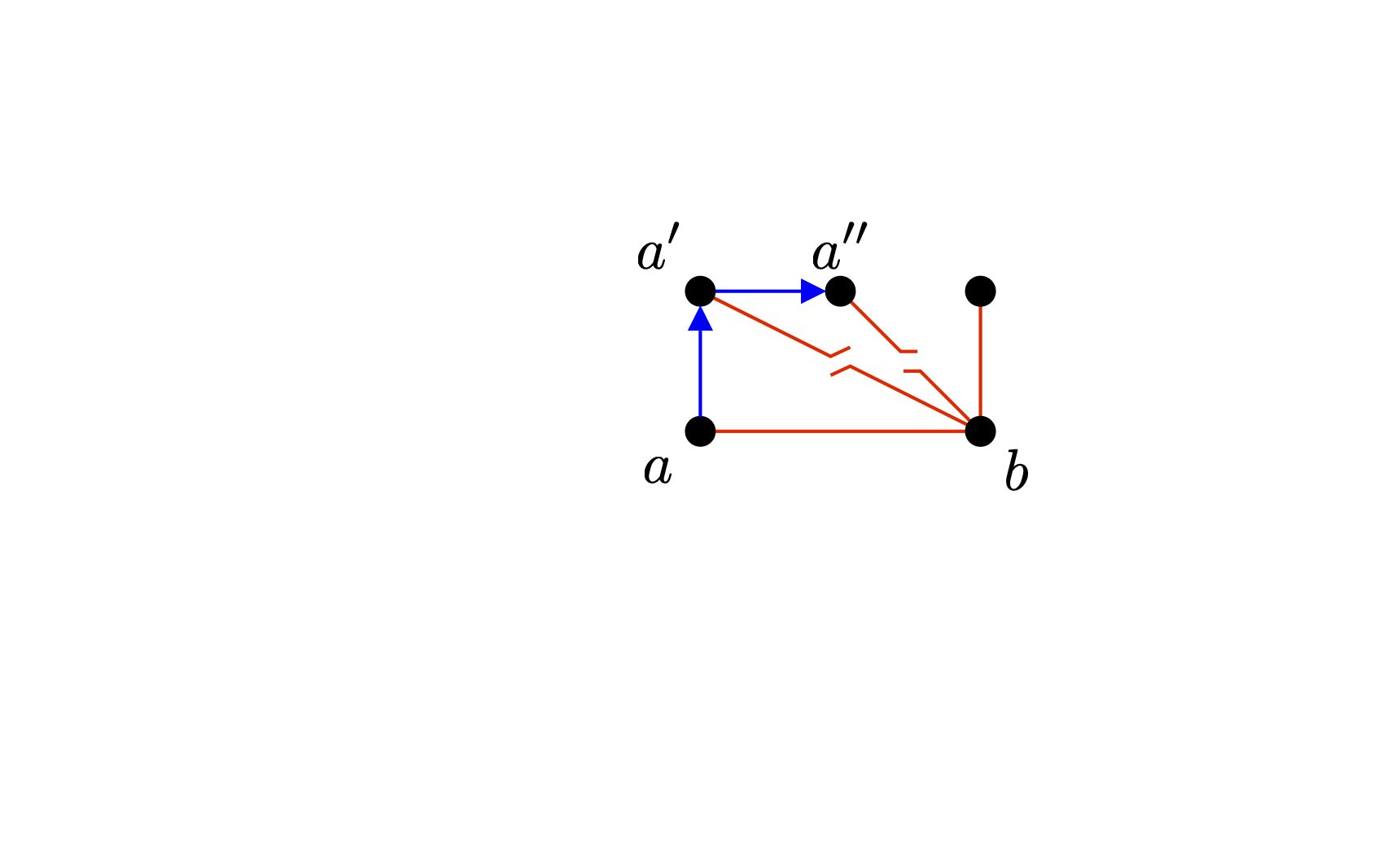}
\label{bob_1}
}
\hspace{0.6in}
\subfloat[Bob's next step]{
\includegraphics[clip,trim=5.3in 2.8in 0.7in 1in,height=1.21in]{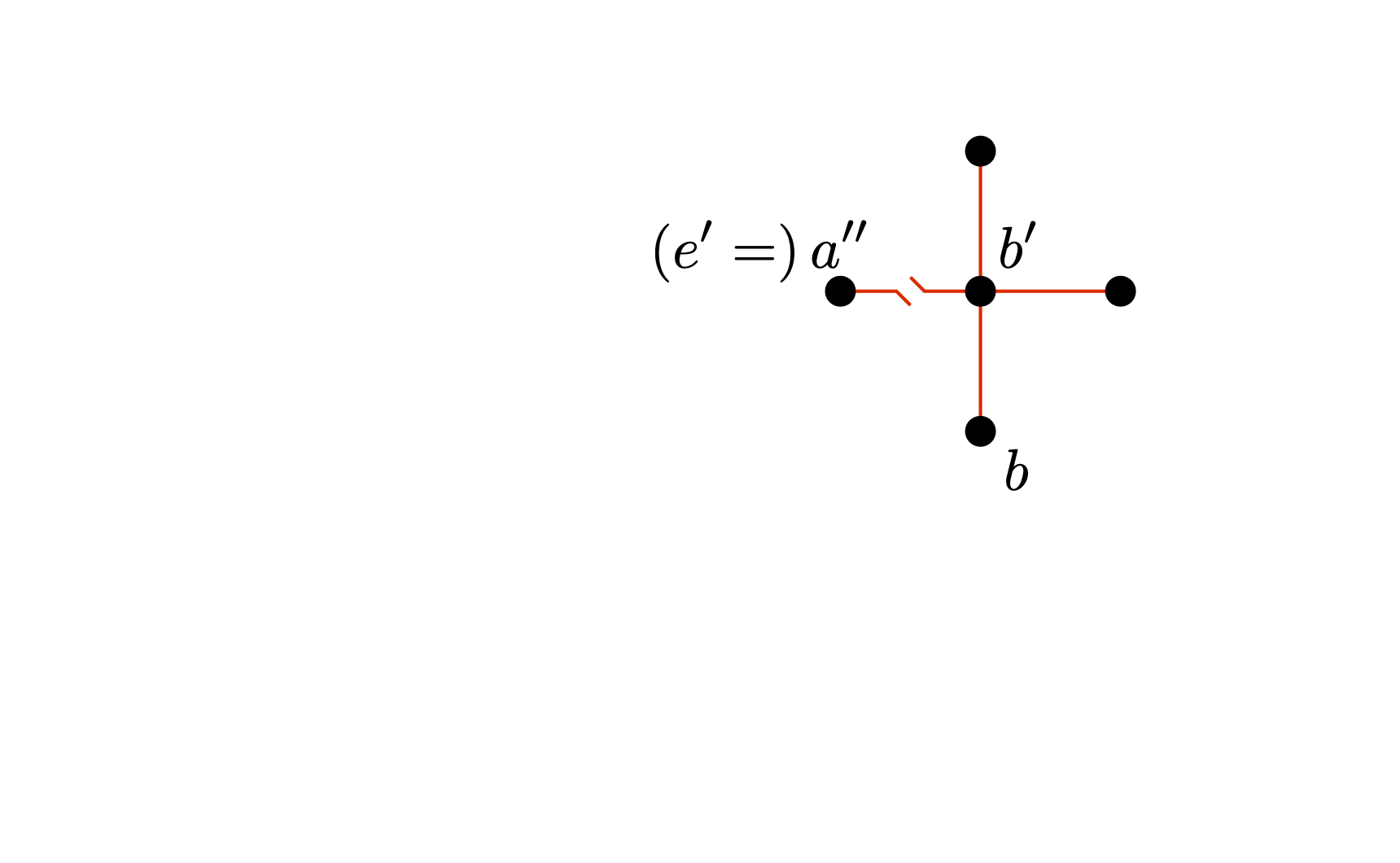}
\label{bob_2}
}
\caption{A single iteration of the avoidance algorithm for $d$-regular graphs, shown here with $d=4$.
In (a), Alice is at vertex $a$, Bob at $b$, and Alice is next to move.  She may choose any neighboring vertex except $e$, where $e$ is assumed equal to Bob's position if $b\in N(a)$.
In (b), Alice has chosen to move from $a$ to $a'$.
Her next step can be to any neighbor of $a'$.
In (c), Alice has chosen to move from $a'$ to $a''$.
Once Bob knows her trajectory $a\to a'\to a''$, he can move to any neighboring vertex not equal to $a'$ or $a''$.
In (d), Bob has chosen to move from $b$ to $b'$.
He then selects a neighbor $e' \in N(b')$ to be his excluded vertex for the next iteration of the algorithm, in which he and Alice exchange roles.
If $a''\in N(b')$, then Bob must take $e'=a''$.
}
\label{algorithm}
\end{figure}

The subtleties of the construction arise from the need to guarantee $b' \notin \{a',a''\}$ while also preserving the uniformity of Alice's and Bob's choices.
To this end, we will establish some graph theoretic and combinatorial properties in Sections~\ref{subgraph_restatements} and~\ref{pair_neighbors}.
It should be otherwise intuitively clear that if Alice and Bob adhere to the stipulated protocol, they will each carry the law of simple random walk.
We formally check this fact in Section~\ref{srw_verification}.

\subsection{Step 1: graph theoretic preliminaries} \label{subgraph_restatements}

Recall the graph $H_d$ from Figure~\ref{bad_d}.

\begin{lemma} \label{bad_subgraph_restated}
If $G=(V,E)$ is a $d$-regular graph, then the following three statements are equivalent:
\begin{enumerate}[label=\textup{(\arabic*)}]
\item $G$ contains no copy of $H_d$ as a subgraph;
\item for every $a,b\in V$ with $a\neq b$, we have
$N(a)\cup\{a\}\neq N(b)\cup\{b\}$; and
\item for every $a,b\in V$ with $N(a)\neq N(b)$, the set $N(a)\setminus(N(b)\cup\{b\})$ is nonempty.
\end{enumerate}
\end{lemma}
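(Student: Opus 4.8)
The plan is to prove the three implications $(1)\Rightarrow(2)\Rightarrow(3)\Rightarrow(1)$, each through its contrapositive, letting the $d$-regularity of $G$ do the essential work of promoting set inclusions to set equalities. The organizing observation is a precise description of $H_d$: it consists of two adjacent vertices $a,b$ together with $d-1$ additional vertices, say $C=\{c_1,\dots,c_{d-1}\}$, each adjacent to both $a$ and $b$. Since $\deg(a)=\deg(b)=d$, the mere presence of these edges already exhausts the degrees of $a$ and $b$, so in any $d$-regular host graph a copy of $H_d$ forces $N(a)=\{b\}\cup C$ and $N(b)=\{a\}\cup C$ \emph{exactly}. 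I will use this rigidity repeatedly.

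For $(1)\Rightarrow(2)$ I suppose (2) fails, so $N(a)\cup\{a\}=N(b)\cup\{b\}$ for some $a\neq b$. Since $a\in N(a)\cup\{a\}=N(b)\cup\{b\}$ and $a\neq b$, we get $a\in N(b)$, i.e.\ $a\sim b$; symmetrically $b\in N(a)$. The common closed neighborhood has exactly $d+1$ elements and contains both $a$ and $b$; deleting these two leaves $d-1$ vertices, each of which lies in both $N(a)$ and $N(b)$ and is therefore a common neighbor. These $d+1$ vertices and their prescribed edges constitute a copy of $H_d$, contradicting (1).

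For $(2)\Rightarrow(3)$, the one genuinely delicate step, I assume (3) fails: there are $a,b$ with $N(a)\neq N(b)$ yet $N(a)\subseteq N(b)\cup\{b\}$. Regularity gives $|N(a)|=|N(b)|=d$ and $|N(b)\cup\{b\}|=d+1$, so $N(a)=(N(b)\cup\{b\})\setminus\{x\}$ for a unique vertex $x$. If $x=b$ then $N(a)=N(b)$, contradicting the hypothesis, so $x\in N(b)$; in particular $b\in N(a)$, whence $a\sim b$ and $a\in N(b)$. The crux is to pin down $x$: because $a\in N(b)\cup\{b\}$ but $a\notin N(a)$ (there are no self-loops), the single vertex removed from $N(b)\cup\{b\}$ to produce $N(a)$ must be $a$ itself, so $x=a$. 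Then $N(a)\cup\{a\}=((N(b)\cup\{b\})\setminus\{a\})\cup\{a\}=N(b)\cup\{b\}$, which is precisely a failure of (2).

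Finally, for $(3)\Rightarrow(1)$ I assume $G$ contains a copy of $H_d$ and read off the neighborhoods from the structural description above: then $b\in N(a)\setminus N(b)$ gives $N(a)\neq N(b)$, while $N(a)=\{b\}\cup C\subseteq\{a,b\}\cup C=N(b)\cup\{b\}$ forces $N(a)\setminus(N(b)\cup\{b\})=\varnothing$, so (3) fails. The main obstacle throughout is the identification $x=a$ in the second implication; the rest is bookkeeping with closed neighborhoods of size $d+1$. I want to stress that $d$-regularity is indispensable everywhere: it is what turns the inclusion in (3) into the statement that exactly one vertex is missing, and what makes the $H_d$ pattern coincide with the full neighborhoods rather than merely sit inside them.
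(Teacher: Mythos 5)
Your proof is correct and follows essentially the same route as the paper's: both hinge on the rigidity that $d$-regularity forces (a copy of $H_d$ exactly exhausts the neighborhoods of its two distal vertices, and an inclusion of $d$-sets into a $(d{+}1)$-set leaves exactly one vertex out), and your key step identifying the missing vertex as $a$ itself in $(2)\Rightarrow(3)$ is the same cardinality argument the paper uses to promote $N(a)\cup\{a\}\subset N(b)\cup\{b\}$ to equality. The only cosmetic difference is that you close the cycle $(1)\Rightarrow(2)\Rightarrow(3)\Rightarrow(1)$ whereas the paper proves the two biconditionals $(1)\Leftrightarrow(2)$ and $(2)\Leftrightarrow(3)$.
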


\begin{proof}
First we argue that (1) and (2) are equivalent.
Suppose $G$ contains a copy of $H_d$.
Simple inspection of Figure~\ref{bad_d} reveals that the two distal vertices---call them $a$ and $b$---satisfy $N(a)\cup\{a\} = N(b)\cup\{b\}$.
Conversely, suppose distinct vertices $a,b\in V$ are such that $N(a)\cup\{a\} = N(b)\cup\{b\}$.
Because $a\neq b$, the assumption $b\in N(a)\cup\{a\}$ implies $b\in N(a)$.
Furthermore, we have 
\eq{
N(a)\setminus\{b\}=(N(a)\cup\{a\})\setminus\{a,b\}=(N(b)\cup\{b\})\setminus\{a,b\}=N(b)\setminus\{a\},
}
meaning $a$ and $b$ share their remaining $d-1$  neighbors.
Therefore, the subgraph of $G$ induced by $N(a)\cup\{a\}$ contains a copy of $H_d$.

Next we show that (2) and (3) are equivalent.
Assuming (2), let us consider $a,b$ such that $N(a)\neq N(b)$.
Suppose toward a contradiction that $N(a)\setminus(N(b)\cup\{b\})$ is empty. 
That is, $N(a) \subset N(b)\cup\{b\}$.
Because $N(a)\neq N(b)$ but $|N(a)|=|N(b)|=d$, we also have $N(a)\setminus N(b)\neq\varnothing$. 
It now follows that $b\in N(a)$, or equivalently $a\in N(b)$.
Hence $N(a)\cup\{a\}\subset N(b)\cup\{b\}$.
Since $|N(a)\cup\{a\}| = |N(b)\cup\{b\}|$, this containment is actually an equality, thereby contradicting (2).

Finally, let us assume (3) and suppose toward a contradiction that $N(a)\cup\{a\}=N(b)\cup\{b\}$ for some $a\neq b$.
In particular, we have $a \in N(b)\setminus N(a)$, implying $N(a)\neq N(b)$.
But now (3) guarantees $N(a)\setminus(N(b)\cup\{b\})$ is nonempty, a clear contradiction to our supposition.
\end{proof}

\subsection{Step 2: combinatorics of compatible moves} \label{pair_neighbors}
In this section and Section~\ref{formal_construction}, we temporarily fix a triple $(a,b,e)$ such that $b\neq a$,  $e\in N(a)$, and $e = b$ if $b\in N(a)$.
Consider the sets
\eq{
\AA &\coloneqq \{(a',a'') : a'\in N(a)\setminus\{e\},a''\in N(a')\}, \qquad
\BB \coloneqq \{(b',e') : b'\in N(b),e'\in N(b')\}.
}
In other words, $\AA$ and $\BB$ encode the possible pairs of choices from Alice and Bob, respectively, described in \ref{alice_choices} and \ref{bob_choices} of Section~\ref{construction_outline}.

\begin{defn} \label{compatible_def}
We say that $(a',a'') \in \AA$ and $(b',e')\in \BB$ are \textit{compatible}, and write $(a',a'')  \vdash(b',e')$, when the following two conditions hold:
\begin{enumerate}[label=\textup{(\roman*)}]
\item $b'\notin\{a',a''\}$; and 
\item if $a''\in N(b')$, then $e' = a''$.
\end{enumerate}
If either of these two conditions fail, we will write $(a',a'')\nvdash(b',e')$.
\end{defn}

For any $\AA_0 \subset \AA$, define the set
\eq{
\cmp(\AA_0) \coloneqq \{(b',e')\in\BB : (a',a'')  \vdash(b',e') \text{ for some $(a',a'')\in\AA_0$}\}.
}
The following result will be essential in allowing us to couple Alice's choice from $\AA$ with Bob's choice from $\BB$.

\begin{lemma} \label{hall_prep_1}
For any $\AA_0\subset\AA$,
\eq{
 \frac{d}{d-1}|\AA_0| \leq |\cmp(\AA_0)|.
}
\end{lemma}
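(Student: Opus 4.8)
The plan is to read this inequality as the feasibility condition for the coupling we ultimately want to build. Since $|\AA| = d(d-1)$ and $|\BB| = d^2$, the ratio $\frac{d}{d-1}$ is exactly $|\BB|/|\AA|$, so the bound $|\cmp(\AA_0)| \ge \frac{d}{d-1}|\AA_0|$ is precisely the Hall-type (transportation) condition guaranteeing a joint law with Alice's uniform marginal on $\AA$, Bob's uniform marginal on $\BB$, and support inside the compatibility relation of Definition~\ref{compatible_def}. Keeping this in mind both motivates the estimate and predicts that it is tight at $\AA_0 = \AA$.

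I would begin with that boundary case, showing $\cmp(\AA) = \BB$. Given $(b',e') \in \BB$, choose $a' \in N(a)\setminus\{e,b'\}$ (possible since $d-1\ge 3$) and seek $a''\in N(a')\setminus\{b'\}$ meeting Definition~\ref{compatible_def}(ii). If $e'\in N(a')$, take $a''=e'$, which is legitimate because $e'\in N(b')$ forces $e'\neq b'$; otherwise $e'\in N(b')\setminus N(a')$ witnesses $N(a')\neq N(b')$, so Lemma~\ref{bad_subgraph_restated}(3) supplies some $a''\in N(a')\setminus(N(b')\cup\{b'\})$, compatible through the \emph{free} branch of (ii). This already isolates the two mechanisms the general argument must exploit --- the forced move $e'=a''$ and the free move $a''\notin N(b')\cup\{b'\}$ --- and pins down exactly where the $H_d$-free hypothesis enters.

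For general $\AA_0$ I would decompose by Alice's first coordinate: for each $a'\in N(a)\setminus\{e\}$ put $S_{a'}=\{a'':(a',a'')\in\AA_0\}$ and $k_{a'}=|S_{a'}|$, so $|\AA_0|=\sum_{a'}k_{a'}$ with $0\le k_{a'}\le d$ and at most $d-1$ nonzero terms. Writing $c(b')=|\{e':(b',e')\in\cmp(\AA_0)\}|$ so that $|\cmp(\AA_0)|=\sum_{b'\in N(b)}c(b')$, I would first show that a \emph{fully} occupied column ($k_{a'}=d$) forces $c(b')=d$ for every $b'\in N(b)\setminus\{a'\}$: when $N(a')\neq N(b')$ the free second step of Lemma~\ref{bad_subgraph_restated}(3) opens the entire fibre, while when $N(a')=N(b')$ the forced route $e'=a''$ already sweeps $a''$ across all of $N(b')$. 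Summing over the active columns and using that their number is at most $d-1$ yields the inequality comfortably in the all-full case.

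The genuinely delicate case is that of partially occupied columns $0<k_{a'}<d$, where a single column need not open a full fibre, so one must track the exact multiplicity of compatible $e'$ contributed at each $b'$ and control the overlap between columns (the same $(b',e')$ can be compatible with Alice-pairs from several columns, so a naive per-column sum overcounts). I expect this bookkeeping --- quantifying how many \emph{new} compatible pairs a partial $S_{a'}$ genuinely contributes, and checking that the slack factor $\frac{d}{d-1}$ survives the worst mixture of full and partial columns --- to be the main obstacle, with the two compatibility branches above and Lemma~\ref{bad_subgraph_restated}(3) as the levers to force it through. An equivalent, arguably cleaner packaging is to construct an explicit fractional matching sending total weight $\frac{d}{d-1}$ out of each element of $\AA$ and at most $1$ into each element of $\BB$ along compatible pairs, whence the inequality follows by a one-line charging argument; that construction meets the same difficulty precisely at the partial columns.
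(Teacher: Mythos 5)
Your setup is sound: the identification of $\tfrac{d}{d-1}$ as $|\BB|/|\AA|$, the verification that $\cmp(\AA)=\BB$ via the two branches of compatibility (the forced move $e'=a''$ when $e'\in N(a')$, and the free move supplied by Lemma~\ref{bad_subgraph_restated}(3) when $N(a')\neq N(b')$), and the observation that the $H_d$-free hypothesis enters exactly there are all correct and match the levers the paper uses. But the proof has a genuine gap, and you name it yourself: the entire case of partially occupied columns --- which is where essentially all of the work lies --- is left as an expected ``main obstacle'' rather than resolved. Identifying where the difficulty sits is not the same as overcoming it, and the extremal case $\AA_0=\AA$ plus the all-full-columns case together cover only a sliver of the subsets $\AA_0$ for which the inequality must hold.

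The paper's proof also suggests that your choice of decomposition is working against you. You slice $\AA_0$ by Alice's \emph{first} coordinate $a'$, which immediately creates the overlap problem you flag (a single $(b',e')$ can be compatible with pairs from many columns, so per-column counts overcount). The paper instead slices by the \emph{second} coordinate $a''$: it first proves a reduction (Claim~\ref{hall_prep_claim}) showing that one may assume $\AA_0$ is closed under changing the first coordinate --- i.e., $a''\in\AA_2$ and $(a',a'')\in\AA$ imply $(a',a'')\in\AA_0$ --- because enlarging $\AA_0$ by one such pair adds at most one element to $\cmp(\AA_0)$. With that closure in hand it splits on $|\AA_2|$ (the number of distinct second coordinates): for $|\AA_2|\geq d+2$ a pigeonhole argument gives $\cmp(\AA_0)=\BB$ outright; for $|\AA_2|=d+1$ it pins down $\AA_2=N(b_1)\cup\{b_1\}$ and runs a direct count; and for $|\AA_2|\leq d$ it builds, via a careful triple enumeration, sets $\BB^k\subset\cmp(\AA_0)$ of the form $\{(b_j,b_j^k)\}$ that are \emph{automatically pairwise disjoint} because each fixes the index $k$ of the second coordinate --- which is precisely the disjointness your column decomposition lacks. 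Even that case requires three further sub-cases keyed to how many fibres $\AA_0^k$ are full, with Lemma~\ref{bad_subgraph_restated} invoked again to rescue the full fibres with $a^k\in N(b)$. To complete your argument you would need either to reproduce something like the closure reduction and the second-coordinate bookkeeping, or to exhibit the fractional matching you allude to explicitly; as written, the inequality is not established.
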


Before proving the lemma, we make a simplifying claim.

\begin{claim} \label{hall_prep_claim}
Suppose $(a'_1,a'') \in \AA_0\subset\AA$ and $(a'_2,a'')\in\AA$.
If $\AA_1 = \AA_0 \cup \{(a'_2,a'')\}$, then 
\eeq{ \label{claimed_implication}
 \frac{d}{d-1}|\AA_1| \leq |\cmp(\AA_1)| \quad \implies \quad  \frac{d}{d-1}|\AA_0| \leq |\cmp(\AA_0)|.
}
\end{claim}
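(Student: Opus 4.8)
The plan is to prove the implication by controlling how much $\cmp(\cdot)$ can grow when the single pair $(a_2',a'')$ is adjoined to $\AA_0$. First I would dispose of the trivial case: if $(a_2',a'')\in\AA_0$ then $\AA_1=\AA_0$ and there is nothing to show, so I may assume $(a_2',a'')\notin\AA_0$, whence $a_1'\neq a_2'$ and $|\AA_1|=|\AA_0|+1$. Under this assumption the hypothesis reads $|\cmp(\AA_1)|\geq\frac{d}{d-1}|\AA_1|=\frac{d}{d-1}|\AA_0|+\frac{d}{d-1}$, and since $\frac{d}{d-1}\geq1$, it suffices to establish the bound
\[
|\cmp(\AA_1)|\leq|\cmp(\AA_0)|+1,
\]
that is, that adjoining $(a_2',a'')$ creates at most one new compatible pair in $\BB$.

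The central step is to pin down an arbitrary $(b',e')\in\cmp(\AA_1)\setminus\cmp(\AA_0)$. Such a pair is compatible with the freshly added $(a_2',a'')$ but with no member of $\AA_0$; in particular $(a_1',a'')\nvdash(b',e')$. Here I would exploit that $(a_1',a'')$ and $(a_2',a'')$ share the same second coordinate $a''$: the clause $b'\neq a''$ from part (i) and all of part (ii) of Definition~\ref{compatible_def} read identically whether tested against $(a_1',a'')$ or $(a_2',a'')$. Hence the only clause that could distinguish the two, and thus the only possible source of incompatibility with $(a_1',a'')$ that is not shared with $(a_2',a'')$, is the remaining part of (i), forcing $b'=a_1'$.

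It then remains to observe that $b'=a_1'$ forces $(b',e')=(a_1',a'')$. Indeed, membership $(a_1',a'')\in\AA$ means precisely that $a''\in N(a_1')=N(b')$, so condition (ii) of Definition~\ref{compatible_def}, applied to the compatible pair $(a_2',a'')$, compels $e'=a''$. Thus $\cmp(\AA_1)\setminus\cmp(\AA_0)\subseteq\{(a_1',a'')\}$, which gives the displayed bound and hence
\[
|\cmp(\AA_0)|\geq|\cmp(\AA_1)|-1\geq\tfrac{d}{d-1}|\AA_0|+\tfrac{d}{d-1}-1\geq\tfrac{d}{d-1}|\AA_0|.
\]
The step I expect to require the most care is the implication ``$b'=a_1'\Rightarrow e'=a''$'': it relies on the built-in feature of $\AA$ that a pair's second coordinate is always a neighbor of its first, so that setting $b'=a_1'$ automatically places $a''$ in $N(b')$ and thereby activates the otherwise-vacuous adjacency clause of compatibility. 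This reduction is exactly what will later let me assume, in proving Lemma~\ref{hall_prep_1}, that $\AA_0$ already contains every admissible first coordinate for each second coordinate it uses.
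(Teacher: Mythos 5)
Your proof is correct and follows essentially the same route as the paper's: reduce to the nontrivial case, show that any $(b',e')\in\cmp(\AA_1)\setminus\cmp(\AA_0)$ must satisfy $b'=a_1'$ (since the two $\AA$-pairs share the second coordinate $a''$, only the $b'\neq a_1'$ versus $b'\neq a_2'$ clause can distinguish them) and hence $e'=a''$, so that $|\cmp(\AA_1)|\leq|\cmp(\AA_0)|+1$. Your concluding additive estimate using $\tfrac{d}{d-1}\geq 1$ is a slightly more direct way to finish than the paper's ratio manipulation, but the substance is identical.
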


\begin{proof}
If $(a_2',a'') \in \AA_0$, then $\AA_1=\AA_0$ and the claim is trivial.
Therefore, let us assume henceforth $(a_2',a'') \notin \AA_0$ so that $|\AA_1| = |\AA_0|+1$.
Suppose $\cmp(\AA_1)\setminus\cmp(\AA_0)$ contains some $(b',e')$.
In particular, $(a_1',a'') \nvdash(b',e')$ but $(a_2',a'')\vdash(b',e')$.
Observe that
\eq{ %\label{implication_1}
(a_2',a'')\vdash(b',e') \quad \implies \quad \text{$b'\notin\{a_2',a''\}$ and $a'' \notin N(b')\setminus\{e'\}$}.
}
Meanwhile,
\eq{ %\label{implication_2}
(a_1',a'')\nvdash(b',e') \quad \implies \quad \text{$b' \in \{a_1',a''\}$ or $a'' \in N(b')\setminus\{e'\}$}.
}
Viewed together, these two implications allow just one possibility: $b' = a_1'$, and therefore $a''\in N(a_1') = N(b')$ so that $a''$ must be $e'$.
We have thus argued that $\cmp(\AA_1)\setminus\cmp(\AA_0)$ contains at most one element.
Consequently, if
\eq{
1\leq \frac{d}{d-1} \leq \frac{|\cmp(\AA_1)|}{|\AA_1|},
}
then
\eq{
\frac{|\cmp(\AA_1)|}{|\AA_1|} \leq \frac{|\cmp(\AA_1)|-1}{|\AA_1|-1} = \frac{|\cmp(\AA_1)|-1}{|\AA_0|} \leq \frac{|\cmp(\AA_0)|}{|\AA_0|}.
}
The claimed implication \eqref{claimed_implication} is now evident.
\end{proof}

\begin{proof}[Proof of Lemma~\ref{hall_prep_1}]
Given $\AA_0\subset\AA$, we begin by defining the set
\eq{
\AA_2 \coloneqq \{a''\in V : (a',a'')\in \AA_0 \text{ for some $a'\in N(a)$}\}.
}
For the purpose of proving the lemma, we may assume by Claim~\ref{hall_prep_claim} that
\eeq{ \label{inclusion_assumption}
a'' \in \AA_2, (a',a'')\in\AA \quad \implies \quad (a',a'')\in\AA_0.
}

{\bf Case 1: $|\AA_2| \geq d+2$.}
Consider any $b' \in N(b)$.
Because $|N(b')| = d$, the set $\AA_2\setminus(N(b')\cup\{b'\})$ contains some $a''$. %\in \AA_2\setminus(N(b')\cup\{b'\})$.
By definition of $\AA_2$, there is some $a'\in N(a)$ for which $(a',a'')\in\AA_0$.
Because $a''\in N(a')\setminus (N(b')\cup\{b'\})$, we must have $a'\neq b'$ and $a''\neq b'$.
Consequently, $(a',a'')\vdash(b',e')$ for every $e'\in N(b')$.
We have thus argued that $\cmp(\AA_0) = \BB$, and so the claim holds trivially:
\eeq{ \label{trivial_whole}
\frac{d}{d-1}|\AA_0| \leq \frac{d}{d-1}|\AA| = d^2 = |\BB|.
}

{\bf Case 2: $|\AA_2| = d+1$.}
If $\cmp(\AA_0)$ is all of $\BB$, then we are done by \eqref{trivial_whole}.
Otherwise, there is some $(b_1,e_1) \in \BB\setminus\cmp(\AA_0)$, meaning the following implication is true:
\eeq{ \label{not_compatible_consequence}
(a',a'') \in \AA_0 \quad &\implies \quad b_1\in\{a',a''\} \text{ or } a''\in N(b_1)\setminus\{e_1\} \\
&\implies \quad a'' \in \{b_1\} \cup N(b_1).
}
This shows that $\AA_2 \subset \{b_1\} \cup N(b_1)$, but since $|\AA_2| = d+1$, we must actually have 
\eeq{ \label{A2_equality}
\AA_2 = \{b_1\} \cup N(b_1).
}
In particular, there is some $a_1\in N(a)$ so that $(a_1,e_1)\in \AA_0$.
In light of \eqref{not_compatible_consequence}, though, we can only have $a_1= b_1$.
(In particular, $b_1 \in N(a)$ and $b_1\neq e$.)
It thus follows from \eqref{inclusion_assumption} that 
\eeq{ \label{not_in_neighborhood}
e_1 \notin N(a') \quad \text{for any $a'\in N(a)\setminus\{b_1,e\}$},
}
since otherwise $\cmp(\AA_0)$ would contain $(b_1,e_1)$.
We claim that, as a consequence,
\eeq{ \label{lost_one}
|N(a')\cap\AA_2| \leq d-1 \quad \text{for any $a'\in N(a)\setminus\{b_1,e\}$}.
}
Indeed, because $\AA_2 = N(b_1)\cup\{b_1\}$, we have
\eq{
N(a')\cap \AA_2 \subset (N(b_1)\setminus\{e_1\})\cup\{b_1\}.
}
The right-hand side above has cardinality $d$.
So if $|N(a')\cap \AA_2|$ were at least $d$, then the above containment would be an equality:
\eeq{ \label{bad_equality}
N(a')=N(a')\cap\AA_2=(N(b_1)\setminus\{e_1\})\cup\{b_1\}.
}
This would in turn imply $b_1\in N(a')$ and hence $a'\in N(b_1)$.
But clearly $a'\notin N(a')$, and so we are left to conclude from \eqref{bad_equality} that $a'=e_1$.
This yields a contradiction to Lemma~\ref{bad_subgraph_restated}, since now \eqref{bad_equality} shows
\eq{
N(a')\cup\{a'\} = N(a')\cup\{e_1\} = N(b_1) \cup \{b_1\}.
}
To avoid this contradiction, \eqref{lost_one} must hold, which means
\eeq{ \label{A_bound_1}
(b_1,e_1)\in \BB\setminus\cmp(\AA_0) \quad \implies \quad |\AA_0| &\stackref{inclusion_assumption}{=} \sum_{a'\in N(a)\setminus\{e\}} |N(a')\cap\AA_2| \\
&\stackref{A2_equality}{=} |N(b_1)| + \sum_{a'\in N(a)\setminus\{b_1,e\}} |N(a')\cap\AA_2| \\
&\stackrefp{A2_equality}{\leq}  d + (d-2)(d-1).
}

More generally, suppose that $(b_1,e_1),(b_1,e_2),\cdots,(b_1,e_\ell) \in \BB\setminus\cmp(\AA_0)$, where $\ell\geq2$ and $e_1,\dots,e_\ell$ are all distinct.
By the same argument as the one leading to \eqref{not_in_neighborhood}, we have $e_1,\dots,e_\ell\notin N(a')$ for every $a' \in N(a)\setminus\{b_1,e\}$.
Since $\AA_2 = N(b_1)\cup\{b_1\}$, this observation shows
\begin{align}
N(a')\cap \AA_2 &\subset (N(b_1)\setminus\{e_1,\dots,e_\ell\})\cup\{b_1\} \notag \\
\quad \implies \quad |N(a')\cap \AA_2| &\leq |(N(b_1)\setminus\{e_1,\dots,e_\ell\})\cup\{b_1\}
| = d-\ell+1. \label{lost_at_least_one}
\end{align}
The resulting bound on $|\AA_0|$ is now
\eq{ %\label{A_bound_2}
|\AA_0| %&\stackref{inclusion_assumption}{=}  \sum_{a'\in N(a)\setminus\{e\}} |N(a')\cap\AA_2| \\
%&\stackrefp{inclusion_assumption}{=} 
&= |N(b_1)| + \sum_{a'\in N(a)\setminus\{b_1,e\}} |N(a')\cap\AA_2|
\leq  d + (d-2)(d-\ell+1).
}
Note that $\ell=2$ yields the same bound as \eqref{A_bound_1}, and so we can write the single statement
\eeq{ \label{A_bound_2}
(b_1,e_1),\dots,(b_1,e_\ell) \in \BB\setminus\cmp(\AA_0) \quad \implies \quad |\AA_0| \leq d + (d-2)(d-(\ell \vee 2)+1).
}

We now separately compute $|\cmp(\AA_0)|$.
Let $\ell$ be the maximum integer such that there are distinct $e_1,\dots,e_\ell\in N(b_1)$ for which
\eeq{ \label{not_in_cmp}
(b_1,e_1),\dots,(b_1,e_\ell) \in \BB\setminus\cmp(\AA_0). %\quad \implies \quad |\cmp(\AA_0)| \leq d^2-\ell.
}
Consider any $b'\in N(b)\setminus\{b_1\}$ and any $e'\in N(b')$.
Because of our earlier deduction in \eqref{A2_equality} that $\AA_2 = N(b_1)\cup\{b_1\}$, where $b_1\in N(a)\setminus\{e\}$, the assumption \eqref{inclusion_assumption} forces $(b_1,a'')\in\AA_0$ for every $a''\in N(b_1)$.
In particular, if $e'\in N(b_1)$, then $(b_1,e')\in\AA_0$, and so
\eq{
b' \notin \{b_1,e'\} \quad \implies \quad
%(b_1,e')\in\AA_0, 
(b_1,e')\vdash(b',e') \quad \implies \quad (b',e') \in \cmp(\AA_0).
}
If instead $e'\notin N(b_1)$, then $N(b_1)\neq N(b')$.
In this case, Lemma~\ref{bad_subgraph_restated} tells us that 
$N(b_1)\setminus N(b')$ contains some $a''\neq b'$, and so
%Once more, \eqref{inclusion_assumption} forces $(b_1,a'')\in\AA_0$, and so
\eq{
b'\notin\{b_1,a''\}, a''\notin N(b') \quad \implies \quad %(b_1,a'')\in\AA_0, 
(b_1,a'')\vdash (b',e') \quad \implies \quad (b',e') \in \cmp(\AA_0).
}
We have thus shown that $\cmp(\AA_0)$ contains every $(b',e')\in\BB$ for which $b'\neq b_1$.
By our choice of $\ell$, we now have
\eq{
\cmp(\AA_0) = \BB\setminus\{(b_1,e_1),\dots,(b_1,e_\ell)\} \quad \implies \quad |\cmp(\AA_0)| = d^2-\ell \geq d^2-(\ell\vee2).
}
%Because we are interested in only a lower bound for $\cmp(\AA_0)$, and \eqref{A_bound_2} with $\ell=2$ produces the same upper bound for $|\AA_0|$ as \eqref{A_bound_1}, we may assume henceforth that $\ell\geq2$.
Pairing this lower bound for $|\cmp(\AA_0)|$ with the upper bound \eqref{A_bound_2} for $|\AA_0|$---and assuming $\ell\geq2$ in order to simplify notation---we have
\eeq{ \label{raw_bound}
\frac{|\cmp(\AA_0)|}{|\AA_0|} \geq \frac{d^2-\ell}{d+(d-2)(d-\ell+1)} = \frac{d^2-\ell}{d^2-\ell d + 2\ell - 2}.
}
Observe that
\eq{
\ell\geq 2 \quad \implies \quad \frac{3\ell-2}{\ell-1} \leq 4 \leq d,
}
and so
\eeq{ \label{ingredient}
(d^2-\ell d + 2\ell - 2)d &= d^3 - \ell d^2 + 2\ell d - 2d \\
&= d^3 - d^2 - \ell d - (\ell-1)d^2 + (3\ell-2)d \\
&\leq d^3 - d^2 - \ell d 
\leq d^3 - d^2 - \ell d + \ell = (d^2-\ell)(d-1).
}
Together, \eqref{raw_bound} and \eqref{ingredient} produce the desired inequality:
\eq{
\frac{|\cmp(\AA_0)|}{|\AA_0|} \geq \frac{d}{d-1}.
}

{\bf Case 3: $|\AA_2| \leq d$.}
We will handle this final case by eventually splitting our argument along three sub-cases, to be specified later.
To begin, we enumerate the elements of $N(a)$ as $a_1,\dots,a_{d-1},a_d=e$, and then correspondingly label the elements of $N(b)$ as $b_1,\dots,b_d$ in such a way that
\begin{subequations}
\label{label_conditions}
\begin{align} \label{label_1}
b_j=a_i \in N(a) \quad \implies \quad j=i.
\end{align}
Next we enumerate the elements of $\AA_2$ as $a^1,\dots,a^K$, where $K \leq d$, such that
\begin{align} \label{label_2}
a^k = b_j \in N(b) \quad \implies \quad k=j.
\end{align}
Finally, we enumerate the elements of each $N(b_j)$ as $b_j^1,\dots,b_j^d$, such that
\begin{align} \label{label_3}
b_j^\ell = a^k \in \AA_2 \quad \implies \quad \ell=k.
\end{align}
\end{subequations}
%(Notice that this last step is not possible if $K > d$).
We will now use these enumeration schemes to directly construct a subset of $\cmp(\AA_0)$ large enough to satisfy the lemma's claim.

In light of \eqref{inclusion_assumption}, the set $\AA_0$ can be expressed as the disjoint union
\eeq{ \label{disjoint_rep}
\AA_0 = \biguplus_{k=1}^{d} \AA_0^k, 
}
where
\eq{
\AA_0^k \coloneqq 
\begin{cases} \{(a_i,a^k) : a_i \in N(a)\cap N(a^k), 1 \leq i \leq d-1\} &\text{if }1\leq k\leq K, \\
\varnothing &\text{if }K<k\leq d.
\end{cases}
}
(Without \eqref{inclusion_assumption}, $\AA_0$ would only be a subset of the union in \eqref{disjoint_rep}.)
Note that $|\AA_0^k|\leq d-1$ for each $k$, which implies
\eeq{ \label{proportions}
|\AA_0^k| + 1 \geq |\AA_0^k| + \frac{1}{d-1}|\AA_0^k| = \frac{d}{d-1}|\AA_0^k|.
}
For each $(a_i,a^k) \in \AA_0^k$, we claim that %\eqref{label_conditions} implies
\eeq{
(a_i,a^k) \vdash (b_j,b_j^k) \quad \text{for all $j \neq i,k$}. \label{avoid_i_k}
}
Indeed, when, \eqref{label_1} guarantees $b_j \neq a_i$ when $j\neq i$, \eqref{label_2} guarantees $b_j \neq a^k$ when $j\neq k$, and \eqref{label_3} ensures $b_j^k = a^k$ if $a^k \in N(b_j)$.

Our goal is to construct subsets $\BB^1,\dots,\BB^{d} \subset \BB$ such that
\eeq{ \label{B_condition}
\BB^k \subset \cmp(\AA_0) \cap \{(b_j,b_j^k) : 1 \leq j \leq d\} \quad \text{for each $k=1,\dots,d$}.
}
Such subsets are automatically pairwise disjoint, since
\eq{
(b_j,b_j^k) = (b_{j'},b_{j'}^{\ell}) \quad \implies \quad
 \begin{cases}
 b_j = b_{j'}\ \implies \ j = j' \\
 b_j^k = b_{j'}^\ell
\end{cases}
 \implies \quad b_j^k = b_j^\ell \quad \implies \quad k = \ell.
}
Therefore, we will ultimately have
\eeq{ \label{disjoint_counting}
|\cmp(\AA_0)| \geq \Big|\bigcup_{k=1}^d \BB^k\Big|
= \sum_{k=1}^d |\BB^k|.
}
Furthermore, the sets $\BB^1,\dots,\BB^d$ we identify will have the property that
\eeq{ \label{sum_condition}
 \sum_{k=1}^d |\BB^k| \geq K + \sum_{k=1}^K |\AA_0^k|, 
}
from which the lemma's claim follows:
\eq{
|\cmp(\AA_0)| \stackref{disjoint_counting}{\geq} \sum_{k=1}^d |\BB^k|
\stackref{sum_condition}{\geq} \sum_{k=1}^K (|\AA_0^k|+1) 
\stackref{proportions}{\geq} \frac{d}{d-1}\sum_{k=1}^K |\AA_0^k|
\stackref{disjoint_rep}{=} \frac{d}{d-1}|\AA_0|.
}
We are thus left only with the task of exhibiting $\BB^1,\dots,\BB^d$ satisfying \eqref{B_condition} and \eqref{sum_condition}.
Our definition of $\BB^k$ will depend on the cardinality of $\AA_0^k$. \\

\noindent \underline{$|\AA_0^k| = 0$}:
Here $K<k\leq d$, and we simply set
\eeq{
\label{B_0}
\BB^k=\varnothing.
}

\noindent \underline{$|\AA_0^k| = 1$}:
%Let us temporarily fix the value of $k$.
%It will pose no loss of generality to assume $\AA_0^k = \{(a_1,a^k),\dots,(a_\ell,a^k)\}$.
Say $\AA_0^k = \{(a_i,a^k)\}$; then set 
\eeq{ \label{B_1}
\BB^k = \{(b_j,b_j^k) : 1\leq j\leq d, j\neq i,k\}.
}
The observation \eqref{avoid_i_k} shows $\BB^k \subset \cmp(\AA_0^k)$, and we note
\eeq{ \label{ineq_1}
|\BB^k| = d-2 \geq 2 = |\AA_0^k| + 1.
}
%and the above discussion shows $\BB^k \subset \cmp(\AA_0^k)$. \\

\noindent \underline{$2\leq |\AA_0^k| \leq d-2$}: Say $\AA_0^k \supset \{(a_{i_1},a^k),(a_{i_2},a^k)\}$ with $i_1\neq i_2$.
As any $j$ is equal to at most one of $i_1$ and $i_2$, it follows from \eqref{avoid_i_k} that $(b_j,b_j^k) \in \cmp(\AA_0^k)$ for every $j\neq k$. 
So setting 
\eeq{ \label{B_2}
\BB^k = \{(b_j,b_j^k) : 1\leq j \leq d, j \neq k\}
} 
again results in  $\BB^k \subset \cmp(\AA_0^k)$, and now
\eeq{ \label{ineq_2}
|\BB^k| = d-1 \geq |\AA_0^k| + 1.
}

\noindent \underline{$|\AA_0^k| = d-1$}: 
Our definition of $\BB^k$ when $|\AA_0^k| = d-1$ will depend on which of the three sub-cases below we find ourselves in.
%Will will ultimately appeal to the fact the following implication is true:
%\eeq{ \label{key_implication}
%a^k \in N(b) \quad \implies \quad \text{there is some $j = j_k\neq k$ for which $a^k\notin N(b_{j_k})$}.
%}
%Indeed, if $a^k \in N(b)$, then \eqref{label_2} forces $a^k = b_k$.
%If the conclusion of the above display were false, then $N(a^k) \supset \{b_j : j\neq k\}$, which in turn gives
%\eq{
%N(a^k) \cup\{a^k\} \supset \{b\} \cup \{b_j : j\neq k\} \cup \{b_k\} = N(b) \cup \{b\} \quad \implies \quad N(a^k)\cup\{a^k\} = N(b) \cup\{b\}.
%}
%As this conclusion violates Lemma~\ref{bad_subgraph_restated}, we have proved that \eqref{key_implication} is a true statement.
%In any case, because $|\AA_0^k| \geq 2$, 
In any circumstance, however, the same logic as above yields
\eeq{ \label{more_than_1_consequence}
\{(b_j,b_j^k) : 1\leq j \leq d, j \neq k\} \subset \cmp(\AA_0^k).
}
If $a^k\notin N(b)$, then we have the additional guarantee that $a^k\neq b_k$.
So by taking any $i\neq k$, $1\leq i\leq d-1$, we see that
\eq{
|\AA_0^k| = d-1 \quad \implies \quad \AA_0^k = N(a)\setminus\{e\} \quad 
&\stackrel{\phantom{\mbox{\footnotesize\eqref{label_1},\eqref{label_3}}}}{\implies} \quad (a_i,a^k) \in \AA_0^k \\ &\stackrel{\mbox{\footnotesize\eqref{label_1},\eqref{label_3}}}{\implies} \quad (b_k,b_k^k) \in \cmp(\AA_0^k).
}
In this case, we can improve upon \eqref{more_than_1_consequence}:
\eeq{ \label{not_neighbor_consequence}
\{(b_j,b_j^k) : 1 \leq j\leq d\} \subset \cmp(\AA_0^k) \quad \text{whenever $|\AA_0^k| = d-1$, $a^k\notin N(b)$}.
}
On the other hand, if $a^k \in N(b)$, then we will appeal to the following claim:
\eeq{ \label{key_implication}
a^k \in N(b) \quad \implies \quad \text{there is some $j \neq k$ for which $a^k\notin N(b_{j})$}.
}
To verify this claim, let us suppose $a^k \in N(b)$.
Note that \eqref{label_2} forces $a^k = b_k$.
If the conclusion of \eqref{key_implication} were false, then $N(a^k) \supset \{b_j : j\neq k\}$, which in turn gives
\eq{
N(a^k) \cup\{a^k\} \supset \{b\} \cup \{b_j : j\neq k\} \cup \{b_k\} = N(b) \cup \{b\} \implies N(a^k)\cup\{a^k\} = N(b) \cup\{b\}.
}
As this possibility violates Lemma~\ref{bad_subgraph_restated}, we have proved \eqref{key_implication}. \\

%If instead (1) is false but (2) is true, then %$a^k = b_k$ by \eqref{label_2}, and so 
%$j_k \neq k$ implies $b_{j_k} \neq a^k$ by \eqref{label_2}, and so
%\eq{
%a^k \notin N(b_{j_k}) \quad \implies \quad \{(b_{j_k},e') : e'\in N(b_{j_k})\} \subset \cmp(\AA_0^k),
%}
%in addition to \eqref{more_than_1_consequence}. \\

{\it Case 3a: $|\AA_0^k|\leq d-2$ for all values of $k$.}
In this first possibility, $\BB^1,\dots,\BB^d$ are all defined via \eqref{B_0}, \eqref{B_1}, or \eqref{B_2}.
Consequently, \eqref{sum_condition} is immediate from \eqref{ineq_1} and \eqref{ineq_2}. \\
%In either case, because $|\AA_0^k| \leq d-1$, the appropriate inequality \eqref{ineq_1} or \eqref{ineq_2} shows
%\eq{
%\frac{|\BB^k|}{|\AA_0^k|}\geq \frac{d}{d-1}.
%}
%Hence
%\eq{
%|\cmp(\AA_0)| %\stackref{Bk_4}{\geq} \Big|\bigcup_{k=1}^K \BB^k\Big|
%\stackref{disjoint_counting}{=} \sum_{k=1}^K |\BB^k|
%\geq \frac{d}{d-1}\sum_{k=1}^K |\AA_0^k|
%\stackref{disjoint_rep}{=} \frac{d}{d-1}|\AA_0|,
%}
%as desired. \\

{\it Case 3b: $|\AA_0^k|=d-1$ for exactly one value of $k$.}
Suppose $|\AA_0^{k^*}| = d-1$ and $|\AA_0^k| \leq d-2$ for $k\neq k^*$.
For each $k\neq k^*$, the set $\BB^k$ is defined via \eqref{B_0}, \eqref{B_1}, or \eqref{B_2}.
%Note that
%\eeq{ \label{absolute_size}
%|\AA_0| \stackref{disjoint_rep}{=} \sum_{k=1}^K |\AA_0^k| \leq d-1 + (d-1)(d-2) = (d-1)^2.
%}
If $a^{k^*} \notin N(b)$, then \eqref{not_neighbor_consequence} allows us to set 
\eq{
\BB^{k^*} = \{(b_j,b_j^{k^*}): 1 \leq j \leq d\},
} 
in which case
\eq{
|\BB^{k^*}| = d = |\AA_0^{k^*}|+1.
}
This relation, combined with \eqref{ineq_1} and \eqref{ineq_2}, leads to \eqref{sum_condition}.

On the other hand, even if $a^{k^*} \in N(b)$, \eqref{more_than_1_consequence} allows us to at least take
\eq{
\BB^{k^*} = \{(b_j,b_j^{k^*}): 1 \leq j \leq d, j\neq k^*\},
}
so that
\eeq{ \label{ineq_3}
|\BB^{k^*}| = d-1 = |\AA_0^{k^*}|.
}
Furthermore, \eqref{key_implication} gives the existence of some $j \neq k^*$ such that $a^{k^*}\notin N(b_{j})$.
Now take any $i\neq j$, $1 \leq i\leq d-1$.
Because $|\AA_0^{k^*}|=d-1$, $\AA_0$ necessarily contains $(a_i,a^{k^*})$.
Moreover, $a_i$ is not equal to $b_j$ because of \eqref{label_1}, and $a^{k^*}$ is not equal to $b_j$ because of \eqref{label_2}.
Consequently, for every $k=1,\dots,d$,
\eq{
(a_i,a^{k^*})\vdash (b_j,b_j^k) \quad \implies \quad (b_j,b_j^k)\in\cmp(\AA_0).
}
In particular, $(b_j,b_j^j)\in\cmp(\AA_0)$.
Now notice that because $\BB^j$ was defined via \eqref{B_0}, \eqref{B_1}, or \eqref{B_2}, we currently have $(b_j,b_j^j) \notin \BB^j$.
Therefore, adding $(b_j,b_j^j)$ to $\BB^j$ results in
\eq{
|\BB^j| \geq |\AA_0^j| + 2.
}
This new relation, combined with \eqref{ineq_1}, \eqref{ineq_2}, and \eqref{ineq_3}, again leads to \eqref{sum_condition}. \\

%By \eqref{more_than_1_consequence}, we can take $\BB^1 = \{(b_j,b_j^1): 2 \leq j \leq d\}$ and still have
%\eq{
%|\cmp(\AA_0)| %\stackref{Bk_4}{\geq} \Big|\bigcup_{k=1}^K \BB^k\Big|
%\stackref{disjoint_counting}{=} \sum_{k=1}^K |\BB^k|
%&\stackrefp{disjoint_rep}{=} |\AA_0^1| + \sum_{k=2}^{K} (|\AA_0^k|+1) \\
%&\stackref{disjoint_rep}{=} |\AA_0| + K-1 
%\stackref{absolute_size}{\geq} |\AA_0| + \frac{1}{d-1}|\AA_0| 
%= \frac{d}{d-1}|\AA_0|.
%}

{\it Case 3c: $|\AA_0^k|=d-1$ for more than one value of $k$.}
Consider any $k$ such that $|\AA_0^{k}|=d-1$; in particular, $\{a_1,\dots,a_{d-1}\} \subset N(a^k)$.
As in the previous case, if $a^k \notin N(b)$, then \eqref{not_neighbor_consequence} allows us to take
\eeq{ \label{best_scenario}
\BB^k = \{(b_j,b_j^k) : 1 \leq j \leq d\},
}
so that
\eeq{ \label{plus_1}
|\BB^k| = d = |\AA_0^k| + 1.
}
If instead $a^k \in N(b)$, meaning $a^k=b_k$ by \eqref{label_2}, then we take $\ell\neq k$ such that $|\AA_0^{\ell}| = d-1$.
Since $\{a_1,\dots,a_{d-1}\} \subset N(a^\ell)$, the two sets $N(a^k)$ and $N(a^\ell)$ have $d-1$ common elements.
Hence $a^k$ and $a^\ell$ are not themselves adjacent, for otherwise $G$ would contain a copy of $H_d$.
Now take any $i\neq k$, $1\leq i \leq d-1$.
Again because $|\AA_0^{\ell}| = d-1$, we have $(a_i,a^\ell)\in\AA_0$.
Moreover, $b_k$ is not equal to $a_i$ because of \eqref{label_1}, $b_k=a^k$ is clearly not equal to $a^\ell$, and we have just reasoned that $a^\ell$ is not adjacent to $a^k=b_k$.
Consequently,
\eq{
(a_i,a^\ell)\vdash (b_k,b_k^j) \quad \text{for any $j=1,\dots,d$} \quad &\stackrefp{more_than_1_consequence}{\implies} \quad (b_k,b_k^k) \in \cmp(\AA_0) \\
\quad &\stackref{more_than_1_consequence}{\implies} \quad \{(b_j,b_j^k) : 1 \leq j\leq d\} \subset \cmp(\AA_0^k).
}
Therefore, it is still possible to take $\BB^k$ as in \eqref{best_scenario}, making \eqref{plus_1} nonetheless valid.
The combination of \eqref{ineq_1}, \eqref{ineq_2}, and \eqref{plus_1} yields \eqref{sum_condition} once more.
\end{proof}

\subsection{Step 3: construction of coupling} \label{formal_construction}
We can now give a precise construction of the coupling outlined in Section~\ref{construction_outline}.
Alice's initial position $A_0$ can be any vertex; let $E_1$ be a uniformly random element of $N(A_0)$.
Bob's initial position $B_0$ can be $E_1$ or any vertex not belonging to $N(A_0)\cup\{A_0\}$.
We then make the following inductive assumptions: 
\begin{subequations}
\label{even_induction}
\begin{align}
%&\phantom{t\geq0 \text{ is even} \qquad \implies \qquad} \label{even_1}
%\P\givenp{E_{t+1}=a'}{A_{[0,t]}} = \frac{1}{d}\one_{\{a'\in N(A_t)\}} \\ 
\label{even_2}
&\phantom{t\equiv 0 \mod 3 \qquad \implies \qquad}B_{t}\neq A_t, \\ \label{even_3}
&\phantom{t\equiv 0 \mod 3 \qquad \implies \qquad}\rlap{$B_t \in N(A_t) \text{ only if }  B_{t}=E_{t+1}$,}\phantom{A_{t+1}\in N(B_t) \text{ only if }  A_{t+1}=E_{t+1}.}
\\[-1.5\baselineskip] \notag
&t\equiv 0 \mod 3 \qquad \implies \qquad %\\[-0.5\baselineskip] \notag
 \end{align}
 \end{subequations}
 \begin{subequations}
 \label{odd_induction}
\begin{align}
%&\phantom{t\geq0 \text{ is odd} \qquad \implies \qquad} \label{odd_1}
%\P\givenp{E_{t+1}=b'}{B_{[0,t]}} = \frac{1}{d}\one_{\{b'\in N(B_t)\}}, \\ 
\label{odd_2}
&\phantom{t\equiv 1 \mod 3 \qquad \implies \qquad}A_{t+1}\neq B_t, \\ \label{odd_3}
&\phantom{t\equiv 1 \mod 3 \qquad \implies \qquad}A_{t+1}\in N(B_t) \text{ only if }  A_{t+1}=E_{t+1}.
\\[-1.5\baselineskip] \notag
&t\equiv 1 \mod 3 \qquad \implies \qquad \\[0.3\baselineskip] \notag
 \end{align}
 \end{subequations}
 We will show that \eqref{even_induction} with $t = 3q$ allows us to define $A_{3q+1},A_{3q+2},B_{3q+1},E_{3q+2}$ such that \eqref{odd_induction} holds with $t = 3q+1$.
 In turn, \eqref{odd_induction} with $t=3q+1$ will lead to $B_{3q+2},B_{3q+3},A_{3q+3}$, $E_{3q+4}$ satisfying \eqref{even_induction} with $t = 3q+3$.
 In this way, we will be able to inductively define $A_t$ and $B_t$ for all $t\geq0$.
 
 \begin{remark}
The prospective algorithm just described only defines $E_t$ for values of $t$ which are not a multiple of $3$.
This shall not concern us, however, since we are ultimately interested in only the process $(A_t,B_t)_{t\geq0}$.
 \end{remark}
 
Given \eqref{even_induction}, suppose $t=3q$, and $A_t=a$, $B_t = b$, $E_{t+1}=e$.
%Consider as given a triple of vertices $a,b,e\in V$ such that $a\neq b$, $e \in N(a)$, and $e = b$ if $b \in N(a)$.
Let us fix enumerations $N(a)\setminus\{e\}=\{a_1,\dots,a_{d-1}\}$ and $N(b) = \{b_1,\dots,b_d\}$, as well as
\eq{
N(a_i) = \{a_i^1,\dots,a_i^d\},\ 1 \leq i \leq d-1, \qquad
N(b_j) = \{b_j^1,\dots,b_j^d\},\ 1 \leq j \leq d-1.
}
Using this notation, we recall the following definitions from Section~\ref{pair_neighbors}:
\eq{
\AA = \{(a_i,a_i^k) : 1 \leq i \leq d-1,\, 1\leq k \leq d\}, \qquad \BB = \{(b_j,b_j^\ell) : 1 \leq j\leq d,\, 1\leq \ell\leq d\}.
}
Consider the bipartite graph with vertex parts $(V_a,V_b)$ and edge set $\EE$ given as follows.
Let $V_a$ be the multiset in which every element of $\AA$ appears $d$ times, and let $V_b$ be the multiset in which every element of $\BB$ appears $d-1$ times.
Then we say $\EE$ contains edges between all instances of $(a_i,a_i^k)$ and $(b_j,b_j^\ell)$ whenever $(a_i,a_i^k)\vdash(b_j,b_j^\ell)$.

Let $\wt \AA_0$ be any sub-multiset of $V_a$.
Take $\wt\BB_0$ to be the sub-multiset of $V_b$ consisting of vertices adjacent to some element of $\wt\AA_0$.
If $\AA_0$ denotes the subset of $(a_i,a_i^k)\in\AA$ appearing at least once in $\wt\AA_0$, then $\wt\BB_0$ is precisely the multiset in which every element of $\cmp(\AA_0)$ appears $d-1$ times.
Clearly $|\AA_0|\geq |\wt\AA_0|/d$, and so Lemma~\ref{hall_prep_1}---which applies because of \eqref{even_induction}---gives
\eq{
|\wt\BB_0| = (d-1)|\cmp(\AA_0)| \geq d|\AA_0| \geq |\wt \AA_0|.
}
Therefore, by Hall's Marriage Theorem, $\EE$ contains a perfect matching between $V_a$ and $V_b$.
For each $i\in\{1,\dots,d-1\}$ and $j,k,\ell\in\{1,\dots,d\}$, let $m(i,j,k,\ell)$ denote the number of edges between $(a_i,a_i^k)$ and $(b_j,b_j^\ell)$ in this matching.
By definition of $V_a$ and $V_b$, we have
\eeq{ \label{total_sum}
\sum_{i=1}^{d-1}\sum_{j,k,\ell =1}^d m(i,j,k,\ell) &= |V_a| = |V_b| = d^2(d-1).
}
as well as
%\begin{subequations}
%\label{partial_sums}
\begin{align}
\sum_{j=1}^{d}\sum_{\ell=1}^d m(i,j,k,\ell) &= \rlap{$d$}\phantom{d-1} \quad \text{for every $i\in\{1,\dots,d-1\}$, $k\in\{1,\dots,d\}$},\label{partial_1} \\
\sum_{i=1}^{d-1}\sum_{k=1}^d m(i,j,k,\ell) &= d-1 \quad \text{for every $j,\ell\in\{1,\dots,d\}$.}\label{partial_2}
\end{align}
Trivial consequences of \eqref{partial_1} and \eqref{partial_2} are
\begin{align}
\sum_{j=1}^d \sum_{k=1}^{d} \sum_{\ell=1}^d m(i,j,k,\ell) &= \rlap{$d^2$}\phantom{d(d-1)} \quad \text{for every $i\in\{1,\dots,d-1\}$}, \label{partial_3} \\
 \sum_{i=1}^{d-1} \sum_{k=1}^d \sum_{\ell=1}^d m(i,j,k,\ell) &= d(d-1) \quad \text{for every $j\in\{1,\dots,d\}$}. \label{partial_4}
 \end{align}

%\end{subequations}

We can now couple $(A_{t+1},A_{t+2})$ and $(B_{t+1},E_{t+2})$ by first defining random variables $I\in\{1,\dots,d-1\}$ and $J,K,L\in\{1,\dots,d\}$, subject to
\eeq{ \label{couple_indices}
&\P\givenp{I=i,J=j,K=k,L=\ell}{A_{[0,t]},B_{[0,t]},E_{t+1}} \\
&= \P\givenp{I=i,J=j,K=k,L=\ell}{A_t=a,B_t=b,E_{t+1}=e} \\
&= \frac{m(i,j,k,\ell)}{d^2(d-1)}, \quad i\in\{1,\dots,d-1\},\, j,k,\ell \in \{1,\dots,d\}.
}
In light of \eqref{total_sum}, the above display prescribes a well-defined joint law for $(I,J,K,L)$.
In particular, we almost surely have $m(I,J,K,L) > 0$, which implies $(a_I,a_I^K) \vdash (b_J,b_J^L)$ by the definition of $\EE$.
We thus set
\eq{
(A_{t+1},A_{t+2}) &= (a_I,a_I^K), \qquad
(B_{t+1},E_{t+2}) = (b_J,b_J^L), %\quad \text{with independent probability $\frac{m(I,K;j,\ell)}{d-1}$, $1\leq j\leq \ell$}.
}
noting that $A_{t+1} = a_I \neq e = E_{t+1}$.
By design we have $(A_{t+1},A_{t+2})\vdash(B_{t+1},E_{t+2})$, which shows
\eeq{
\label{avoid_1}
B_{t+1} \neq A_{t+1},A_{t+2} \quad \text{for $t = 3q$},
}
as well as \eqref{odd_induction} with $t = 3q+1$.
Furthermore,
\eeq{
\label{avoid_2}
A_{t+1} \neq E_{t+1} \quad \stackref{even_3}{\implies} \quad B_t \neq A_{t+1}  \quad \text{for $t=3q$}.
}

%in the sense of Definition~\ref{compatible_def}.

If $t=3q+1$, then we follow the same procedure but with Alice and Bob exchanging roles.
That is, supposing $B_t = a$, $A_{t+1} = b$, and $E_{t+1}=e$, we set
\eq{
(B_{t+1},B_{t+2}) = (a_I,a_I^K), \qquad (A_{t+2},E_{t+3}) = (b_J,b_J^L),
}
noting that $B_{t+1}=a_I \neq e = E_{t+1}$.
In this case, $(B_{t+1},B_{t+2}) \vdash(A_{t+2},E_{t+3})$ implies
\eeq{
\label{avoid_3}
B_{t+1},B_{t+2} \neq A_{t+2} \quad \text{for $t=3q+1$},
}
as well as \eqref{even_induction} for $t=3q+3$.
Furthermore,
\eeq{
\label{avoid_4}
B_{t+1} \neq E_{t+1} \quad \stackref{odd_3}{\implies} \quad B_{t+1} \neq A_{t+1}  \quad \text{for $t=3q+1$}.
}

\subsection{Step 4: verification of necessary properties} \label{srw_verification}
We need to check that $(A_t,B_t)_{t\geq0}$ satisfies the two conditions of Definition~\ref{ac_def}.
The avoidance property is a straightforward consequence of our construction:
\eq{
t\equiv0\mod3 \quad &\implies \quad B_t \stackref{avoid_3}{\neq} A_t \quad \text{and} \quad B_t \stackref{avoid_2}{\neq} A_{t+1}, \\
t\equiv1\mod3 \quad &\implies \quad B_t \stackref{avoid_1}{\neq} A_t,A_{t+1}, %\quad \text{and} \quad B_t \stackref{avoid_1}{\neq} A_{t+1}, 
\\
t\equiv2\mod3 \quad &\implies \quad B_t \stackref{avoid_4}{\neq} A_t \quad \text{and} \quad B_t \stackref{avoid_3}{\neq} A_{t+1}. \\
}
To verify faithfulness, say in the case $t=3q$, we need to check the following identities:
\begin{subequations}
\label{faith_conditions}
\begin{align}
\P\givenp{A_{t+1}=a'}{A_{[0,t]}} &= \frac{1}{d}\one_{\{a'\in N(A_t)\}}, \label{faith_1} \\
\P\givenp{A_{t+2}=a''}{A_{[0,t+1]}} &= \frac{1}{d}\one_{\{a''\in N(A_{t+1})\}}, \label{faith_2} \\
\P\givenp{B_{t+1}=b'}{B_{[0,t]}} &= \frac{1}{d}\one_{\{b'\in N(B_t)\}}. \label{faith_3}
\end{align}
\end{subequations}
If $t=3q+1$, we would instead need to verify
\eq{
\P\givenp{B_{t+1}=b'}{B_{[0,t]}} &= \frac{1}{d}\one_{\{b'\in N(B_t)\}}, \\
\P\givenp{B_{t+2}=b''}{B_{[0,t+1]}} &= \frac{1}{d}\one_{\{b''\in N(B_{t+1})\}},  \\
\P\givenp{A_{t+2}=a'}{A_{[0,t+1]}} &= \frac{1}{d}\one_{\{a'\in N(A_{t+1})\}}.
}
Since the argument for this latter set of identities is analogous to the one for the former, we will just prove \eqref{faith_conditions}.
We need to establish, as an intermediate step, that the vertex $E_{t+1}$ to be avoided is uniform among the neighbors of whichever walker is next to move.
Moreover, this uniform distribution needs to be independent of that walker's history.

\begin{claim}
We have
\begin{subequations}
\begin{align}
t\equiv0\mod3 \quad &\implies \quad
\P\givenp{E_{t+1}=a'}{A_{[0,t]}} = \frac{1}{d}\one_{\{a'\in N(A_t)\}}, \label{even_1} \\
t\equiv1\mod3 \quad &\implies \quad
\P\givenp{E_{t+1}=b'}{B_{[0,t]}} = \frac{1}{d}\one_{\{b'\in N(B_t)\}}. \label{odd_1}
\end{align}
\end{subequations}
\end{claim}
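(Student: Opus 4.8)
The plan is to establish \eqref{even_1}, after which \eqref{odd_1} follows verbatim upon interchanging the roles of Alice and Bob. The guiding principle is that every excluded vertex is generated as the second coordinate of a matched pair $(b_J, b_J^L)$, so that the claim reduces to controlling the conditional law of the index $L$ given the index $J$.

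First I would locate where $E_{t+1}$ is created. For $t=0$ the vertex $E_1$ is uniform on $N(A_0)$ by construction, so \eqref{even_1} is immediate in this base case. For $t = 3q$ with $q\geq1$, the vertex $E_{t+1} = E_{3q+1}$ is instead produced at the earlier time $3q-2 = 3(q-1)+1$, where---the roles being reversed there---the construction sets $(A_{3q}, E_{3q+1}) = (b_J, b_J^L)$ with $(I,J,K,L)$ distributed as in \eqref{couple_indices}. In particular $A_{3q} = b_J$ and $E_{3q+1} = b_J^L \in N(b_J) = N(A_{3q})$, so the support of $E_{3q+1}$ is already correct and only its uniformity remains to be verified.

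The crux is a single computation. Conditioning on the state $(B_{3q-2}, A_{3q-1}, E_{3q-1})$ that drives the selection at time $3q-2$, the indices $(I,J,K,L)$ become independent of all prior history by \eqref{couple_indices}. Because Alice's history already records $A_{3q} = b_J$, conditioning on $A_{[0,3q]}$ together with this state pins down $J=j$, and the residual law of $L$ is
\eq{
\P\givenp{L=\ell}{J=j} = \frac{\sum_{i,k} m(i,j,k,\ell)}{\sum_{i,k,\ell'} m(i,j,k,\ell')} = \frac{d-1}{d(d-1)} = \frac1d,
}
where the numerator is evaluated using \eqref{partial_2} and the denominator using \eqref{partial_4}. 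Hence, conditionally on this state, $E_{3q+1} = b_j^L$ is uniform over $\{b_j^1,\dots,b_j^d\} = N(A_{3q})$.

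The step I expect to demand the most care is upgrading this conditional statement to the unconditional one, since the conditioning state involves $B_{3q-2}$ and $E_{3q-1}$, neither of which lies in Alice's history $A_{[0,3q]}$. I would resolve this via the tower property,
\eq{
\P\givenp{E_{3q+1}=a'}{A_{[0,3q]}} = \E\Big[\P\givenp{E_{3q+1}=a'}{A_{[0,3q]},B_{3q-2},E_{3q-1}} \,\Big|\, A_{[0,3q]}\Big],
}
observing that by the previous paragraph the inner probability equals $\frac1d\one_{\{a'\in N(A_{3q})\}}$ for every admissible value of $(B_{3q-2},E_{3q-1})$; as this no longer depends on the state, the outer expectation returns the same value and \eqref{even_1} follows. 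The one subtlety to flag is that distinct states induce distinct enumerations $b_j^1,\dots,b_j^d$ of $N(A_{3q})$, but uniformity concerns only the \emph{set} $N(A_{3q})$ and is insensitive to relabeling, so the averaging is legitimate. The proof of \eqref{odd_1} is identical, now reading $(B_{3q+1},E_{3q+2}) = (b_J,b_J^L)$ off the selection at time $3q$ and averaging over $A_{3q}$ and $E_{3q+1}$, which are absent from Bob's history.
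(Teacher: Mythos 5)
Your proof is correct and follows essentially the same route as the paper's: both arguments reduce the claim to the computation $\P(L=\ell\mid J=j)=\sum_{i,k}m(i,j,k,\ell)\big/\sum_{i,k,\ell'}m(i,j,k,\ell')=1/d$ via \eqref{partial_2} and \eqref{partial_4}, and then average out the unobserved part of the driving state (the other walker's position and the previous excluded vertex) by the tower property. The only cosmetic difference is that the paper phrases this as an alternating induction between \eqref{even_1} and \eqref{odd_1}, whereas you verify \eqref{even_1} directly at each $t=3q$ by tracing $E_{3q+1}$ back to the selection made at time $3q-2$; the substance is identical.
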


\begin{proof}
Recall that $E_1$ was chosen so that \eqref{even_1} is true with $t=0$.
So we may assume by induction that \eqref{even_1} holds with $t=3q$, and then seek to prove \eqref{odd_1} with $t=3q+1$. 
Taking $t=3q$ and using the notation from Section~\ref{formal_construction} (in particular, $B_t=b$) we have
\eq{ %\label{starting_conditional}
&\P\givenp{E_{t+2}=b_j^\ell}{B_{[0,t]},B_{t+1} = b_j} \\
&= \sum_{a\in V\setminus\{b\}}\sum_{e\in N(a)}\P\givenp{L=\ell}{B_{[0,t]},J=j,A_t=a,E_{t+1}=e} \P\givenp{A_t=a,E_{t+1}=e}{B_{[0,t]},J=j}.
}
Recall from \eqref{couple_indices} that $(I,J,K,L)$ is conditionally independent of $A_{[0,t]}$ and $B_{[0,t]}$ given $A_t=a$, $B_t=b$, and $E_{t+1}=e$.
Therefore,
\eq{
&\P\givenp{L=\ell}{B_{[0,t]},J=j,A_t=a,E_{t+1}=e}
= \P\givenp{L=\ell}{J=j,A_t=a,B_t=b,E_{t+1}=e} \\
&= \sum_{i=1}^{d-1}\sum_{k=1}^{d} \P\givenp{I=i,J=j,K=k,L=\ell}{A_t=a,B_t=b,E_{t+1}=e} \\
&\phantom{=}\times\bigg(\sum_{\ell'=1}^d\sum_{i=1}^{d-1}\sum_{k=1}^{d} \P\givenp{I=i,J=j,K=k,L=\ell'}{A_t=a,B_t=b,E_{t+1}=e}\bigg)^{-1}  \\
%&\stackrefp{partial_sums}{=}  
&=\sum_{i=1}^{d-1}\sum_{k=1}^{d} \frac{m(i,j,k,\ell)}{d^2(d-1)}\bigg(\sum_{i=1}^{d-1}\sum_{k=1}^d\sum_{\ell'=1}^d \frac{m(i,j,k,\ell')}{d^2(d-1)}\bigg)^{-1}  
\stackrel{\mbox{\footnotesize\eqref{partial_2},\eqref{partial_4}}}{=}\frac{d-1}{d(d-1)} = \frac{1}{d}.
}
Using this computation in the previous display, we arrive at
\eq{
\P\givenp{E_{t+2}=b_j^\ell}{B_{[0,t]},B_{t+1} = b_j} = \frac{1}{d}\sum_{a\in V\setminus\{b\}}\sum_{e\in N(a)}\P\givenp{A_t=a,E_{t+1}=e}{B_{[0,t]},J=j} = \frac{1}{d},
}
thus proving \eqref{odd_1}.
The argument that \eqref{odd_1} with $t=3q+1$ implies \eqref{even_1} with $t=3q+3$ is completely analogous.
\end{proof}

We can now establish \eqref{faith_conditions}.
Let us continue using the notation of Section~\ref{formal_construction}.
%We will again use the notation of Section~\ref{formal_construction}. %with $A_t=a$, $B_t=b$, and $E_{t+1}=e$.
First consider any $a'\in N(A_t)$, where $A_t=a$.
Because $I$ is conditionally independent of $A_{[0,t]}$ given $A_t,B_t,E_{t+1}$, we have
\eq{
&\P\givenp{A_{t+1}=a'}{A_{[0,t]}}
= \sum_{e\in N(a)\setminus\{a'\}}\sum_{b\in \{e\} \cup V\setminus N(a)}\Big[\P\givenp{B_t=b,E_{t+1}=e}{A_{[0,t]}}\\
&\phantom{\P\givenp{A_{t+1}=a'}{A_{[0,t]}}=}\times\sum_{i=1}^{d-1}\one_{\{a_i = a'\}}\P\givenp{I=i}{A_t=a,B_t=b,E_{t+1}=e}\Big] \\
&{\stackrel{{\mbox{\footnotesize\eqref{partial_3},\eqref{couple_indices}}}}{=}}
\frac{1}{d-1}\sum_{e\in N(a)\setminus\{a'\}}\sum_{b\in \{e\} \cup V\setminus N(a)}\P\givenp{B_t=b,E_{t+1}=e}{A_{[0,t]}} \\
&{\stackrel{\phantom{\mbox{\footnotesize\eqref{partial_3},\eqref{couple_indices}}}}{=}}\frac{1}{d-1}\sum_{e\in N(a)\setminus\{a'\}}\P\givenp{E_{t+1}=e}{A_{[0,t]}} 
\stackref{even_1}{=} \frac{1}{d-1}\sum_{e\in N(a)\setminus\{a'\}}\frac{1}{d} = \frac{1}{d}.
}
That is, \eqref{faith_1} holds.
We next verify \eqref{faith_2}.
For any $a''\in N(A_{t+1})$, where $A_{t+1}=a'$, we compute 
\eq{
&\P\givenp{A_{t+1}=a''}{A_{[0,t+1]}} \\
&\stackrel{\phantom{\mbox{\footnotesize\eqref{partial_1},\eqref{partial_3}}}}{=}\sum_{e\in N(a)\setminus\{a'\}}\sum_{b\in \{e\}\cup V\setminus N(a)}\Big[\P\givenp{B_t=b,E_{t+1}=e}{A_{[0,t+1]}} \\
&\phantom{\stackrel{\mbox{\footnotesize\eqref{partial_1},\eqref{partial_3}}}{=}}\times\sum_{i=1}^{d-1}\sum_{k=1}^d\one_{\{a_i=a',a_i^k=a''\}}\P\givenp{K=k}{A_t=a,B_t=b,E_{t+1}=e,I=i}\Big] \\
&\stackrel{\phantom{\mbox{\footnotesize\eqref{partial_1},\eqref{partial_3}}}}{=} \sum_{e\in N(a)\setminus\{a'\}}\sum_{b\in \{e\}\cup V\setminus N(a)}\bigg[\P\givenp{B_t=b,E_{t+1}=e}{A_{[0,t+1]}} \\
&\phantom{\stackrel{\mbox{\footnotesize\eqref{partial_1},\eqref{partial_3}}}{=}}\times\sum_{j,\ell=1}^{d}\frac{m(i,j,k,\ell)}{d^2(d-1)}\bigg(\sum_{k'=1}^{d}\sum_{j,\ell=1}^{d} \frac{m(i,j,k',\ell)}{d^2(d-1)}\bigg)^{-1}\bigg] \\
&\stackrel{\mbox{\footnotesize\eqref{partial_1},\eqref{partial_3}}}{=}\frac{1}{d}\sum_{e\in N(a)\setminus\{a'\}}\sum_{b\in \{e\}\cup V\setminus N(a)} \P\givenp{B_t=b,E_{t+1}=e}{A_{[0,t+1]}} = \frac{1}{d}.
}
Finally, for \eqref{faith_3}, consider any $b'\in N(B_t)$ where $B_t=b$.
Because $J$ is conditionally independent of $B_{[0,t]}$ given $A_t,B_t,E_{t+1}$, we have
\eq{
&\P\givenp{B_{t+1}=b'}{B_{[0,t]}} \\
&= \sum_{a\in V\setminus\{b\}}\sum_{e\in N(a)} \sum_{j=1}^d\one_{\{b'=b_j\}}\P\givenp{J = j}{A_t=a,B_t=b,E_{t+1}=e}
\P\givenp{A_t=a,E_{t+1}=e}{B_{[0,t]}} \\
&\stackrel{\mbox{\footnotesize\eqref{couple_indices}\eqref{partial_4}}}{=} \frac{1}{d}\sum_{a\in V\setminus\{b\}}\sum_{e\in N(a)}\P\givenp{A_t=a,E_{t+1}=e}{B_{[0,t]}} = \frac{1}{d}.
}
We have now have proved \eqref{faith_conditions}, thereby completing the construction.

\section{Proof of Theorem~\ref{square_free_thm}: square-free graphs} \label{square_free}

\setcounter{subsection}{-1}
\subsection{Outline of coupling} \label{construction_square_free}
Here the coupling strategy incorporates features from both Section~\ref{3_regular} and Section~\ref{d_regular}.
Specifically, we will describe in the square-free case certain local couplings satisfying conditions \hyperref[condition_1]{(i)}--\hyperref[condition_3]{(iii)} from Section~\ref{3_outline}.
To prove the existence of said couplings, we will again appeal to Hall's Marriage Theorem as in Section~\ref{d_regular}. 
Fortunately, each of these two tasks is more straightforward for square-free graphs than for regular graphs.
In the first, we can always take $T=1$; in the second, the relevant combinatorics are significantly simpler.

Throughout the remainder of Section~\ref{square_free}, we assume
\begin{itemize}
\item Alice is at vertex $A_t = a$ and is next to move;
\item Bob is at vertex $B_t = b$, which is neither equal to $a$ nor a neighbor of $a$.
\end{itemize}
Denoting $|N(a)| = k$ and $|N(b)| = \ell$, let us also assume $k\geq\ell$; because we will always take $T=1$, the reverse scenario can be handled in a completely symmetric manner.
Recall that Theorem~\ref{square_free_thm} assumes $k,\ell\geq3$.

Our goal is to specify $A_{t+1}$ and $B_{t+1}$ such that \hyperref[condition_1]{(i)}--\hyperref[condition_3]{(iii)} from Section~\ref{3_regular} are satisfied.
This is accomplished in three steps.
Section~\ref{graph_prelim} records some trivial properties of square-free graphs.
These properties are then used in Section~\ref{combin_prelim} to prove a combinatorial lemma needed to construct the desired coupling.
Finally, Section~\ref{construction} sees to fruition the actual construction.
Once this coupling is realized, Theorem~\ref{square_free_thm} will follow from Proposition~\ref{conditions_to_coupling}.

\subsection{Step 1: graph theoretic preliminaries} \label{graph_prelim}

The critical properties of a square-free graph are the following.

\begin{lemma} \label{square_free_lemma}
Let $G = (V,E)$ be a square-free graph, and suppose $a,b\in V$ satisfy $b\notin\{a\}\cup N(a)$.
Then the following statements hold:
\begin{enumerate}[label=\textup{(\alph*)}]
\item For any $a'\in N(a)$, $|N(a')\cap N(b)|\leq1$.
\item For any $b'\in N(a)$, $|N(b')\cap N(a)|\leq1$.
\item $|N(a)\cap N(b)| \leq 1$.
\end{enumerate}
\end{lemma}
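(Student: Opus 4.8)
The plan is to prove all three parts directly from the square-free hypothesis by exhibiting, in each case, a putative $4$-cycle whose existence would violate that hypothesis. Recall that $G$ being square-free means: for any four distinct vertices $v_1,v_2,v_3,v_4$, at least one of the edges $\{v_1,v_2\},\{v_2,v_3\},\{v_3,v_4\},\{v_4,v_1\}$ is absent. So the entire proof reduces to finding, under the assumption that one of the claimed bounds fails, four distinct vertices forming a genuine $4$-cycle.

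For part (a), suppose toward a contradiction that some $a'\in N(a)$ satisfies $|N(a')\cap N(b)|\geq 2$, say $w_1,w_2\in N(a')\cap N(b)$ with $w_1\neq w_2$. The natural candidate cycle is $a'\to w_1\to b\to w_2\to a'$, which uses the edges $\{a',w_1\},\{w_1,b\},\{b,w_2\},\{w_2,a'\}$, all present by construction. The only real work is checking that $a',w_1,b,w_2$ are four \emph{distinct} vertices: $w_1\neq w_2$ is assumed; $w_1,w_2\in N(b)$ forces $w_1,w_2\neq b$; and we need $a'\neq b,w_1,w_2$. Here $a'\neq b$ because $a'\in N(a)$ while $b\notin\{a\}\cup N(a)$ would be the wrong relation—more carefully, one should note $a'\in N(a)$ and argue $a'\notin N(b)$ is \emph{not} automatic, so instead I verify $a'\neq w_1,w_2$ using that $w_i\in N(b)$ and, if $a'$ coincided with some $w_i$, then $a'\in N(b)$; this does not immediately contradict anything, so the cleaner route is simply to observe that $a'\in N(a')$ is false (no loops), hence $a'\neq w_1,w_2$ since $w_i\in N(a')$. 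Likewise $a'\neq b$ follows once we confirm $b\notin N(a')$ is unnecessary—rather, $a'=b$ would put $b\in N(a)$, contradicting the hypothesis $b\notin\{a\}\cup N(a)$. With distinctness secured, the four edges give a square, contradicting square-freeness.

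Parts (b) and (c) follow the identical template. For (b), if some $b'$ has two common neighbors $u_1,u_2$ with $N(a)$, i.e. $u_1,u_2\in N(b')\cap N(a)$, the cycle is $b'\to u_1\to a\to u_2\to b'$; distinctness of $b',u_1,a,u_2$ is checked exactly as above, with $b'\neq a$ handled by whether $b'=a$ is excluded (note $b'\in N(a)$ here by the statement's hypothesis, so $b'\neq a$ is immediate, and $u_1\neq u_2$, $u_i\neq a,b'$ follow from the no-loops convention and the neighbor relations). For (c), if $|N(a)\cap N(b)|\geq 2$, pick distinct $z_1,z_2\in N(a)\cap N(b)$ and form $a\to z_1\to b\to z_2\to a$; the four vertices $a,z_1,b,z_2$ are distinct because $z_1\neq z_2$, $z_i\in N(a)$ gives $z_i\neq a$, $z_i\in N(b)$ gives $z_i\neq b$, and $a\neq b$ is part of the hypothesis $b\notin\{a\}\cup N(a)$.

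I expect no genuine obstacle here—these are precisely the ``trivial properties'' the text advertises. The only point demanding care is the \textbf{distinctness bookkeeping}: in each case one must confirm the four vertices are pairwise distinct before invoking square-freeness, since a degenerate ``cycle'' (where two of the four vertices coincide) would not contradict the definition. All such checks reduce to the absence of loops (so $v\notin N(v)$) and the standing hypothesis $b\notin\{a\}\cup N(a)$, so each is a one-line verification. The slightly subtle sub-case is ensuring $a'\neq b$ in part (a), which I would dispatch by noting that $a'=b$ together with $a'\in N(a)$ would give $b\in N(a)$, directly contradicting $b\notin\{a\}\cup N(a)$.
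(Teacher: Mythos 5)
Your proof is correct and takes essentially the same route as the paper: in each part one exhibits the putative $4$-cycle ($a'\to w_1\to b\to w_2\to a'$ for (a), $a\to z_1\to b\to z_2\to a$ for (c)) and derives a contradiction with square-freeness, the only difference being that the paper dispatches (b) by interchanging the roles of $a$ and $b$ while you argue it directly, and that you spell out the pairwise-distinctness checks the paper leaves implicit. Those checks are exactly right (loop-freeness plus $b\notin\{a\}\cup N(a)$), so nothing is missing.
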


\begin{proof}
The roles of $a$ and $b$ are interchangeable, and so (b) follows from (a).
For (a), observe that if there were distinct vertices $b_1,b_2\in N(a')\cap N(b)$, then $a' \to b_1\to b \to b_2 \to a'$ would be a square.
Similarly for (c), the existence of distinct $c_1,c_2\in N(a)\cap N(b)$ would lead to the square $a\to c_1\to b\to c_2\to a$.
\end{proof}

\subsection{Step 2: combinatorics of compatible moves} \label{combin_prelim}
Since $T=1$, conditions \hyperref[condition_2]{(ii)} and \hyperref[condition_3]{(iii)} are equivalent to $B_{t+1} \notin \{A_{t+1}\} \cup N(A_{t+1})$.
Therefore, to construct a suitable coupling between $A_{t+1}$ and $B_{t+1}$, it will be useful to make the following definition.  
Given a subset $\NN\subset N(a)$, define the set
\eq{
\cmp(\NN) \coloneqq \big\{b' \in N(b)\, :\, \exists\, a'\in \NN,\, b'\notin\{a'\}\cup N(a')\big\}.
}
The following combinatorial lemma will allow us to prove existence of the desired coupling.

\begin{lemma} \label{hall_prep_2}
For any $\NN\subset N(a)$, we have $|\cmp(\NN)| \geq (\ell/k)|\NN|$.
\end{lemma}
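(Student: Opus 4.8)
The plan is to recast the statement in terms of a \emph{domination} relation and reduce everything to an elementary count of ``bad'' vertices. For $a'\in N(a)$ and $b'\in N(b)$, say that $a'$ \emph{dominates} $b'$ if $b'\in\{a'\}\cup N(a')$. By the definition of $\cmp$, a vertex $b'\in N(b)$ fails to lie in $\cmp(\NN)$ precisely when \emph{every} $a'\in\NN$ dominates $b'$; call such a $b'$ \emph{bad} for $\NN$. Thus $|\cmp(\NN)|=\ell-(\text{number of bad vertices})$, and the lemma amounts to bounding the bad vertices from above. Lemma~\ref{square_free_lemma} supplies the two inputs I need:
\begin{enumerate}[label=\textup{(\roman*)}]
\item for each fixed $a'\in N(a)$, at most two vertices of $N(b)$ are dominated by $a'$ --- one possibly via the equality $b'=a'$ (forcing $a'\in N(b)$), and at most one via adjacency, since $|N(a')\cap N(b)|\leq 1$ by part~(a);
\item for each fixed $b'\in N(b)$, at most two vertices of $N(a)$ dominate $b'$ --- one via equality and at most one via adjacency, using $|N(b')\cap N(a)|\leq 1$ from part~(b).
\end{enumerate}

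Writing $m\coloneqq|\NN|$, I would dispatch the easy regimes at once. If $m\geq 3$, then by (ii) no $b'$ can be dominated by all of $\NN$, so there are no bad vertices, $\cmp(\NN)=N(b)$, and $|\cmp(\NN)|=\ell\geq(\ell/k)m$ because $m\leq k$. If $m=1$, then (i) gives at most two bad vertices, whence $|\cmp(\NN)|\geq\ell-2\geq 1\geq\ell/k$, the middle inequality using $\ell\geq 3$ and the last using $k\geq\ell$. The case $m=0$ is vacuous.

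The only delicate case is $m=2$, which is where I expect the real work to lie. Writing $\NN=\{a_1',a_2'\}$ with $a_1'\neq a_2'$, I claim there is \emph{at most one} bad vertex. Suppose instead that $b_1',b_2'\in N(b)$ are distinct and both dominated by each of $a_1',a_2'$. Applying the structure in (i) to $a_1'$: the two distinct vertices $b_1',b_2'$ cannot both be dominated via adjacency (that would violate $|N(a_1')\cap N(b)|\leq 1$), so one of them must equal $a_1'$; in particular $a_1'\in\{b_1',b_2'\}\subseteq N(b)$. The same reasoning with $a_2'$ gives $a_2'\in\{b_1',b_2'\}\subseteq N(b)$. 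Since $a_1'\neq a_2'$ and $\{b_1',b_2'\}$ has exactly two elements, this forces $\{b_1',b_2'\}=\{a_1',a_2'\}$, exhibiting two distinct common neighbors of $a$ and $b$ --- a direct contradiction to Lemma~\ref{square_free_lemma}(c). Hence at most one bad vertex exists, so $|\cmp(\NN)|\geq\ell-1$.

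It remains only to confirm the arithmetic $\ell-1\geq 2\ell/k$ in this case, i.e. $\ell(k-2)\geq k$; since $k\geq 3$ one has $k/(k-2)\leq 3\leq\ell$, so the inequality holds. Collecting the four regimes establishes $|\cmp(\NN)|\geq(\ell/k)|\NN|$ for every $\NN\subseteq N(a)$. The crux is thus entirely the $m=2$ argument, where square-freeness (through part~(c)) rules out a second simultaneously dominated neighbor; the remaining cases follow immediately from the elementary domination counts (i)--(ii).
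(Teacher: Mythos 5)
Your proof is correct, and it takes a genuinely different route from the paper's. The paper proves the lemma by a structural case analysis on $N(a)\cap N(b)$ (empty, or a single common neighbor $c$, further split into four sub-cases according to whether $c$ has a neighbor in $N(a)$ and/or $N(b)$), in each case constructing a greedy enumeration that aligns $a_i$ with $b_j$ so that domination occurs only near the ``diagonal,'' and then reading off the bounds for $|\NN|=1$, $|\NN|=2$, and $|\NN|\geq 3$. You instead extract the two degree bounds directly --- each $a'$ dominates at most two vertices of $N(b)$ and each $b'$ has at most two dominators in $N(a)$, both immediate from Lemma~\ref{square_free_lemma}(a,b) --- which dispatches $|\NN|=1$ and $|\NN|\geq 3$ at once, and then you isolate the only delicate case $|\NN|=2$ with a short argument showing that two simultaneously dominated vertices would force two distinct common neighbors of $a$ and $b$, contradicting Lemma~\ref{square_free_lemma}(c). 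The inputs are identical (parts (a), (b), (c) of the square-free lemma together with $k\geq\ell\geq 3$), but your organization replaces the paper's five enumeration sub-cases with a single double-counting observation plus one targeted contradiction; this is shorter and makes the role of square-freeness more transparent, at the cost of not producing the explicit near-diagonal matching structure that the paper's enumeration exhibits (which is not needed anywhere else, since the construction in Section~\ref{construction} only invokes the cardinality bound via Hall's theorem). All the arithmetic checks out, including $\ell-1\geq 2\ell/k$ from $\ell(k-2)\geq k$ with $k\geq\ell\geq3$.
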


\begin{proof}
We separately consider two possibilities. \\

\hypertarget{square_case_1}{{\bf Case 1: $N(a)\cap N(b)$ is empty.}}
Let us enumerate the elements of $N(a)$ and $N(b)$ in the following greedy way.
First choose $a_1\in N(a)$ and $b_1\in N(b)$ to be adjacent if possible.
Then repeat, selecting $a_2\in N(a)\setminus\{a_1\}$ and $b_2\in N(b)\setminus\{b_1\}$ to be adjacent if possible.
Continue until there are no further adjacencies between the remaining elements of $N(a)$ and $N(b)$, at which point these remaining elements can be labeled arbitrarily.
In any circumstance, Lemma~\ref{square_free_lemma}(a,b) implies $b_j \in \{a_i\} \cup N(a_i)$ only if $j = i$.
Hence
\eeq{ \label{two_implications}
|\NN| = 1 \quad &\implies \quad |\cmp(\NN)| \geq \ell-1 \quad \implies \quad  |\cmp(\NN)| \geq 2 > |\NN| \geq (\ell/k)|\NN|, \\
|\NN| \geq 2 \quad &\implies \quad |\cmp(\NN)| = \rlap{$\ell$}\phantom{\ell-1} \quad \implies \quad  |\cmp(\NN)|= (\ell/k)|N(a)| \geq (\ell/k)|\NN|.
}
\phantom{text}
%In particular, the claimed inequality holds. \\

{\bf Case 2: $N(a)\cap N(b)$ is nonempty.}
In this scenario, Lemma~\ref{square_free_lemma}(c) allows only $|N(a)\cap N(b)| = 1$.
Let us denote the single element of $N(a)\cap N(b)$ by $c=a_1=b_1$. \\

{\it Case 2a: $c$ is adjacent to neither $N(a)$ nor $N(b)$.}
Here we are assuming $N(a)\cap N(c) = N(b)\cap N(c) = \varnothing$.
We then enumerate the remaining elements of 
\eq{
N(a)\setminus\{c\} = \{a_2,\dots,a_k\} \quad \text{and} \quad N(b)\setminus\{c\} = \{b_2,\dots,b_\ell\}
} 
as in \hyperlink{square_case_1}{Case 1}.
That is, for $2\leq i \leq k$ and $2\leq j\leq\ell$, we have $b_j \in \{a_i\} \cup N(a_i)$ only if $j = i$.
Notice this statement also holds if $i$ or $j$ is equal to $1$, thanks to our assumption $N(a)\cap N(c) = N(b)\cap N(c) = \varnothing$.
Hence the implications in \eqref{two_implications} remain true, and so the lemma's claim remains valid. \\

\hypertarget{square_case_2b}{{\it Case 2b: $c$ is adjacent to both $N(a)$ and $N(b)$.}}
Now we are assuming $N(a)\cap N(c) = \{a_2\}$ and $N(b)\cap N(c) = \{b_2\}$, where $a_2$ and $b_2$ are necessarily distinct by Lemma~\ref{square_free_lemma}(c).
Moreover, $N(a_2) \cap N(b) = \{c\}$ by Lemma~\ref{square_free_lemma}(a), and $N(b_2) \cap N(a) = \{c\}$ by Lemma~\ref{square_free_lemma}(b).
We can now enumerate the remaining elements of $N(a)$ and $N(b)$ as in \hyperlink{square_case_1}{Case 1}, so that
\eq{
a_3 \in N(b_j) \quad &\implies \quad j = 3, \\
&\hspace{2.6ex}\vdots \\
a_k \in N(b_j) \quad &\implies \quad j = k.
%b_1 \in \{a_i\} \cup N(a_i)\quad &\implies \quad i \in \{1,2\}, \\
%b_2 \in \{a_i\} \cup N(a_i)\quad &\implies \quad i = 1, \\
%b_j \in \{a_i\} \cup N(a_i),\, j \geq 3 \quad &\implies \quad i = j.
}
Together, these observations imply
\eq{ 
c=a_1 \in \NN \quad &\implies \quad b_j \in \cmp(\NN) \text{ for every $j \neq 1,2$}, \\
a_2 \in \NN \quad &\implies \quad b_j \in \cmp(\NN) \text{ for every $j \neq 1$}, \\
a_i \in \NN,\, i\geq3 \quad &\implies \quad b_j \in \cmp(\NN) \text{ for every $j \neq i$}.
}
A slightly weaker form of \eqref{two_implications}, but still satisfying the claim, readily follows:
\eeq{ \label{three_cases}
|\NN| = 1 \quad &\implies \quad |\cmp(\NN)| \geq  \ell-2 \quad \implies \quad |\cmp(\NN)| \geq 1 = |\NN| \geq (\ell/k)|\NN|, \\
|\NN| = 2 \quad &\implies \quad |\cmp(\NN)| \geq \rlap{$\ell-1$}\phantom{\ell-2} \quad \implies \quad |\cmp(\NN)| \geq 2 = |\NN| \geq (\ell/k)|\NN|,\\
|\NN| \geq 3 \quad &\implies \quad |\cmp(\NN)| = \rlap{$\ell$}\phantom{\ell-2} \quad \implies \quad |\cmp(\NN)| = (\ell/k)|N(a)| \geq (\ell/k)|\NN|.
}
\phantom{text}
%Since $k\geq\ell\geq3$, the claim still holds. \\

{\it Case 2c: $c$ is adjacent to only $N(a)$.}
Here $N(a) \cap N(c) = \{a_2\}$ but $N(b)\cap N(c) = \varnothing$.
As in \hyperlink{square_case_2b}{Case 2b}, the first equality implies $N(a_2) \cap N(b) = \{c\}$.
We next enumerate the remaining elements of $N(a)$ and $N(b)$ in a greedy fashion similar to before, so that
\eq{
a_3 \in N(b_j) \quad &\implies \quad j = 2, \\
&\hspace{2.6ex}\vdots \\
a_k \in N(b_j) \quad &\implies \quad j = k-1.
}
Notice that if $j = k$, then $b_j$ is adjacent to no element of $N(a)$.
We now have 
\eq{
c=a_1 \in \NN \quad &\implies \quad b_j \in \cmp(\NN) \text{ for every $j \neq 1$}, \\
a_2 \in \NN \quad &\implies \quad b_j \in \cmp(\NN) \text{ for every $j \neq 1$}, \\
a_i \in \NN,\, i\geq3 \quad &\implies \quad b_j \in \cmp(\NN) \text{ for every $j \neq i-1$}.
}
It is easy to check that \eqref{three_cases} remains true. \\

{\it Case 2d: $c$ is adjacent to only $N(b)$.}
In this final case, we have $N(a) \cap N(c) = \varnothing$ and $N(b)\cap N(c) = \{b_2\}$.
The greedy enumeration now leads to
\eq{
a_2 \in N(b_j) \quad &\implies \quad j = 3, \\
&\hspace{2.6ex}\vdots \\
a_{k-1} \in N(b_j) \quad &\implies \quad j = k.
}
Given that $\ell\leq k$, we are left to conclude $a_{k} \notin N(b_j)$ for every $j=1,\dots,\ell$.
Hence
\eq{
c=a_1 \in \NN \quad &\implies \quad b_j \in \cmp(\NN) \text{ for every $j \neq 1,2$}, \\
a_i \in \NN,\, i\geq2 \quad &\implies \quad b_j \in \cmp(\NN) \text{ for every $j \neq i+1$},
}
from which one deduces
\eq{
|\NN| = 1 \quad &\implies \quad |\cmp(\NN)| \geq \ell-2 \quad \implies \quad |\cmp(\NN)| \geq 1 = |\NN| \geq (\ell/k)|\NN|, \\
|\NN| \geq 2 \quad &\implies \quad |\cmp(\NN)| \geq \rlap{$\ell$}\phantom{\ell-2} \quad \implies \quad |\cmp(\NN)| = (\ell/k)|N(a)| \geq (\ell/k)|\NN|.
}
Now all cases have been handled, and so the proof is complete.
\end{proof}

\subsection{Step 3: construction of the coupling} \label{construction}
Recall that $k = |N(a)| \geq |N(b)| = \ell$.
Let us fix enumerations $N(a) = \{a_1,\dots,a_k\}$ and $N(b) = \{b_1,\dots,b_\ell\}$.
Consider the bipartite graph with vertex parts $(V_a,V_b)$ and edge set $\EE$ given as follows.
Let $V_a$ be the multiset in which every element of $N(a)$ appears $\ell$ times, and let $V_b$ be the multiset in which every element of $N(b)$ appears $k$ times.
We suppose that $\EE$ contains edges between all instances of $a_i$ and $b_j$ whenever $b_j \notin \{a_i\} \cup N(a_i)$.

Now let $\wt \NN_a$ be any sub-multiset of $V_a$.
Let $\wt\NN_b$ be the sub-multiset of $V_b$ consisting of vertices adjacent to some element of $\wt\NN_a$.
If $\NN_a$ denotes the subset of $a_i\in N(a)$ appearing at least once in $\wt\NN_a$, then $\wt\NN_b$ is the multiset in which every element of $\cmp(\NN_a)$ appears $k$ times.
Clearly $|\NN_a|\geq|\wt\NN_a|/\ell$, and so Lemma~\ref{hall_prep_2} gives
\eq{
|\wt\NN_b| = k|\cmp(\NN_a)| \geq \ell|\NN_a| \geq |\wt \NN_a|.
}
Therefore, by Hall's Marriage theorem, $\EE$ contains a perfect matching between $V_a$ and $V_b$.
For $i=1,\dots,k$ and $j=1,\dots,\ell$, let $m(i,j)$ denote the number of edges between $a_i$ and $b_j$ in this matching.
By definition of $V_a$ and $V_b$, we have
%\begin{subequations}
\begin{align}
\label{uniform_1}
\sum_{j=1}^\ell m(i,j) &= \ell \quad \text{for every $i=1,\dots,k$}, \\
\label{uniform_2}
\sum_{i=1}^k m(i,j) &= k \quad \text{for every $j=1,\dots,\ell$}.
\end{align}
%\end{subequations}

We can now couple $A_{t+1}$ and $B_{t+1}$ as follows.
Let $I$ be a uniformly random element of $\{1,\dots,k\}$, independent of $A_{[0,t]}$ and $B_{[0,t]}$, and then sample a random $J\in \{1,\dots,\ell\}$ according to
\eeq{ \label{J_def}
\P\givenp{J = j}{A_{[0,t]},B_{[0,t]},I} %= \P\givenp{J = j}{I} 
= \P\givenp{J = j}{A_t=a,B_t=b,I} 
= \frac{m(I,j)}{\ell}, \quad 1 \leq j\leq\ell.
}
Because of \eqref{uniform_1}, the above display prescribes a well-defined law for $J$.
Now, given the variables $I$ and $J$, we set
\eq{
A_{t+1} = a_I, \qquad B_{t+1} = b_J. %\quad \text{with independent probability $\frac{m(I,j)}{\ell}$, $1\leq j\leq \ell$}
}

\subsection{Step 4: verification of necessary properties}
Clearly $A_{t+1}$ is a uniformly random element of $N(a)$; moreover, $I$ is independent of $A_{[0,t]}$ so that \eqref{srw_condition_a} is satisfied.
Meanwhile, we claim $B_{t+1}$ is a uniformly random element of $N(B_t)$.
Indeed, observe from \eqref{J_def} that $J$ is conditionally independent of $A_{[0,t]},B_{[0,t]}$ given $A_t=a$, $B_t=b$, and $I$.
Furthermore, $I$ is completely independent of $A_{[0,t]},B_{[0,t]}$.
Consequently,
\eq{
&\P\givenp{B_{t+1}=b_j}{B_{[0,t]}} \\
&\stackrefp{uniform_2}{=}  \sum_{a\in V\setminus\{b\}}\sum_{i=1}^k\P\givenp{J=j}{A_t=a,B_t=b,I=i}\cdot\P(I=i)\cdot\P\givenp{A_t=a}{B_{[0,t]}} \\
&\stackrefp{uniform_2}{=}  \sum_{a\in V\setminus\{b\}}\sum_{i=1}^k \frac{m(i,j)}{\ell}\cdot\frac{1}{k}\cdot\P\givenp{A_t=a}{B_{[0,t]}} \\
&\stackref{uniform_2}{=} \frac{1}{\ell}\sum_{a\in V\setminus\{b\}}\P\givenp{A_t=a}{B_{[0,t]}} = \frac{1}{\ell}.
}
We have thus shown that \eqref{srw_condition_b} also holds, thereby verifying condition \hyperref[condition_1]{(i)}.
Meanwhile, conditions \hyperref[condition_2]{(ii)} and \hyperref[condition_3]{(iii)} follow by induction from the following chain of implications:
\eq{
\P\givenp{A_{t+1}=a_i,B_{t+1}=b_j}{A_t=a,B_t=b} > 0  &\iff 
m(i,j) > 0 \\ 
&\hspace{0.5ex}\implies  \{a_i,b_j\}\in\EE  \implies  b_{j} \notin \{a_i\}\cup N(a_i).
}
This completes the construction.

%: ACKNOWLEDGMENTS
\section{Acknowledgments}
We are grateful to Omer Angel for suggesting the problem of constructing an avoidance coupling on regular graphs.
We thank the referees for their useful suggestions and edits.
This work was conducted in part during a visit of E.B. to the Research Institute of Mathematical Sciences at NYU Shanghai; E.B. thanks the Institute for its wonderful hospitality. 
E.B. was partially supported by NSF grant DMS-1902734.

\appendix

\section{Proof of Corollary~\ref{main_cor}} \label{appendix}
Let $\G(n,d)$ denote a uniformly random $d$-regular graph on $n$ vertices, if such a graph exists.
Let $\G_\cc(n,d)$ denote a uniformly random \textit{connected} $d$-regular graph on $n$ vertices, again if such a graph exists.
To prove Corollary~\ref{main_cor} given Theorem~\ref{main_thm}, we wish to show that
\eeq{ \label{cor_WTS}
\lim_{n\to\infty} \P\big(\wt H_3 \in \G_\cc(n,3)\big) = 0, \qquad \lim_{n\to\infty} \P\big(H_d\in \G_\cc(n,d)\big) = 0, \quad d\geq4. 
}
First we recall from \cite[Section 7.6]{bollobas01} that 
\eq{
\lim_{n\to\infty}\P(\G(n,d) \text{ is connected}) = 1 \quad \text{for all $d\geq3$}.
}
(In fact, $\G(n,d)$ is asymptotically $d$-connected.)
Consequently, for any sequence of events $\{E_n\}_{n\geq1}$ (\textit{i.e.}~$E_n$ is a subset of graphs on $n$ vertices), we have
\eq{
\lim_{n\to\infty}\P(\G(n,d) \in E_n) = 0 \quad \implies \quad \lim_{n\to\infty}\P(\G_\cc(n,d)\in E_n) = 0.
}
Therefore, to conclude \eqref{cor_WTS}, it suffices to show the same statements with $\G(n,d)$ replacing $\G_\cc(n,d)$:
\eeq{ \label{cor_WTS_2}
\lim_{n\to\infty} \P\big(\wt H_3 \in \G(n,3)\big) = 0, \qquad \lim_{n\to\infty} \P\big(H_d\in \G(n,d)\big) = 0, \quad d\geq4. 
}

To prove \eqref{cor_WTS_2}, we pass to a third random graph model.
When $nd$ is even, let $\G^*(n,d)$ denote the $d$-regular configuration model on $n$ vertices.
That is, choose a uniformly random partition $\PP(n,d)$ of the set $\{1,\dots,n\}\times\{1,\dots,d\}$ into $nd/2$ pairs; for each pair $\{(i,k),(j,\ell)\}$ in said partition, we include an edge between $i$ and $j$.
This forms a random multigraph (possibly with loops) on the vertex set $\{1,\dots,n\}$, which we denote $\G^*(n,d)$.
It is well-known, \textit{e.g.}~\cite[Theorem 9.9]{janson-luczak-rucinski00}, that for any sequence of events $\{E_n\}_{n\geq1}$ (now $E_n$ is a subset of \textit{multi}graphs on $n$ vertices), we have
\eeq{ \label{cor_have}
\lim_{n\to\infty} \P(\G^*(n,d)\in E_n) = 0 \quad \implies \quad \lim_{n\to\infty} \P(\G(n,d)\in E_n) = 0.
}
(This is because $\G^*(n,d)$ conditioned to be a simple graph is equal in law to $\G(n,d)$, and this conditioning event occurs with non-vanishing probability.)
Now \eqref{cor_WTS_2} is implied by \eqref{cor_have} together with the following lemma.

\begin{lemma}
If $H$ is a (simple) graph with $n_0$ vertices and $m>n_0$ edges, then
\eq{
\lim_{n\to\infty}\P\big(H\subset\G^*(n,d)\big) = 0.
}
\end{lemma}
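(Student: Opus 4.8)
The plan is to run a first moment calculation in the configuration model. Let $X$ denote the number of subgraph copies of $H$ in $\G^*(n,d)$, that is, the number of injective maps $\phi\colon V(H)\to\{1,\dots,n\}$ such that $\{\phi(u),\phi(v)\}$ is an edge of $\G^*(n,d)$ whenever $\{u,v\}$ is an edge of $H$. Since $\{H\subset\G^*(n,d)\}=\{X\geq1\}$, Markov's inequality yields $\P(H\subset\G^*(n,d))\leq\E[X]$, so it suffices to prove $\E[X]\to0$. I would compute $\E[X]$ by linearity, summing over all candidate embeddings $\phi$ the probability that $\phi$ is realized by the random matching.

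Next I would estimate the contribution of a single embedding. Recall that $\G^*(n,d)$ arises from a uniformly random perfect matching $\PP(n,d)$ of the $nd$ half-edges (stubs), $d$ attached to each vertex. For $H$ to embed via a fixed $\phi$, each of the $m$ edges $\{u,v\}$ of $H$ must be realized by pairing a stub at $\phi(u)$ with a stub at $\phi(v)$, so realizing all of $H$ requires $m$ disjoint matched stub-pairs. I would bound the three ingredients separately: there are at most $n^{n_0}$ choices of the injection $\phi$; for each $\phi$ there are at most $\prod_{v\in V(H)}d^{\deg_H(v)}=d^{2m}$ ways to designate which stubs at the images carry the edges of $H$ (a quantity that depends only on the fixed data $d$ and $H$); and for any fixed set of $m$ disjoint stub-pairs, the probability that all of them appear in the random matching is
\eq{
\prod_{i=0}^{m-1}\frac{1}{nd-2i-1}=\frac{(nd-2m-1)!!}{(nd-1)!!}=\Theta(n^{-m}),
}
each factor being $\Theta(n^{-1})$ since $d$ is fixed. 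Multiplying the three bounds gives
\eq{
\E[X]\leq n^{n_0}\cdot d^{2m}\cdot\Theta(n^{-m})=O\big(n^{n_0-m}\big).
}

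Finally, the hypothesis $m>n_0$ forces $n_0-m\leq-1$, so $\E[X]=O(n^{-1})\to0$, completing the argument. I do not anticipate a genuine obstacle: the only care required is bookkeeping. Specifically, one must confirm that the stub-assignment factor $d^{2m}$ is a true constant in $n$ (it is, as $n_0$, $m$, $d$ are all fixed), and that the appearance probability of a prescribed collection of $m$ disjoint pairs is uniformly $\Theta(n^{-m})$; the double-factorial identity above makes this transparent by counting matchings of the remaining $nd-2m$ stubs directly. Loops and multi-edges in $\G^*(n,d)$ cause no trouble, since a subgraph copy of the simple graph $H$ only demands that its $m$ simple edges be present among distinct vertices, which is exactly what the computation above bounds.
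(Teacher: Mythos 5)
Your proposal is correct and follows essentially the same route as the paper: both are first-moment/union-bound arguments that bound the probability by $n^{n_0}$ times a product of $m$ factors each of order $d^2/n$, and conclude from $m>n_0$. The only cosmetic difference is that the paper bounds the edge probabilities sequentially by conditioning on previously placed edges, whereas you enumerate stub designations and invoke the exact double-factorial formula for the probability that a prescribed set of $m$ disjoint stub-pairs lies in the random matching.
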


\begin{proof}
Let us write the vertex set of $\G^*(n,d)$ as $\{1,\dots,n\}$, where we assume $n>2m$.
For $1\leq i<j\leq n$, $\{i,j\}$ is an edge in $\G^*(n,d)$ precisely when $\PP(n,d)$ contains a pair of the form $\{(i,k),(j,\ell)\}$, where $1\leq k,\ell\leq d$.
Since each of the ${nd \choose 2}$ possible pairs is equally likely to belong to the partition, and the partition contains $nd/2$ pairs, we have
\eq{
\P\big(\{i,j\}\in E(\G^*(n,d))\big) \leq \sum_{k,\ell=1}^d \P\big(\{(i,k),(j,\ell)\}\in\PP(n,d)\big) = \frac{d^2}{{nd \choose 2}}\frac{nd}{2} = \frac{d^2}{nd-1}.
}
In fact, we can generalize this computation as follows.

We know any $(i,k)$ must be matched with some $(j,\ell)$.
If $\{i_1,j_1\},\dots,\{i_q,j_q\}$ are already known to be edges in $\G^*(n,d)$, %where $i\notin\{i_1,\dots,i_q\}$, 
then there are at least $(n-2q)d$ elements of $\{1,\dots,n\}\times\{1,\dots,d\}$ remaining unmatched. %(and therefore potential companions to $(i,k)$ in $\PP(n,d)$).
Consequently, for any $i \neq j$ such that $\{i,j\} \neq \{i_p,j_p\}$ for $p=1,\dots,q$, we have
\eq{
&\P\givenp[\big]{\{i,j\}\in E(\G^*(n,d))}{\{i_1,j_1\},\dots,\{i_q,j_q\}\in E(\G^*(n,d))} \\
&\leq \sum_{k,\ell=1}^d \P\givenp[\big]{\{(i,k),(j,\ell)\}\in\PP(n,d)}{\{i_1,j_1\},\dots,\{i_q,j_q\}\in E(\G^*(n,d))} 
\leq \frac{d^2}{(n-2q)d-1}.
}
It follows that
\eq{
\P\big(H\subset\G^*(n,d)\big) \leq {n \choose n_0}\prod_{q=0}^{m-1}\frac{d^2}{(n-2q)d-1} \leq n^{n_0}\Big(\frac{d}{n-2m}\Big)^m.
}
As $m>n_0$, the above quantity vanishes as $n\to\infty$.
\end{proof}

%: BIBLIOGRAPHY
%\nocite{*} Uncomment to include all entries in the bibliography.
\bibliography{avoidance_couplings}

\end{document}